\newcommand{\thickhline}{%
    \noalign {\ifnum 0=`}\fi \hrule height 1.25pt
    \futurelet \reserved@a \@xhline
}
\newcolumntype{"}{@{\hskip\tabcolsep\vrule width 1.25pt\hskip\tabcolsep}}
\numberwithin{equation}{section}
\theoremstyle{definition}
\newtheorem{theorem}{Theorem}[section]
\newtheorem{definition}[theorem]{Definition}
\newtheorem{conjecture}[theorem]{Conjecture}
\newtheorem{lemma}[theorem]{Lemma}
\newtheorem{proposition}[theorem]{Proposition}
\newtheorem{corollary}[theorem]{Corollary}
\newtheorem*{theorem*}{Theorem}
\theoremstyle{remark}
\newtheorem{remark}[theorem]{Remark}
\newtheorem{example}[theorem]{Example}
\newcommand{\supp}{\text{supp}}
\newcommand{\sfn}{\mathsf{n}}
\newcommand{\sfe}{\mathsf{e}}
\DeclareMathOperator{\Newton}{Newton}
\DeclareMathOperator{\gr}{gr}
\DeclareMathOperator{\conv}{conv}
\DeclareMathOperator{\perm}{perm}
\DeclareMathOperator{\wt}{wt}
\newcommand{\R}{\mathbb{R}}
\title[The Newton polytope and Lorentzian property of chromatic symmetric functions]{The Newton polytope and Lorentzian property \\ of chromatic symmetric functions}
\author{Jacob P. Matherne}
\address{Department of Mathematics, North Carolina State University, Raleigh, NC., USA}
\email{jpmather@ncsu.edu}
\author{Alejandro H. Morales}
\address{LACIM, D\'epartement de Math\'ematiques, Universit\'e du Qu\'ebec \`a Montr\'eal, Montr\'eal, QC, Canada}
\email{morales\_borrero.alejandro@uqam.ca}
\author{Jesse Selover}
\address{Department of Mathematics and Statistics, University of Massachusetts, Amherst, MA., USA}
\email{jselover@math.umass.edu}
\begin{document}

\begin{abstract}
Chromatic symmetric functions are well-studied symmetric functions in algebraic combinatorics that generalize the chromatic polynomial and are related to Hessenberg varieties and diagonal harmonics.  Motivated by the Stanley--Stembridge conjecture, we show that the allowable coloring weights for indifference graphs of Dyck paths are the lattice points of a permutahedron $\mathcal{P}_\lambda$, and we give a formula for the dominant weight $\lambda$.  Furthermore, we conjecture that such chromatic symmetric functions are Lorentzian, a property introduced by Br\"and\'en and Huh as a bridge between discrete convex analysis and concavity properties in combinatorics, and we prove this conjecture for abelian Dyck paths.  We extend our results on the Newton polytope to incomparability graphs of (3+1)-free posets, and we give a number of conjectures and results stemming from our work, including results on the complexity of computing the coefficients and relations with the $\zeta$ map from diagonal harmonics.
\end{abstract}

\maketitle

\section{Introduction} \label{sec:intro}


\subsection{Motivation}

The study of proper colorings of
a graph $G$ is a fundamental topic in graph
theory and theoretical computer science. For a fixed graph $G$, the number of proper colorings of $G$ with $n$
colors is given by the chromatic polynomial $\chi_G(n)$, which was first introduced by Birkhoff in 1912 for planar graphs \cite{b12} and then for all graphs in 1932 by Whitney \cite{w32}. In 1995, Stanley defined the following symmetric function
generalization of $\chi_G(n)$: for a graph $G=(V,E)$ let 
\[
X_G({\bf x}) \coloneq \sum_{\overset{f\colon V(G) \to \mathbb{N}}{f \text{ proper}}} x_{f(1)} x_{f(2)}\cdots,
\] 
where the sum is over all proper
colorings $f$ of $G$ (i.e., colorings satisfying $f(i)\neq f(j)$ if $(i,j)$ is an
edge of $G$) \cite{StChrom1}. This symmetric function has connections to
 combinatorial Hopf algebras \cite{ABS}, topology \cite{SY,CSSY}, statistical mechanics \cite{McMo}, representation theory \cite{HaPre,CM,FennSommers}, and  algebraic
geometry \cite{BrCh}.

There are fundamental open questions about $X_G({\bf x})$. For
instance, Stanley conjectured in \cite{StChrom1,StChrom2} that $X_G({\bf x})$
expands positively in the Schur function basis (i.e. is $s$-positive) whenever $G$ is claw-free.

\begin{conjecture}[Stanley \cite{StChrom1}]\label{conj:stan}
If $G$ is claw-free, then $X_G({\bf x})$ is $s$-positive.
\end{conjecture}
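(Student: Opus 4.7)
The plan is to attack this via a reduction to the strongest known partial case combined with a structural decomposition. The central partial result is Gasharov's theorem, which gives a combinatorial Schur expansion of $X_G$ when $G$ is the incomparability graph of a $(3+1)$-free poset, via $P$-tableaux (rectangular arrays of elements of $P$ satisfying natural row/column conditions). This already proves Conjecture \ref{conj:stan} for the subclass of claw-free graphs that arise as such incomparability graphs. So the first step I would take is to isolate exactly which combinatorial feature of $P$-tableaux produces the Schur positivity, with an eye toward generalizing the index set.

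The second step would invoke the Chudnovsky--Seymour structure theorem for claw-free graphs, which recognizes every connected claw-free graph as either a quasi-line graph, a graph with stability number at most three, or an ``icosahedron-like'' exception, combined with explicit composition operations (clique substitution, strip composition, etc.). I would try to handle each atomic class separately: for line graphs $L(H)$, attempt to express the Schur coefficients of $X_{L(H)}$ in terms of statistics on matchings or orientations of $H$; for quasi-line and circular interval graphs, try to cyclically adapt the $P$-tableau bijection; and for the exceptional pieces, verify the conjecture by direct computation. In parallel, I would attempt to show that each composition operation in the structure theorem preserves $s$-positivity of $X_G$, perhaps by constructing an explicit positive rule for how the Schur expansion transforms under the operation.

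The main obstacle, and the reason this has remained open since 1995, is precisely the composition step. Schur positivity is an exceedingly rigid property, and the natural graph operations from structural graph theory (clique substitution, strip gluing) are not known to preserve Schur positivity of chromatic symmetric functions; indeed, even multiplication of two Schur-positive symmetric functions followed by plethystic substitutions need not preserve the property in a controlled way. A secondary obstacle is the absence of a representation-theoretic or geometric model for $X_G$ outside the unit-interval regime: the Brosnan--Chow/Guay-Paquet realization via cohomology of regular semisimple Hessenberg varieties, which underlies much recent progress on the Stanley--Stembridge conjecture, does not extend to arbitrary claw-free $G$, so one loses the main external lever.

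Given these obstructions, a realistic intermediate target supported by the results of this paper would be to first prove a weaker positivity statement (for instance, Newton-polytope saturation or Lorentzian-type concavity of the $s$-expansion coefficients) for all claw-free $G$, and then use such concavity constraints to rule out sign changes in the Schur expansion in conjunction with boundary data from Gasharov's formula. I would expect full Schur positivity for all claw-free graphs to require genuinely new input beyond the combinatorial and geometric tools currently available.
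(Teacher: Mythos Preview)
The statement you are addressing is an open \emph{conjecture}, not a theorem proved in the paper. The paper states Conjecture~\ref{conj:stan} as motivation and context only; there is no proof of it anywhere in the paper, and indeed it remains open. So there is nothing to compare your proposal against.

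That said, your write-up is not a proof and you essentially acknowledge this yourself. What you have produced is a research program: invoke Gasharov's result for the $(3+1)$-free case, apply the Chudnovsky--Seymour structure theorem for claw-free graphs, handle each atomic class separately, and hope that the composition operations preserve $s$-positivity. The genuine gap is exactly the one you name: there is no known mechanism by which the gluing operations in the Chudnovsky--Seymour decomposition (clique substitution, strip composition) interact well with Schur positivity of $X_G$. Without that, the decomposition buys you nothing, because $s$-positivity is not a local property that can be assembled from pieces. Your fallback suggestion of proving Newton-polytope or Lorentzian-type constraints and then ``ruling out sign changes'' is also not a viable route: log-concavity or M-convexity of the support places no constraint whatsoever on the signs of coefficients in the Schur expansion, so concavity data cannot force $s$-positivity.

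In short, your proposal correctly surveys the landscape and identifies the central obstruction, but it does not contain a new idea that would overcome it. Since the paper does not claim to prove this conjecture either, there is no discrepancy to report --- only the observation that what you have written is a plan of attack, not a proof.
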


Stanley verified this conjecture for {\em co-bipartite graphs}, which are complements of bipartite graphs. Another conjecture of Stanley and Stembridge that motivates this paper states that if the graph $G=G(P)$ is the {\em incomparability graph} of a (3+1)-free
poset $P$ (a poset with no subposet consisting of a 3-chain and an
incomparable element), then $X_{G(P)}({\bf x})$ expands positively in the
elementary basis of symmetric functions (i.e. is $e$-positive) \cite{StSt}. 
Gasharov proved that $X_{G(P)}({\bf x})$ is $s$-positive \cite{G2}, which is implied by both conjectures since such graphs $G(P)$ are claw-free and $e$-positivity implies $s$-positivity. Lewis--Zhang in \cite{LZ} and Guay-Paquet--Morales--Rowland  in \cite{GPMR} studied the enumeration and structure of (3+1)-free posets. In \cite{MGP}, Guay-Paquet used this work and the {\em modular relation} of $X_{G(P)}({\bf x})$ 
to reduce this conjecture to a subfamily of graphs
in bijection with Dyck paths \cite{MGP}, one of the hundreds of objects counted
by the Catalan numbers. Given a Dyck path $d$ from $(0,0)$ to $(n,n)$, the \emph{indifference graph}  $G(d)$ of the Dyck path $d$ has vertices $\{1,\ldots, n\}$ and edges  $(i,j)$ with $i<j$, if the cell
$(i,j)$ is below the path $d$ (see Figure~\ref{fig:ex_path_graph}).

\begin{conjecture}[Stanley--Stembridge \cite{StSt}] \label{conj: StaSteShWa}
For any Dyck path $d$,  $X_{G(d)}({\bf x})$ is $e$-positive.\footnote{A proof of this conjecture was announced by Hikita in \cite{hikita2024proofstanleystembridgeconjecture}.}
\end{conjecture}

Recently, $X_{G(d)}({\bf x})$ has been related to other very interesting
mathematical objects: the representation theory of {\em Hessenberg varieties} (see e.g., \cite{ShW2}) and the space of diagonal harmonics \cite{CM,HW,AP,AS}. For other classes of graphs with $e$-positive chromatic symmetric function, see \cite{FHM}.

\subsection{Main results}

The purpose of this paper is to contribute to the study of three classes of chromatic symmetric functions: those of co-bipartite graphs $G$, of indifference graphs $G(d)$ of Dyck paths $d$, and of incomparability graphs $G(P)$ of (3+1)-free posets $P$.  We note that indifference graphs of Dyck paths are the same as incomparability graphs of posets that are both (3+1)-free and (2+2)-free (such posets are sometimes called \emph{unit interval orders}, e.g., see \cite[Section 2.1]{AP}), and co-bipartite graphs are incomparability graphs of 3-free posets.  Thus, the class of incomparabilty graphs of (3+1)-free posets contains both the class of co-bipartite graphs and the class of indifference graphs of Dyck paths (see \cite{GPMR,MGP}).

We study these three classes via their Newton polytopes.  Recall that the Newton polytope of a multivariate polynomial $p({\bf x}) \in \mathbb{R}[x_1,\ldots,x_k]$ is the convex hull in $\mathbb{R}^k$ of the support of $p$, and that $p$ is said to be SNP if its Newton polytope is saturated, i.e. if the support of $p$ is equal to the set of lattice points in the Newton polytope of $p$ \cite{MTY}.  Our main result is that the chromatic symmetric functions in each of the three classes that we study are SNP, and moreover, their Newton polytopes are explicitly described permutahedra. In what follows, for a partition $\lambda$ of length $\ell$ and a nonnegative integer $k \geq \ell$, the permutahedron $\mathcal{P}^{(k)}_{\lambda}$ is the convex hull of permutations of $(\lambda_1,\ldots,\lambda_{\ell},0,\ldots,0)$ in $\mathbb{R}^k$. If $k<\ell$, the permutahedron $\mathcal{P}^{(k)}_{\lambda}$ is the empty set.

\begin{theorem}\label{thm:main}\
\begin{enumerate}
    \item (Proposition~\ref{co-bipartite-m-convex}) For $G$ a co-bipartite graph, $X_G(x_1,\ldots,x_k)$ is SNP and its Newton polytope is the permutahedron $\mathcal{P}^{(k)}_{\lambda(G)}$.
    \item (Theorem~\ref{thm:p_lambda}) For $d$ a Dyck path, $X_{G(d)}(x_1,\ldots,x_k)$ is SNP and its Newton polytope is the permutahedron  $\mathcal{P}^{(k)}_{\lambda(d)}$.
    \item (Theorem~\ref{thm:p_lambda 3+1 free}) For $G(P)$ the incomparability graph of a (3+1)-free poset $P$, $X_{G(P)}(x_1, \ldots, x_k)$ is SNP and its Newton polytope is the permutahedron $\mathcal{P}^{(k)}_{\lambda(P)}$.
\end{enumerate}
\end{theorem}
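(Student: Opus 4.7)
The plan is to prove the three parts in increasing generality, using the unifying framework that $X_G$ is a symmetric polynomial, hence its Newton polytope is $S_k$-invariant, and if it is a permutahedron, it is determined by its dominant partition. Writing $X_G = \sum_\alpha c_\alpha x^\alpha$, where $c_\alpha$ counts proper colorings of type $\alpha$, the task splits into: (i) identifying the dominant partition $\lambda$; (ii) showing $\alpha^{\text{sort}} \trianglelefteq \lambda$ in dominance order for every $\alpha$ in the support (the containment direction); and (iii) realizing every composition $\alpha$ with $\alpha^{\text{sort}} \trianglelefteq \lambda$ by a proper coloring (the SNP direction). Recall that the lattice points of $\mathcal{P}^{(k)}_\lambda$ are exactly the compositions in $\mathbb{Z}_{\geq 0}^k$ whose sort is dominated by $\lambda$, so (ii) and (iii) together give the claim.

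For the co-bipartite case (1), I would exploit that since $\overline{G}$ is bipartite its cliques have size at most two, and therefore the independent sets of $G$ also have size at most two. Hence the dominant partition $\lambda(G)$ is determined by a maximum matching of $\overline{G}$: it has as many $2$'s as the matching number and $1$'s for the remaining vertices. The containment direction is then automatic because every color class in a proper coloring has size at most two, so no sorted weight can exceed $\lambda(G)$ in dominance. The SNP direction follows from elementary swapping arguments: one can break a size-$2$ class into two singletons or merge a pair of compatible singletons to realize any allowed transfer.

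For the Dyck path case (2), independent sets of $G(d)$ are chains of the corresponding unit interval order, and the partition $\lambda(d)$ is expected to be given by an explicit formula in terms of the area sequence of $d$. I would first exhibit a canonical proper coloring realizing $\lambda(d)$ by a greedy chain decomposition. The containment $\alpha^{\text{sort}} \trianglelefteq \lambda(d)$ should follow from an extremality argument showing that no partition into chains can be more concentrated than $\lambda(d)$. For the SNP direction, one uses that dominance order is generated by one-step transfer moves, so it suffices to prove the support is closed under such transfers, namely an exchange property: given a chain decomposition of type $\alpha$, one can move a single vertex between two chains to produce a decomposition of type $\alpha'$, provided $\alpha'$ is obtained from $\alpha$ by a transfer move. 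I expect this exchange/realizability step to be the principal obstacle, since the move must respect all edges of $G(d)$ simultaneously and requires careful use of the interval structure.

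For the (3+1)-free poset case (3), I would leverage the Guay--Paquet framework via modular relations, which expresses $X_{G(P)}$ in terms of $X_{G(d)}$ for Dyck paths $d$. The dominant partition $\lambda(P)$ should be characterizable as a dominance-maximum of the $\lambda(d)$ arising in this reduction and, crucially, realized by an explicit chain decomposition of $P$ itself. The delicate point, and the second main obstacle, is that the modular identities are signed linear combinations and so do not a priori control how supports behave under the reduction. My plan to overcome this is to supplement the reduction with a direct construction: produce the extremal chain decomposition of $P$ by combining an antichain decomposition (coming from the (3+1)-free structure) with the Dyck path construction inside each comparability block, and then verify the exchange property for $P$ by reducing every transfer move to a transfer inside one of these blocks, where the Dyck path case (2) applies.
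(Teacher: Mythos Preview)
Your overall architecture is right, but you are missing the paper's main shortcut and you have one factual error that makes part (3) look much harder than it is.

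\textbf{The SNP direction is free.} In all three cases the graphs are claw-free (a co-bipartite graph is a union of two cliques plus extra edges; incomparability graphs of $(3{+}1)$-free posets are claw-free by definition of $(3{+}1)$-freeness). Stanley proved that claw-free graphs are \emph{nice}: whenever $\lambda\in\supp(X_G)$ and $\mu\preceq\lambda$, then $\mu\in\supp(X_G)$. Since $X_G$ is symmetric, this immediately gives that the support contains every lattice point of $\mathcal{P}^{(k)}_{\lambda}$ once you know $\lambda$ is in the support. So your ``exchange/realizability'' step---which you correctly flag as the principal obstacle---never has to be carried out. The paper does no swap arguments to establish SNP; it only needs to identify the dominant $\lambda$ and prove the containment $\alpha^{\mathrm{sort}}\preceq\lambda$. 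For (1) this containment is trivial because the partitions $2^i1^{n-2i}$ are totally ordered; for (2) it is the content of the greedy-coloring lemma, proved by a color-swap induction showing the greedy coloring is $k$-maximal for every $k$.

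\textbf{The Guay-Paquet reduction is a convex combination, not signed.} For (3), the relevant identity (Guay-Paquet, Prop.~4.1(iv)) writes $X_{G(P)}$ as $\sum_j q_j\, X(L_j)$ where the $q_j$ are \emph{probabilities}, hence nonnegative. Consequently $\supp X_{G(P)} = \bigcup_j \supp X(L_j)$ with no cancellation issues at all. The paper then observes (a one-paragraph lemma) that passing from $U_j$ to $U_{j+1}$ only adds relations to the poset, hence only deletes edges from the incomparability graph, so $\supp X(L_j)\subseteq\supp X(L_{j+1})$. Thus the union collapses to the single term with maximal $j$, which has one fewer bicolored block, and induction reduces to case (2). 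Your proposed ``direct construction'' inside comparability blocks is unnecessary, and the ``delicate point'' you anticipate does not arise.
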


In each of the three cases of Theorem~\ref{thm:main}, we explicitly describe the Newton polytope: for example, in case (2), $\lambda(d)$ is the weight $\lambda^{\gr}(d)$ of the greedy coloring of $G(d)$. Interestingly, the weight $\lambda^{\gr}(d)$ appears in representation theory, where it is the partition arising from the Jordan form of the unique nilpotent orbit associated to a given ad-nilpotent ideal $I$ of the set of strictly upper triangular matrices \cite{Gerstenhaber} (see Remarks~\ref{rem:gers} and~\ref{rem:coloringlemma}).  For more on this connection, including the relation between Dyck paths and ad-nilpotent ideals, we point to \cite[Section 6]{FennSommers}.

The proof of Theorem~\ref{thm:main} in each case proceeds by finding a special coloring \(\text{gr}\). These symmetric functions are by definition positive in the monomial basis, and all three classes of graphs have Stanley's {\em nice} property (see Section~\ref{sec:nice}), so the support will contain any integer vector dominated by the weight of \(\text{gr}\). To complete the proof, we prove that any vector in the support is dominated by the weight of \(\text{gr}\).

We reiterate that case (3) implies both cases (1) and (2). While preparing the manuscript, the authors learned that one of the main ingredients for case (2), Lemma~\ref{lemma greedy is max}, 
was already known to Chow in an unpublished
note \cite{TC} (see Remark~\ref{rem:coloringlemma}). Cases (1) and (3) are new, and case (3) requires an in-depth study of the structure of (3+1)-free posets from \cite{GPMR,MGP} and the recent modular relation of Guay-Paquet \cite{MGP} and Orellana--Scott \cite{OS}.

\subsection{Applications and conjectures}

Here we highlight some of the consequences of Theorem~\ref{thm:main} that appear in detail in Sections~\ref{sec:stabandlor} and ~\ref{sec: conjecture}.

\subsubsection{M-convexity and the Lorentzian property}

One strengthening of the SNP property is  M-convexity, a property that first appeared in discrete convex analysis \cite{M03}.  Write $e_i$ for the $i$th standard unit vector in $\mathbb{N}^k$. A subset $J$ of $\mathbb{N}^k$ is {\em
  matroid-convex} or {\em M-convex} if for all $\alpha,\beta \in J$
and for all $i$ such that $\alpha_i >\beta_i$, there exists a $j$ such
that $\beta_j >\alpha_j$ and $\alpha - e_i+e_j \in J$.  The convex hull of any M-convex
set is a {\em generalized
permutahedron} \cite{PosGP}, and the set of lattice points of an integral generalized permutahedron is an M-convex set \cite[Theorem 1.9]{M03}. Let $H_k^n$ be the set of degree $n$ homogeneous polynomials in $\R[x_1,\ldots,x_k]$.  A
polynomial $f$ in $H_k^n$ is M-convex if its support is
M-convex. Note that if $f$ is M-convex, then $f$ is SNP \cite{MTY}.

The notion of M-convexity is part of the definition of {\em Lorentzian polynomials}, defined by Br\"and\'en--Huh in \cite{bh20}, as
a common generalization of stable polynomials (a multivariate analogue
of real-rooted polynomials) and volume polynomials in algebraic geometry. 
 A polynomial $f$ in $H_k^n$ with nonnegative coefficients is {\em
    Lorentzian} if and only if (i) its support is M-convex and (ii)
  the Hessian of any of its partial derivatives of order $n-2$ has at
  most one positive eigenvalue \cite{bh20}.

Lorentzian polynomials are of interest in part because they satisfy both a discrete and continuous type of log-concavity  (see \cite[Section 2.4, Proposition 4.4]{bh20} and Proposition~\ref{thm: log concavity of LP}). Br\"and\'en and Huh used the theory of Lorentzian
polynomials to prove the strongest version of {\em Mason's conjecture} \cite[Theorem 4.14]{bh20}\footnote{The strongest version of Mason's conjecture was also proved independently and simultaneously in \cite{ALGVIII}.}: The numbers $I_k$ of
independent sets of size $k$ in a matroid with $n$ elements form an
{\em ultra log-concave} sequence \cite{mason}.
Huh, Matherne, M\'esz\'aros, and St. Dizier showed in
\cite{HMMStD} that (normalized) Schur functions and certain Schubert polynomials are also Lorentzian.  They also conjectured that a host of other Schur-like polynomials in algebraic combinatorics should be Lorentzian.

Huh showed that the coefficients of chromatic polynomials of
graphs are log-concave \cite{h12}. Because of the advent of Lorentzian
polynomials to study log-concavity of multivariate polynomials in
algebraic combinatorics, it is natural to consider chromatic
symmetric functions $X_G({\bf x})$.

The main conjecture of this paper is that chromatic symmetric functions of Dyck paths are Lorentzian\footnote{Liu and Vinzant (private communication) found a counterexample to this conjecture for $n=8$. See Appendix~\ref{appendix:counterex}.}.

\begin{conjecture}[{Conjecture~\ref{conj:dyckLor}}] \label{conj: csf
    lorentzian}
 Let $d$ be a Dyck path. Then $X_{G(d)}$, restricted to any finite number of variables, is Lorentzian.
\end{conjecture}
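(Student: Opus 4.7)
The plan is to verify the two defining conditions of a Lorentzian polynomial separately for the truncation $f_k := X_{G(d)}(x_1,\ldots,x_k)$, where $n$ is the number of vertices of $G(d)$. The M-convexity of the support, condition (i), is essentially free from Theorem~\ref{thm:main}(2): $f_k$ is SNP with Newton polytope the integral permutahedron $\mathcal{P}^{(k)}_{\lambda(d)}$, and the lattice points of an integral generalized permutahedron form an M-convex set. Since Lorentzianness is preserved under the specialization $x_k \mapsto 0$, it suffices to establish the conjecture for all sufficiently large $k$. The real content is therefore condition (ii): every order $n-2$ partial derivative of $f_k$ must have a Hessian with at most one positive eigenvalue.

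For condition (ii), my first instinct is to try the closure operations from \cite{bh20}: Lorentzian polynomials are closed under products, directional derivatives, and specialization along nonnegative linear maps, so one would like to realize $X_{G(d)}$ as a polynomial transformation of known Lorentzian building blocks --- most naturally, the normalized Schur polynomials shown to be Lorentzian in \cite{HMMStD}. The obstruction is that, although $X_{G(d)}$ is Schur-positive by Gasharov's theorem, Lorentzianness is not preserved under conic combinations, so the Schur expansion alone is insufficient. More promising approaches are: (a) exploit a cohomological interpretation of $X_{G(d)}$ via Hessenberg varieties, where Hodge--Riemann-type bilinear relations would supply the correct Hessian signature; or (b) use Guay-Paquet's modular law from \cite{MGP} inductively, combined with a careful analysis of which linear combinations of Lorentzian polynomials remain Lorentzian.

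I would attack the abelian case first, since that is what the abstract claims to resolve. Abelian Dyck paths, corresponding to abelian ad-nilpotent ideals in the sense of \cite{FennSommers}, have indifference graphs with a very rigid structure, essentially a chain of overlapping cliques. I expect $X_{G(d)}$ in this case to admit an explicit expression --- for example as a specialization of a product of normalized elementary symmetric functions or normalized Schur polynomials --- from which the Lorentzian property follows by the closure operations of \cite{bh20}. The main obstacle is extending this strategy past the abelian regime: without the factorization-like structure that abelianness provides, condition (ii) becomes a genuinely spectral statement whose verification seems to require either a new Hodge-theoretic interpretation of $X_{G(d)}$ or an entirely new combinatorial identity, and this is where I expect the substance of the conjecture to lie.
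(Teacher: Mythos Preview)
The statement you are attempting to prove is a \emph{conjecture}, not a theorem: the paper does not prove it in general, and explicitly leaves it open outside the abelian case. So there is no ``paper's proof'' of the full statement to compare your proposal against. Your closing paragraph correctly anticipates this --- the general spectral condition~(ii) remains open --- but the framing of your write-up as a proof plan for the conjecture itself should be adjusted: what the paper actually establishes is Theorem~\ref{thm: Lorentzian abelian case}, the abelian case.

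For that abelian case, your proposed route diverges from the paper's. You expect $X_{G(d)}$ to factor, or to arise as a nonnegative specialization of a product of normalized Schur or elementary polynomials, and then to invoke the closure operations of \cite{bh20}. The paper does not proceed this way, and there is no indication such a factorization exists. Instead, the paper uses Lemma~\ref{lem:StanleyStembridgeCobipartite} to write $X_{G(d)} = \sum_i i!\,(n-2i)!\, r_i(B_\mu)\, m_{2^i1^{n-2i}}$ in terms of rook numbers of the Ferrers board $B_\mu$ encoding the abelian path, then computes the relevant Hessians $H_\alpha$ \emph{explicitly} as structured block matrices, and finally verifies the signature condition via two rook-theoretic inequalities. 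The crucial one (Lemma~\ref{lem:UULC rooks}) is a strengthened ultra log-concavity of rook numbers that follows from the real-rootedness of hit polynomials of Ferrers boards due to Haglund--Ono--Wagner \cite{HOW}. This is precisely why the argument does not extend to arbitrary co-bipartite graphs (Example~\ref{ex:faillor}): the Ferrers hypothesis is essential for the needed inequality.

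A small correction: your description of abelian indifference graphs as ``essentially a chain of overlapping cliques'' is not accurate. Abelian means $G(d)$ is co-bipartite --- two cliques with some edges between them --- not a longer chain. The ``chain of overlapping cliques'' picture describes general indifference graphs of Dyck paths, which is exactly the regime where the conjecture is open.
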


We verify Conjecture~\ref{conj: csf lorentzian} in the special case where the indifference graph $G(d)$ is co-bipartite.  Dyck paths of this type are called \emph{abelian} in the literature \cite{HaPre}, and they form an important class of Dyck paths with connections to Lie theory \cite{HaPre} and ($q$) rook theory \cite{StSt,AN,CMP}.

\begin{theorem}[{Theorem~\ref{thm: Lorentzian abelian case}}]\label{thm: abel Lor}
Let $d$ be an abelian Dyck path.  Then $X_{G(d)}$, restricted to any finite number of variables, is Lorentzian.
\end{theorem}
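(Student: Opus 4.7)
The plan is to verify the two defining conditions for a Lorentzian polynomial for $X = X_{G(d)}(x_1,\ldots,x_k)$: M-convexity of $\supp(X)$, and the Hessian signature condition on every partial derivative of order $n-2$, where $n = |V(G(d))|$.

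The first condition follows immediately from Theorem~\ref{thm:main}(1). Since $d$ is abelian, $G(d)$ is co-bipartite, so the Newton polytope of $X$ is the integral permutahedron $\mathcal{P}^{(k)}_{\lambda(d)}$, and SNP identifies $\supp(X)$ with its lattice points. As a permutahedron is an integral generalized permutahedron, its lattice points form an M-convex set.

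For the Hessian condition, the strategy exploits the clique decomposition $V(G(d)) = A \sqcup B$ with $|A| = p$ and $|B| = q$. Writing $\bar H \subseteq A \times B$ for the set of non-edges of $G(d)$ across the cliques, a proper coloring of $G(d)$ is uniquely a pair $(f_A,f_B)$ of injective color assignments on $A$ and $B$ whose coincidence set is a partial matching $M$ in $\bar H$. Grouping proper colorings by the size $m = |M|$ yields the explicit decomposition
\[
X \;=\; \sum_{m=0}^{\min(p,q)} r_m(\bar H)\, m!\, (n-2m)!\, m_{(2^m,\,1^{n-2m})}(x_1,\ldots,x_k),
\]
where $r_m(\bar H)$ is the number of matchings of size $m$ in $\bar H$ and $m_\mu$ denotes the monomial symmetric function. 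The baseline $G = K_p \sqcup K_q$ (where $\bar H$ is complete bipartite) collapses this to $X = p!\,q!\,e_p(x)\,e_q(x)$, which is visibly Lorentzian as a product of elementary symmetric polynomials. For general abelian $d$, the plan is to recognize the right-hand side as the image of such a product under a Lorentzian-preserving operation from \cite{bh20} --- for instance, a polarization or a specific directional-derivative construction --- or, failing that, to verify the Hessian signature directly from the monomial expansion above.

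The main obstacle is the Hessian signature condition. The $e$-positivity of $X$ in the abelian case does not suffice, since sums of Lorentzian polynomials are generally not Lorentzian; likewise the monomial expansion above is a sum with no obvious closure property. I would attempt an induction on $|E(\bar H)|$, starting from the baseline $K_p \sqcup K_q$ and adding bipartite edges one at a time, with each step realized by a Lorentzian-preserving operation on polynomials. Identifying such an operation --- and matching it with edge addition in $G(d)$ at the level of $X$ --- is the crux of the argument; failing a clean structural reduction, direct control of the positive eigenvalues of the Hessian of each order-$(n-2)$ partial derivative, leveraging the combinatorics of $\bar H$, will be required.
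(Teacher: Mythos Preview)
Your M-convexity argument and the monomial expansion are correct and match the paper. The gap is that you do not actually verify the Hessian signature condition; you only sketch possible strategies and explicitly flag the crux as unresolved.

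The paper's proof proceeds by direct computation. Because $\supp(X) \subseteq \{0,1,2\}^k$, the only relevant order-$(n-2)$ partial derivatives are $\partial^\alpha$ with $\alpha = (2^{i-1},1^{n-2i},0^{k-n+i+1})$ up to permutation. The Hessian $H_\alpha$ has an explicit block form with three distinct entries $a,b,c$ determined by the rook numbers $r_{i-1},r_i,r_{i+1}$ of the Ferrers board $B_\mu$, and its characteristic polynomial factors explicitly (Proposition~\ref{prop: det block matrix}). The signature condition then reduces to two inequalities among $a,b,c$. The first holds for any co-bipartite graph by an elementary counting argument. The second is the heart of the matter: it requires the ultra log-concavity inequality
\[
r_i^2 \geq \Bigl(1+\tfrac{1}{i}\Bigr)\Bigl(1+\tfrac{1}{\ell-i}\Bigr)\Bigl(1+\tfrac{1}{\mu_1-i}\Bigr) r_{i-1}r_{i+1},
\]
which follows from the real-rootedness of the hit polynomial of a Ferrers board due to Haglund--Ono--Wagner \cite{HOW}. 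This is where the abelian hypothesis (Ferrers board) is genuinely used.

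Your proposed induction on $|E(\bar H)|$ cannot work as stated: Example~\ref{ex:faillor} shows that $X_{C_4}$ is not Lorentzian, so the Lorentzian property is not preserved under arbitrary bipartite edge additions to $\bar H$. Any edge-addition argument would have to stay within the class of Ferrers boards, and even then no Lorentzian-preserving operation realizing such a step is identified. Likewise, no concrete operation from \cite{bh20} is exhibited that produces $X$ from $e_p e_q$. The missing idea is precisely the Haglund--Ono--Wagner input, which supplies the quantitative log-concavity needed for the Hessian bound and explains why the argument works for Ferrers boards but not for general co-bipartite graphs.
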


The proof of Theorem~\ref{thm: abel Lor} has interesting connections to rook theory.  Because the monomial expansion of $X_{G(d)}$, for abelian $d$, has coefficients involving rook numbers, a key role in the proof is played by the real-rootedness of the \emph{hit polynomial} of any Ferrers board \cite[Theorem 1]{HOW}, which implies that its coefficients form an ultra log-concave sequence.

For arbitrary Dyck paths $d$, calculations suggest that $X_{G(d)}({\bf x})$ may be {\em stable} (see Conjecture~\ref{conj:dyckstab}), a more
restrictive condition studied by Borcea and Br\"and\'en \cite{BB1,BB2} that
implies the Lorentzian property and is related to real-rootedness.

As a partial result toward Conjecture~\ref{conj: csf lorentzian} in the general case, we note that
our Theorem~\ref{thm:main}~(2) asserts that the support of $X_{G(d)}({\bf x})$
is M-convex since it is a permutahedron, and therefore a generalized
permutahedron.  However, Conjecture~\ref{conj: csf lorentzian} does not extend to the more general class of incomparability graphs of
(3+1)-free posets (see Example~\ref{ex:faillor}), even though
Theorem~\ref{thm:main}~(3) shows their chromatic symmetric functions are
M-convex. See Table~\ref{fig:summary} for a schematic summary of our results and conjectures.

\subsubsection{M-convexity and claw free graphs}

Monical conjectured a relation between $s$-positive chromatic symmetric functions and the SNP property.

\begin{conjecture}[Monical \cite{Mphd}]\label{conj:mon}
If $X_G$ is $s$-positive, then $X_G(x_1,\ldots,x_k)$ is SNP for any $k$.
\end{conjecture}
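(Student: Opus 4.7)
The plan is to leverage the $s$-positivity hypothesis by working with the Schur expansion. Write $X_G(x_1,\ldots,x_k) = \sum_\lambda c_\lambda\, s_\lambda(x_1,\ldots,x_k)$ with $c_\lambda \geq 0$. Each Schur polynomial is itself SNP with Newton polytope the permutahedron $\mathcal{P}^{(k)}_\lambda$ (classically, via the description of the support of $s_\lambda$ as the set of compositions whose sorted rearrangement is dominated by $\lambda$; equivalently, via the Lorentzian property of normalized Schur polynomials proved in \cite{HMMStD}). Consequently
\[
\supp\bigl(X_G(x_1,\ldots,x_k)\bigr) \;=\; \bigcup_{c_\lambda > 0}\bigl(\mathcal{P}^{(k)}_\lambda \cap \mathbb{Z}^k\bigr),
\]
and $\Newton(X_G)$ is the convex hull of this union; SNP amounts to showing that every lattice point of this convex hull lies in at least one $\mathcal{P}^{(k)}_\lambda$ with $c_\lambda > 0$.

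The key reduction would be to prove that the set $S_G := \{\lambda : c_\lambda > 0\}$ has a unique maximum $\mu^*$ in the dominance order. If so, then every $\lambda \in S_G$ satisfies $\lambda \leq \mu^*$, giving $\mathcal{P}^{(k)}_\lambda \subseteq \mathcal{P}^{(k)}_{\mu^*}$, so $\Newton(X_G) = \mathcal{P}^{(k)}_{\mu^*}$; and since $s_{\mu^*}$ alone is SNP with this Newton polytope, the support of $X_G$ already captures every lattice point there. A natural candidate for $\mu^*$ is the sorted weight partition of a suitably canonical proper coloring of $G$, in analogy with $\lambda^{\gr}(d)$ in Theorem~\ref{thm:main}(2). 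Producing such a $\mu^*$ combinatorially would likely require a tableau-style interpretation of the coefficients $c_\lambda$ (\`a la Gasharov), together with a proof that one partition dominates all other $\lambda \in S_G$.

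The main obstacle is exactly the uniqueness of this dominance-maximum: there is no \emph{a priori} reason why an $s$-positive symmetric function should have a greatest element in the support of its Schur expansion. Two plausible alternatives are (a) to show that $S_G$ is closed under dominance join, which would force a greatest element to appear automatically, or (b) to bypass the Schur expansion altogether and prove SNP directly by constructing a distinguished proper coloring of $G$ whose weight dominates every other proper coloring weight, combined with an extension of Stanley's nice property to all $G$ satisfying the hypothesis --- a strategy that mirrors, and would strictly generalize, the approach of Theorem~\ref{thm:main}. Carrying out either (a) or (b) for \emph{all} $s$-positive $X_G$, rather than only for the classes covered by Theorem~\ref{thm:main}, is what keeps Conjecture~\ref{conj:mon} substantially open.
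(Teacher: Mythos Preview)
Your proposal is not a proof and, appropriately, does not claim to be one: the statement is a \emph{conjecture} in the paper (Conjecture~\ref{conj:mon}), and the paper does not prove it either. The paper only confirms it for incomparability graphs of $(3+1)$-free posets via Theorem~\ref{thm:p_lambda 3+1 free}, and in Section~\ref{snpsection} explains why a general proof is elusive. So there is no ``paper's own proof'' to compare against; your write-up is best read as a discussion of strategy, and on that level it is mostly on target.

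One correction to your plan~(b): you do not need an ``extension of Stanley's nice property to all $G$ satisfying the hypothesis'' --- this is already a theorem. By Proposition~\ref{prop:schur-nice}, $s$-positivity of $X_G$ implies $G$ is nice, so the downward closure of the monomial support in dominance order comes for free. The genuine obstruction is exactly the one you isolate: the set $S_G=\{\lambda:c_\lambda>0\}$ need not have a unique dominance-maximum. The paper makes this concrete in Section~\ref{snpsection}: the $s$-positive symmetric function $s_{6222}+s_{444}$ is not SNP, so your strategy~(a) (closure under dominance join) cannot be a purely symmetric-function argument and must use the graph structure. The paper also notes (Example~\ref{example:triforce}) that even when SNP holds, the stronger M-convexity can fail for claw-free $s$-positive $X_G$, so the permutahedral picture you sketch for Theorem~\ref{thm:main} does not extend verbatim. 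In short, your diagnosis of the gap is accurate; what remains open is precisely producing, from the graph $G$ alone, a single coloring whose weight dominates all others whenever $X_G$ is $s$-positive.
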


Gasharov showed that for (3+1)-free posets \(P\), $X_{G(P)}$ is \(s\)-positive \cite{G2}. Thus, Theorem~\ref{thm:main} (3) is a partial confirmation of Monical's conjecture. We further investigate Monical's conjecture and Conjecture~\ref{conj:stan} in Section~\ref{snpsection}. We find that the strengthening of Conjecture~\ref{conj:mon} fails if we want $X_G$ to be M-convex, rather than just SNP; see Example~\ref{example:triforce} for a claw-free graph $G$ for which $X_G$ is $s$-positive but is not M-convex.

\subsubsection{Computational complexity of our classes of chromatic symmetric functions}

Inspired by recent work of Adve--Robichaux--Yong \cite{ARY,ARY2}, we use the explicit descriptions of the Newton polytopes in Theorem~\ref{thm:main} to analyze the complexity of computing coefficients of our three classes of chromatic symmetric functions (see Section~\ref{complexitysubsection}).  Throughout this section, we write $X_{G} = \sum_{\alpha} c^G_{\alpha} {\bf x}^{\alpha}$, $X_{G(d)} = \sum_{\alpha} c^d_{\alpha} {\bf x}^{\alpha}$, and $X_{G(P)}=\sum_{\alpha} c^P_{\alpha} {\bf x}^{\alpha}$ for the three classes.

\begin{theorem}\label{thm:complexthm}\
\begin{enumerate}
   
    \item (Proposition~\ref{prop:nonvanishing 3+1 free}) 
    Deciding whether any given coefficient \(c^P_{\alpha}\) is nonzero is in \(\mathsf{P}\).
    \item (Proposition~\ref{prop:complexity-cobipartite}) 
    Determining the value of any given coefficient \(c^G_{\alpha}\) is \(\#\mathsf{P}\)-complete.
\end{enumerate}
\end{theorem}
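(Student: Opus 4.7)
For part (1), the plan is to exploit Theorem~\ref{thm:main}(3) to reduce nonvanishing of $c^P_\alpha$ to lattice-point membership in an explicit permutahedron. Since $X_{G(P)}(x_1,\ldots,x_k)$ is SNP with Newton polytope $\mathcal{P}^{(k)}_{\lambda(P)}$, one has $c^P_\alpha \neq 0$ if and only if $\alpha \in \mathcal{P}^{(k)}_{\lambda(P)}\cap \mathbb{Z}^k$. By the classical Rado characterization of the vertices of a permutahedron, the latter holds exactly when $\sum_i \alpha_i = |\lambda(P)|$ and the weakly decreasing rearrangement $\alpha^{\downarrow}$ is dominated by $\lambda(P)$ (padded with zeros up to length $k$). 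Given $P$, the incomparability graph $G(P)$ is constructible in polynomial time, and the partition $\lambda(P)$ — which, by the proof of Theorem~\ref{thm:main}(3), is the weight of a specific greedy-type coloring — can be extracted in polynomial time as well. The dominance-order check is a linear scan, so the overall decision procedure runs in polynomial time. I do not expect any obstacle here beyond bookkeeping.

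For part (2), I would first dispatch membership in $\#\mathsf{P}$: one nondeterministically guesses a map $f\colon V(G)\to [k]$ and accepts iff $f$ is proper with color-use multiset $\alpha$, both checkable in polynomial time. For $\#\mathsf{P}$-hardness, the plan is to reduce from computing the permanent of a $\{0,1\}$-matrix, which is $\#\mathsf{P}$-complete by Valiant. Given such a matrix $M=(M_{ij})_{i,j=1}^n$, I construct the co-bipartite graph $G$ on $V=A\sqcup B$ with $|A|=|B|=n$, making $A$ and $B$ into cliques and including the cross-edge $(a_i,b_j)\in E(G)$ precisely when $M_{ij}=0$. The target claim is
\[
c^G_{(2^n)} \;=\; n!\cdot \operatorname{perm}(M),
\]
which lets one read off $\operatorname{perm}(M)$ from a single coefficient query and thereby transfers the hardness.

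The only substantive thing to verify is the combinatorial identity above. A proper coloring $f\colon V(G)\to [n]$ with color-use multiset $(2,2,\ldots,2)$ partitions $V(G)$ into $n$ monochromatic independent $2$-sets. Since $A$ and $B$ are both cliques in $G$, any independent $2$-set consists of one vertex in $A$ and one in $B$; the condition $(a_i,b_j)\notin E(G)$ says exactly that $M_{ij}=1$. Thus the unordered partition underlying $f$ is a perfect matching of the bipartite graph with biadjacency matrix $M$, and labeling the $n$ matched pairs with the $n$ colors contributes the factor of $n!$. This bijection is the main (modest) obstacle; once it is checked, both parts of the theorem follow. A secondary point to confirm is that the Newton-polytope support description from Theorem~\ref{thm:main}(3) is indeed constructive enough to yield a polynomial-time witness for $\lambda(P)$, but this is transparent from how $\lambda(P)$ is built in the proof of that theorem.
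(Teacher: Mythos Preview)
Your proposal is correct and follows essentially the same route as the paper. Two brief remarks on where you diverge in detail:

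For part~(1), the paper makes explicit one step you wave away as ``transparent'': computing $\lambda^{\gr}(P)$ requires, for each bicolored graph $H$ in the part listing of $P$, identifying the largest $j$ with $q_j\neq 0$. The paper observes that this $j$ equals the size of a maximum matching in $H$, which is computable in polynomial time. Without naming this step, your claim that $\lambda(P)$ is extractable in polynomial time is underspecified---the definition of $\lambda^{\gr}(P)$ in Section~\ref{subsec: structure 3+1 free} is not a priori algorithmic, since the $q_j$ are probabilities over random matchings. Once you supply this observation, your argument matches the paper's.

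For part~(2), your direct bijective count of colorings with weight $(2^n)$ is exactly the content of Lemma~\ref{lem:StanleyStembridgeCobipartite} specialized to $i=n$, which is what the paper cites. Your inclusion of the $\#\mathsf{P}$-membership argument is a small addition the paper omits.
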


Theorem~\ref{thm:complexthm} (1) implies that deciding nonvanishing of any given $c_\alpha^G$ or $c_\alpha^d$ also takes polynomial time. Similarly, Theorem~\ref{thm:complexthm} (2) implies that determining the value of any given $c_\alpha^P$ is also $\#\mathsf{P}$-complete. We leave open the interesting question of whether or not determining the coefficients $c_\alpha^d$ is $\#\mathsf{P}$-complete.

\begin{table}
    \centering
    \begin{tabular}{|m{3.5cm}"m{4.5cm}|m{4.5cm}|} \hline  
    \(X_{G(P)}\) for (3+1)-free posets \(P\) which
         & are (2+2)-free (indifference graphs of Dyck paths) & may have 2+2 pattern\\
         \thickhline
    are 3-free (co-bipartite graphs) &  (abelian indifference graphs) is Lorentzian by Theorem~\ref{thm: Lorentzian abelian case} & may not be Lorentzian, see Example~\ref{ex:faillor} \\
    \hline
    may have 3 pattern & is conjecturally Lorentzian by Conjecture~\ref{conj:dyckLor} & is M-convex by Theorem~\ref{thm:main}\\\hline
    \end{tabular}
    \caption{Schematic summary of results and conjectures for chromatic symmetric functions of incomparability graphs of certain families of posets.}
    \label{fig:summary}
\end{table}

\subsection{Outline}
In Section~\ref{sec:background}, we present background material on (chromatic) symmetric functions as well as on various properties of their support.  The main results of Sections~\ref{sec: case co-bipartite graphs},~\ref{sec: case dyck paths}, and~\ref{sec: case 3+1 free posets} are that the Newton polytopes of the chromatic symmetric functions of co-bipartite graphs, indifference graphs of Dyck paths, and incomparability graphs of (3+1)-free posets, respectively, are permutahedra.  A direct consequence is that these chromatic symmetric functions are SNP and moreover M-convex.  We conclude the paper with Sections~\ref{sec:stabandlor} and~\ref{sec: conjecture} which collect a number of examples and conjectures about these classes of chromatic symmetric functions: most notably, we conjecture that chromatic symmetric functions of indifference graphs of Dyck paths are Lorentzian, and we verify the conjecture for abelian Dyck paths.  We also use our description of the Newton polytopes to analyze the complexity of our classes of chromatic symmetric functions and to make a conjecture about the $\zeta$ map from diagonal harmonics (e.g. see \cite[Theorem 3.15]{HagqtCat}) relating two Dyck paths encoding unit interval orders.

\section{Background} 
\label{sec:background}

\subsection{Partitions and symmetric functions}

The \emph{dominance order} on the set of partitions of the same size is defined as follows: $\lambda \preceq \mu$ if $\sum_{i=1}^k \lambda_i \leq \sum_{i=1}^k \mu_i$ for all \(k\). Similarly, we use the dominance order for compositions of the same size: $\gamma \preceq \beta$ if $\sum_{i=1}^k \gamma_i \leq \sum_{i=1}^k \beta_i$ for all \(k\).  

Let $\Lambda$ denote the ring of symmetric functions and $\Lambda_k$ be the subring of $\Lambda$ of symmetric polynomials in $k$ variables. Let $m_{\lambda}$ denote the \emph{monomial symmetric functions}
\[
m_{\lambda} = \sum_{\alpha} x_1^{\alpha_1} x_2^{\alpha_2}\cdots,
\]
where the sum is over all permutations $\alpha$ of the vector $\lambda=(\lambda_1,\lambda_2,\ldots)$. Let $s_{\lambda}$ denote the \emph{Schur symmetric functions}
\[
s_{\lambda} = \sum_{\mu} K_{\lambda,\mu} m_{\mu},    
\]
where $K_{\lambda,\mu}$ is the number of {\emph semistandard Young tableaux} (SSYT) of shape $\lambda$ and content $\mu$. Let  $e_{\lambda}$ denote the \emph{elementary symmetric functions}
\[
e_{\lambda} = e_{\lambda_1}\cdots e_{\lambda_{\ell}}, \quad \text{where} \quad e_k = \sum_{i_1<\cdots <i_k} x_{i_1}\cdots x_{i_k}.
\]

Given a basis $g_{\lambda}$ of $\Lambda$, we say that $f$ in $\Lambda$ is \emph{$g$-positive} if in the $g$-expansion of $f=\sum_{\lambda} c_{\lambda} g_{\lambda}$ all the coefficients $c_{\lambda}$ are nonnegative.  For more details on symmetric functions, see \cite[Ch. 7]{EC2}.

\subsection{The saturated Newton polytope (SNP) property}

For a multivariate polynomial  \(p = \sum_{\alpha} c_{\alpha} {\bf x}^{\alpha}\) in  \(\mathbb{R}[x_1, \ldots, x_k]\), the \emph{support} of \(p\), denoted by \(\supp(p)\), is the set
\(\{\alpha \mid c_{\alpha} \neq 0\}\) in \(\mathbb{N}^k\)  of exponents of monomials with nonzero coefficients in \(p\). For a homogeneous polynomial of degree \(n\), the support lies in the $n$th discrete simplex \(\Delta_k^n\), the set of points in \(\mathbb{N}^k\) where the sum of the coordinates is \(n\).

The \emph{Newton polytope} of $p$ is the convex hull of the exponents in the support of $p$; that is,
\[
\Newton(p) = \conv(\alpha \mid \alpha \in \supp(p)) \subset \mathbb{R}^k.
\]

Given a partition $\lambda$ of length $\ell$ and a nonnegative integer $k \geq \ell$, let $\mathcal{P}^{(k)}_{\lambda}$ be the convex hull of permutations of $(\lambda_1,\ldots,\lambda_{\ell},0,\ldots,0)$ in $\mathbb{R}^k$. For a nonnegative integer $k<\ell$, let $\mathcal{P}^{(k)}_{\lambda}$  be $\mathcal{P}^{(\ell)}_{\lambda} \cap \mathbb{R}^k$.

A polynomial \(p \in \mathbb{R}[x_1, \ldots, x_k]\) has \emph{saturated Newton polytope} (``is SNP'') if $\supp(p)=\Newton(p) \cap \mathbb{Z}^k$. That is, \(p\) is SNP if its support coincides with the lattice points of its Newton polytope. This property was defined in \cite{MTY} and studied for polynomials in algebraic combinatorics like Schur functions and \emph{Stanley symmetric functions}, and was conjectured and settled for {\em Schubert} and {\em (double) Schubert polynomials} in \cite{FMStD} and \cite{CC-RMM}, respectively. For example, by Rado's theorem \cite{Rado}, a Schur polynomial $s_{\lambda}(x_1,\ldots,x_k)$ is SNP and its Newton polytope is $\mathcal{P}^{(k)}_{\lambda}$.

A subset $I \subset \mathbb{Z}^k$ is M-convex if for any $i$ in $[k]$ and any $\alpha$ and $\beta$ in $I$ satisfying $\alpha_i>\beta_i$, there is an index $j$ in $[n]$ such that
\[
\alpha_j <\beta_j \quad \text{and} \quad \alpha-e_i+e_j \in I \quad \text{and}\quad \beta -e_j+e_i \in I.
\]
The convex hull of an M-convex set is a \emph{generalized permutahedron} \cite{PosGP}, and the set of lattice points in an integral generalized permutahedron forms an M-convex set \cite[Theorem 1.9]{M03}. If a homogeneous polynomial has M-convex support, then it is SNP, but the converse does not hold (see Example~\ref{example:triforce}).

 We summarize this discussion for the example of Schur polynomials in the theorem below.

\begin{theorem}[Rado \cite{Rado}] \label{thm: Schurs and P_lambda}
The Schur function $s_{\lambda}(x_1, \ldots, x_k)$ is SNP and its Newton polytope  is the permutahedron $\mathcal{P}^{(k)}_{\lambda}$.  In particular, the support of $s_{\lambda}(x_1, \ldots, x_k)$ is M-convex.
\end{theorem}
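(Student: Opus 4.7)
The plan is to compute the support of $s_\lambda(x_1,\ldots,x_k)$ directly through its monomial expansion $s_\lambda = \sum_\mu K_{\lambda,\mu}\, m_\mu$. Since $m_\mu$ is the sum of $x^\alpha$ over distinct rearrangements $\alpha$ of $\mu$, the support of $s_\lambda(x_1,\ldots,x_k)$ equals the set of $\alpha \in \mathbb{N}^k$ whose weakly decreasing rearrangement is a partition $\mu$ of $|\lambda|$ with $K_{\lambda,\mu}>0$.

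The main combinatorial input is the classical Kostka positivity criterion: for partitions $\lambda,\mu$ of the same size, $K_{\lambda,\mu}>0$ if and only if $\mu \preceq \lambda$ in dominance order. The forward direction is a one-line argument comparing column sums of an SSYT with partial sums of $\lambda$, and the converse is proved by an explicit construction of an SSYT of shape $\lambda$ and content $\mu$ (for instance, by induction peeling off a horizontal strip of size $\mu_1$, or by a greedy row-filling). The second ingredient is Rado's theorem, which identifies $\mathcal{P}^{(k)}_\lambda \cap \mathbb{Z}^k$ with exactly those $\alpha \in \mathbb{N}^k$ of sum $|\lambda|$ whose sorted rearrangement is dominated by $\lambda$.

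Combining the two ingredients yields $\supp(s_\lambda(x_1,\ldots,x_k)) = \mathcal{P}^{(k)}_\lambda \cap \mathbb{Z}^k$. To conclude that $\Newton(s_\lambda(x_1,\ldots,x_k)) = \mathcal{P}^{(k)}_\lambda$, note that the vertices of $\mathcal{P}^{(k)}_\lambda$, being permutations of $(\lambda_1,\ldots,\lambda_\ell,0,\ldots,0)$, all lie in the support (since $K_{\lambda,\lambda}=1$), so the convex hull of the support both contains and is contained in $\mathcal{P}^{(k)}_\lambda$. This establishes both SNP and the identification of the Newton polytope. M-convexity then follows because $\mathcal{P}^{(k)}_\lambda$ is an integral generalized permutahedron, so its lattice points form an M-convex set by \cite[Theorem 1.9]{M03}.

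There is no serious obstacle here since both ingredients are classical; the proof is essentially an assembly. The only conceptual step is the reduction of a claim about arbitrary $\alpha \in \mathbb{N}^k$ to the partition case $\mu$, which is immediate because both the support of a symmetric polynomial and the permutahedron $\mathcal{P}^{(k)}_\lambda$ are invariant under coordinate permutations. The corner case $k<\ell(\lambda)$ is handled by the convention $\mathcal{P}^{(k)}_\lambda=\emptyset$ together with the vanishing $s_\lambda(x_1,\ldots,x_k)=0$, since the first column of any SSYT of shape $\lambda$ requires $\ell(\lambda)$ strictly increasing entries.
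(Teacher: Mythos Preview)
The paper does not give a proof of this theorem; it is stated as background and attributed to Rado \cite{Rado}, with the M-convexity clause justified by the remark that integral generalized permutahedra have M-convex lattice point sets \cite[Theorem 1.9]{M03}. Your argument is correct and is the standard way to unpack the statement: combine the Kostka positivity criterion $K_{\lambda,\mu}>0 \iff \mu \preceq \lambda$ with Rado's characterization of the lattice points of $\mathcal{P}^{(k)}_\lambda$ as the integer vectors whose sorted rearrangement is dominated by $\lambda$, and then invoke the generalized-permutahedron fact for M-convexity. There is nothing to compare against on the paper's side, so your proposal simply supplies the proof the paper omits.
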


\subsection{Indifference graphs of Dyck paths and incomparability graphs}

A \emph{Dyck path} $d$ of length \(n\) is a lattice path from $(0,0)$ to $(n,n)$ with north steps $\sfn = (0,1)$ and east steps $\sfe = (1,0)$ that does not go below the diagonal $y=x$. The \emph{bounce path} of $d$ is the path obtained by starting at $(0,0)$, traveling north along $d$ until a $(1,0)$ step of $d$, and then turning east until the diagonal, then turning north until a $(1,0)$ step of $d$, and then again turning east until the diagonal, continuing this process until arriving at $(n,n)$ \cite[Definition 3.1]{HagqtCat}.  The points $(0,0), (i_1,j_1),\ldots, (i_b,j_b)=(n,n)$ where the bounce path hits the diagonal are called \emph{bounce points}. The {\em area sequence} of $d$ is the tuple of nonnegative integers $(a_1,\ldots,a_n)$ where $a_i$ is the number of squares in row $i$ between the path and the diagonal.

Given a Dyck path $d$ of length $n$, let \(G(d)\) be the \emph{indifference graph} of the Dyck path: the graph where the vertices are \([n]\) and there is an edge between \(i\) and \(j\), with \(i < j\), if the square in column \(i\) and row \(n+1-j\) is between the path and the diagonal. Note that here we use matrix coordinates for the cells of the diagram, i.e. row numbers increase down the diagram. Given a Dyck path $d$ of length $n$, the associated \emph{Hessenberg function} $h_d\colon [n] \rightarrow [n]$ is defined by setting $h_d(i)$ to be the number of squares in column $i$ below $d$.  These functions are characterized as follows:  $h_d(i)\geq i$ for all $i$ in $[n]$, and $h_d(i+1)\geq h_d(i)$ for all $i$ in $[n-1]$. Dyck paths whose indifference graphs are co-bipartite are called {\em abelian} \cite{HaPre}.

The \emph{incomparability graph}  \(G(P)\) of a poset $P$ is the graph formed by taking the elements of \(P\) as vertices, and putting an edge between \(i\) and \(j\) if \(i\) and \(j\) are incomparable in $P$.

\begin{remark} \label{indiff graph to incomp graph}
The indifference graph \(G(d)\) of a Dyck path $d$ with associated Hessenberg function $h_d$ is the incomparability graph  of the poset $P$ on  \([n]\) with relations \(i < j\) whenever \(h_d(i) < j\).
\end{remark}

\begin{example} \label{ex: indifference graph and poset}
The Dyck path $d=\sfn\sfn\sfn\sfe\sfe\sfn\sfn\sfe\sfe\sfe$, together with its indifference graph $G(d)$ and associated poset $P$, is illustrated in  Figure~\ref{fig:ex_path_graph}. Its Hessenberg function is $h_d=(h_d(1),\ldots,h_d(5))=(3,3,5,5,5)$ and the poset $P$ with elements $[5]$ has cover relations $1<4,1<5,2<4,2<5$. 
\end{example}

\begin{figure}
    \centering
    \includegraphics{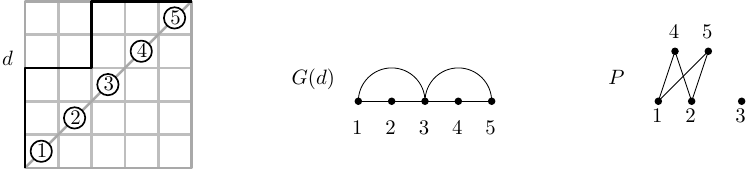}
    \caption{A Dyck path $d$ encoded by the Hessenberg function $h_d=(3,3,5,5,5)$ and its indifference graph $G(d)$ which is an incomparability graph of a unit interval order poset $P$.}
    \label{fig:ex_path_graph}
\end{figure}

\begin{definition} \label{prop: indifference graphs are unit interval orders}
A poset is \emph{(m+n)-free} if there are no two disjoint chains \(a_1 < \cdots < a_m\) and \(b_1 < \cdots < b_n\) in the poset such that every \(a_i\) is incomparable to every \(b_j\).
\end{definition}
\begin{proposition}[{e.g., see \cite[Section 2.1]{AP}}]
Indifference graphs of Dyck paths of length $n$ are exactly the incomparability graphs of (3+1)- and (2+2)-free posets of $n$ elements.
\end{proposition}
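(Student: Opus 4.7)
The plan is to prove both inclusions using Remark~\ref{indiff graph to incomp graph}, which identifies $G(d)$ with the incomparability graph of the poset $P_d$ on $[n]$ whose strict order is $i<_{P_d} j$ (for $i<j$ in $\mathbb{Z}$) iff $h_d(i)<j$.

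For the forward direction I would fix a Dyck path $d$ and show that $P_d$ is $(3+1)$- and $(2+2)$-free. The two key features of $h_d$ are that it is weakly increasing and satisfies $h_d(i)\geq i$, the latter of which ensures that the strict order on $P_d$ refines the integer order. Any forbidden subposet therefore has a determined integer arrangement --- after, in the $(2+2)$ case, swapping the two chains to place $a_1$ leftmost --- and a short case analysis on how the offending elements interleave on $\mathbb{Z}$ reduces the defining relations (namely $h_d(i)<j$ for comparability and $h_d(i)\geq j$ for incomparability when $i<j$) to an impossible inequality of the shape $h_d(x)<y\leq h_d(z)$ for $x\leq z$. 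Concretely, the $(3+1)$ case breaks into four subcases based on which gap between the $a_k$ contains $b$, while the $(2+2)$ case breaks into three subcases based on the position of $a_2$ relative to $b_1,b_2$.

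For the reverse direction, given a $(3+1)$- and $(2+2)$-free poset $P$ on $n$ elements, I would invoke the classical theorem of Fishburn (see e.g.\ \cite[Section~2.1]{AP}) that such posets coincide with the \emph{unit interval orders}: $P$ admits a representation $a\colon P\to\mathbb{R}$ with $i<_P j$ iff $a_j-a_i>1$. Relabeling the elements $1,\ldots,n$ so that $a_1\leq\cdots\leq a_n$ and defining $h(i)\coloneq \max\{j : a_j-a_i\leq 1\}$ produces a weakly increasing function with $h(i)\geq i$, hence the Hessenberg function $h_d$ of a unique Dyck path $d$ of length $n$. By construction, $i<_P j$ iff $h(i)<j$, so Remark~\ref{indiff graph to incomp graph} identifies $G(P)$ with $G(d)$.

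The main obstacle is bookkeeping in the forward direction: one must carefully track which element plays the role of $\min$ in each constraint $h_d(\min)\geq\max$, since the $P_d$-order is a strict refinement of the integer order rather than equal to it. Once that is sorted out, monotonicity of $h_d$ closes each subcase immediately, and the reverse direction is essentially a translation through the unit interval order representation.
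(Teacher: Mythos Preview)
The paper does not prove this proposition; it is simply stated with a reference to \cite[Section~2.1]{AP} and no proof environment follows. So there is no ``paper's own proof'' to compare against.

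Your outline is correct and would constitute a complete proof. A few remarks:

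For the forward direction, your case analysis works exactly as you describe. In fact the $(2+2)$ case is even easier than you suggest: once you normalize so that $a_1$ is the integer-smallest of the four elements, the incomparability of $a_1$ with $b_2$ forces $h_d(a_1)\geq b_2$, while $a_1<b_1$ and monotonicity give $h_d(a_1)\leq h_d(b_1)<b_2$, an immediate contradiction with no need to locate $a_2$. So three subcases are not actually required, though of course doing them does no harm.

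For the reverse direction, invoking the unit interval representation is the standard route and is exactly what the reference \cite{AP} does. (The characterization of $(3+1)$- and $(2+2)$-free posets as unit interval orders is usually attributed to Scott--Suppes rather than Fishburn, who characterized interval orders as the $(2+2)$-free posets; but the citation to \cite{AP} covers this in any case.) Your construction of $h$ from the interval endpoints is correct, and the verification that it is a Hessenberg function and recovers $P$ via Remark~\ref{indiff graph to incomp graph} is straightforward.
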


\subsection{Chromatic symmetric functions}

For \(G\) a graph, \(f\colon V(G) \to \mathbb{N}\) is \emph{proper} if the inverse image of every number (called a color) is an independent subset of the graph's vertices, that is, a subset of the vertices where no two  are adjacent.

The \emph{chromatic symmetric function} for \(G\), defined in \cite{StChrom1}, is the infinite sum

\[
X_G({\bf x}) = \sum_{\overset{f\colon V(G) \to \mathbb{N}}{f \text{ proper}}} {\bf x}^f,
\]
where the sum is over all proper colorings of $G$, and the monomial \({\bf x}^f\) is notation for
\[
{\bf x}^f = \prod_{v \in G} x_{f(v)} = x_1^{|f^{-1}(1)|}x_2^{|f^{-1}(2)|}\cdots.
\]
We call the vertex sets $f^{-1}(i)$ \emph{color classes}. When we restrict it to \(k\) variables (as though the rest were zero),
\[
X_G(x_1, \ldots, x_k) = \sum_{\overset{f\colon V(G) \to [k]}{f \text{ proper}}} {\bf x}^f.
\]

For a coloring  \(f\colon V(G) \to [k]\), define the \emph{weight} of \(f\) to be
\[
\wt(f) = (|f^{-1}(1)|, |f^{-1}(2)|, \ldots, |f^{-1}(k)|) \in \mathbb{N}^k.
\]
Thus, the support of \(X_G(x_1, \ldots, x_k)\) is the set
\[
\{ \wt(f) \mid f\colon V(G) \to [k] \text{ is proper}\}.
\]

Since \(X_G\) is a symmetric function, if \(\alpha \in \supp(X_G)\) then any permutation of \(\alpha\) is also in \(\supp(X_G)\).  Throughout, we will say that a graph $G$ is \emph{$g$-positive} if its chromatic symmetric function $X_G$ is $g$-positive.

\subsection{Chromatic symmetric functions of co-bipartite graphs} \label{sec: cobipartite graphs and rooks}

Stanley and Stembridge \cite{StSt} related $X_G$ of co-bipartite graphs $G$ with {\em rook theory}. Given a \emph{board} $B\subset [n_1] \times [n_2]$, let $r_k=r_k(B)$ be the number of placements of $k$ non-attacking rooks on $B$ (e.g. see \cite{kaplansky1946problem}). Given such a co-bipartite graph $G$, i.e. a complement of a bipartite graph, with vertex set $\{1,\ldots,n_1\} \cup \{n_1+1,\ldots,n_1+n_2\}$, we associate to it a board $B\subset [n_1]\times [n_2]$ with a cell $(i,j)$, in matrix coordinates, for each edge $(i,n_1+j)$ {\em not} in $G$. In the case of abelian Dyck paths $d$, the graph $G(d)$ is encoded by a {\em Ferrers board} $B_{\mu} \subset [n_1]\times [n_2]$ of a partition $\mu=(\mu_1,\ldots,\mu_{\ell})$. The board \(B_\mu\) has a cell \((i,j)\) if \(j \leq \mu_i\), i.e, \(B_\mu\) consists of a justified collection of $\mu_i$ boxes in the $i$th row for $i=1,\ldots,\ell$ (see Example~\ref{ex: cobipartite}).

\begin{lemma}[{Stanley--Stembridge \cite[Remark 4.4]{StSt}}] \label{lem:StanleyStembridgeCobipartite}
Let \(G\) be a co-bipartite graph with vertex set $\{1,\ldots,n_1\} \cup \{n_1+1,\ldots,n_1+n_2\}$, and let $B$ be the board associated to $G$. We have
\begin{equation} \label{eq:rel_rooks}
X_G = \sum_{i} i! \cdot (n_1+n_2-2i)!\cdot r_i(B) \cdot m_{2^i1^{n_1+n_2-2i}}.
\end{equation} 
\end{lemma}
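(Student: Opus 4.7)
My plan is to interpret both sides of \eqref{eq:rel_rooks} as generating functions over proper colorings of $G$, and then perform a direct counting argument in which the combinatorial data of a proper coloring is decomposed into a rook placement on $B$ plus an assignment of colors.

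First, I would exploit the co-bipartite structure. Since $G$ contains the two cliques $K_{n_1}$ on $\{1,\ldots,n_1\}$ and $K_{n_2}$ on $\{n_1+1,\ldots,n_1+n_2\}$, any independent set of $G$ contains at most one vertex from each clique, hence has size at most $2$. Consequently, in every proper coloring $f\colon V(G)\to\mathbb{N}$ each color class $f^{-1}(c)$ has size $1$ or $2$. If $i$ denotes the number of color classes of size $2$, then the weight of $f$ is a permutation of $(2^i,1^{n_1+n_2-2i})$. This already explains why only the monomial symmetric functions $m_{2^i 1^{n_1+n_2-2i}}$ appear in $X_G$.

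Next, I would set up the bijection with rook placements. A color class of size $2$ consists of a pair $(v_L,v_R)$ with $v_L\in\{1,\ldots,n_1\}$, $v_R\in\{n_1+1,\ldots,n_1+n_2\}$, and $(v_L,v_R)\notin E(G)$; by the construction of $B$, such a pair corresponds precisely to a cell $(v_L,v_R-n_1)\in B$. The $i$ pairs of size-$2$ classes form $i$ disjoint such pairs, which is exactly a non-attacking rook placement on $B$ (rows and columns are distinct because the pairs are disjoint). Thus the set of unordered $i$-element families of size-$2$ color classes is in bijection with the set of $i$-rook placements on $B$, whose cardinality is $r_i(B)$.

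Finally, I would count. Fix distinct positive integers $c_1,\ldots,c_{n_1+n_2-i}$ and consider the coefficient in $X_G$ of the monomial $x_{c_1}^2\cdots x_{c_i}^2\, x_{c_{i+1}}\cdots x_{c_{n_1+n_2-i}}$; since $m_{2^i 1^{n_1+n_2-2i}}$ is a multiplicity-free sum over permutations, this coefficient equals the coefficient of $m_{2^i 1^{n_1+n_2-2i}}$. The number of proper colorings producing this monomial is
\[
r_i(B)\cdot i!\cdot (n_1+n_2-2i)!,
\]
where $r_i(B)$ chooses the $i$ rooks (equivalently, the $i$ doubled pairs), $i!$ assigns the colors $c_1,\ldots,c_i$ to these pairs, and $(n_1+n_2-2i)!$ assigns the remaining colors $c_{i+1},\ldots,c_{n_1+n_2-i}$ bijectively to the $n_1+n_2-2i$ singleton vertices. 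Summing over $i$ yields \eqref{eq:rel_rooks}.

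The argument is essentially bookkeeping, so there is no serious obstacle; the only point requiring a little care is verifying the correspondence between size-$2$ color classes and cells of $B$ under the matrix-coordinate convention used in the definition of $B$, and ensuring that the "distinct colors" count gives the coefficient of $m_{2^i 1^{n_1+n_2-2i}}$ rather than a multiple of it.
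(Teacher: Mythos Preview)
Your argument is correct. The paper does not actually prove this lemma; it is quoted as a known result of Stanley and Stembridge \cite[Remark~4.4]{StSt} and used as a black box, so there is no ``paper's proof'' to compare against. Your direct bijective count---observing that every color class in a proper coloring of a co-bipartite graph has size at most $2$, identifying the size-$2$ classes with cells of $B$ and hence the collection of such classes with a non-attacking rook placement, and then distributing the specified doubled and singleton colors---is exactly the natural proof and matches how the result is typically derived. The one cosmetic point you flag yourself (that the coefficient of a single monomial of type $2^i1^{n_1+n_2-2i}$ equals the coefficient of $m_{2^i1^{n_1+n_2-2i}}$) is handled correctly.
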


\begin{example} \label{ex: cobipartite}
Continuing with Example~\ref{ex: indifference graph and poset}, the Dyck path  $d=\sfn\sfn\sfn\sfe\sfe\sfn\sfn\sfe\sfe\sfe$ in Figure~\ref{fig:ex_path_graph} is abelian since $G(d)$ is co-bipartite with vertices $\{1,2\}\cup \{3,4,5\}$. We associate to $G(d)$ the Ferrers board $B_{22} \subset [2] \times [3]$. For this board we have $r_0=1,r_1=4,r_2=2$, and so by \eqref{eq:rel_rooks} we have that  $X_{G(d)} = 120 m_{11111} + 24 m_{2111} + 4m_{221}$.
\end{example}

\subsection{Stanley's nice property of chromatic symmetric functions}\label{sec:nice}

A graph $G$ is \emph{nice} if whenever $\lambda$ is in $\supp(X_G)$ and $\mu \preceq \lambda$, then $\mu$ is in $\supp(X_G)$. Stanley introduced this notion in \cite{StChrom2} and deduced the following properties.

\begin{proposition}[{Stanley \cite[Proposition 1.5]{StChrom2}}]
\label{prop:schur-nice}
If $G$ is $s$-positive, then $G$ is nice.
\end{proposition}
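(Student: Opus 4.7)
The plan is to chase a $\nu$ in the support back through the Schur expansion, then use the Kostka-dominance characterization to show that every $\mu \preceq \nu$ is also in the support.

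First I would fix notation: write the two expansions of $X_G$ simultaneously as
\[
X_G = \sum_{\lambda} c_\lambda s_\lambda = \sum_{\mu} a_\mu m_\mu,
\]
where $a_\mu \geq 0$ automatically (from the monomial definition of $X_G$) and $c_\lambda \geq 0$ by the hypothesis of $s$-positivity. Since the $m_\mu$ are symmetrized monomials, a partition $\mu$ lies in $\supp(X_G)$ (i.e. some, equivalently every, permutation of $\mu$ has nonzero coefficient) if and only if $a_\mu > 0$.

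Next I would combine the two expansions via $s_\lambda = \sum_\mu K_{\lambda,\mu} m_\mu$ to obtain
\[
a_\mu = \sum_{\lambda} c_\lambda K_{\lambda,\mu}.
\]
The essential input is the classical fact that $K_{\lambda,\mu} > 0$ if and only if $\mu \preceq \lambda$ in dominance order (in one direction this is immediate from the SSYT definition; in the other, one produces an SSYT of shape $\lambda$ and content $\mu$ by a greedy row-filling argument). Both signs being nonnegative, the sum $a_\mu$ is strictly positive precisely when there exists some $\lambda$ with $c_\lambda > 0$ and $\mu \preceq \lambda$.

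Now suppose $\nu \in \supp(X_G)$ and $\mu \preceq \nu$. By the observation above applied to $\nu$, there is a partition $\lambda$ with $c_\lambda > 0$ and $\nu \preceq \lambda$. Transitivity of dominance order gives $\mu \preceq \lambda$, so $K_{\lambda,\mu} > 0$, and hence
\[
a_\mu \geq c_\lambda K_{\lambda,\mu} > 0,
\]
which means $\mu \in \supp(X_G)$. This gives the niceness of $G$.

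There is no real obstacle here; the argument is essentially a bookkeeping exercise once one invokes the Kostka-dominance equivalence. The only place that merits a moment's thought is the identification of $\supp(X_G)$ (defined in the paper as a subset of $\mathbb{N}^k$) with the set of partitions $\mu$ for which $a_\mu > 0$, which follows from the $S_k$-symmetry of the support.
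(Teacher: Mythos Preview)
Your argument is correct and is exactly the standard proof of this fact. Note that the paper itself does not supply a proof; it merely cites the result as \cite[Proposition~1.5]{StChrom2}, so there is nothing to compare against beyond observing that your Kostka-dominance argument is the expected one.
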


To state the next result, we need the following definition. A graph $G$ is \emph{claw-free} if it does not have the claw graph $K_{1,3}$ as an induced subgraph.

\begin{proposition}[{Stanley \cite[Proposition 1.6]{StChrom2}}]
\label{prop:claw-nice}
A graph $G$ and all of its induced subgraphs are nice if and only if $G$ is claw-free.
\end{proposition}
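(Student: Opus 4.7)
The plan is to prove both implications separately. The easier direction is the contrapositive of the forward direction: if $G$ contains an induced claw $K_{1,3}$, I would show that $K_{1,3}$ itself fails to be nice. Coloring its center vertex with one color and its three leaves with a second color is proper and has weight $(3,1)$, so $(3,1)\in\supp(X_{K_{1,3}})$. Yet $(2,2)\preceq(3,1)$ is not in the support, because the center vertex is adjacent to every other vertex and so its color class has size exactly $1$ in any proper coloring, making weights all of whose parts are at least $2$ unattainable. Hence $K_{1,3}$ is an induced subgraph of $G$ that is not nice.

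For the reverse direction, induced subgraphs of claw-free graphs are claw-free, so it suffices to show that any claw-free graph $G$ is nice. I would use a Kempe-chain color swap. Let $f$ be a proper coloring with weight $\lambda=\wt(f)\in\supp(X_G)$ and let $\mu\preceq\lambda$ be another partition of the same size. Since the dominance order on partitions of $n$ is generated by its covering relations, each of which has the form $\lambda'=\lambda-e_i+e_j$ with $i<j$, it suffices to realize a single such covering step. Pick colors $a,b$ whose color classes $C_a=f^{-1}(a)$ and $C_b=f^{-1}(b)$ have sizes $\lambda_i$ and $\lambda_j$, respectively, and set $H=G[C_a\cup C_b]$.

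The crux is the structural observation that $H$ is bipartite (each of $C_a,C_b$ is independent in $G$) and claw-free, and that any vertex $v\in C_a$ with three neighbors in $C_b$ would, together with those pairwise non-adjacent neighbors, induce a $K_{1,3}$ in $G$. Hence $H$ has maximum degree $2$, so it is a disjoint union of paths and even cycles. Even paths and even cycles contribute equally to $|C_a|$ and $|C_b|$, while an odd path contributes one extra vertex of its majority color; since $|C_a|-|C_b|=\lambda_i-\lambda_j\geq 2>0$, some odd path $P$ in $H$ has color $a$ as its majority. Swapping colors $a$ and $b$ along $P$ remains a proper coloring of $G$: adjacency inside $P$ is preserved under the swap, and any neighbor of a vertex in $P$ lying outside $P$ must be colored neither $a$ nor $b$ (otherwise it would lie in $H$ in the same component as $P$). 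The new coloring has weight equal to $\lambda'$ after sorting, and iterating this procedure realizes every $\mu\preceq\lambda$ in the support.

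The main obstacle is the bipartite-plus-claw-free-implies-maximum-degree-two lemma, which is the structural fact that powers the Kempe flip; the remaining work is bookkeeping with the covering relations of the dominance order and checking that the local swap does not introduce conflicts with the rest of the coloring.
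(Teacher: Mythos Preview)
The paper does not actually supply a proof of this proposition: it is quoted verbatim as Stanley's result \cite[Proposition~1.6]{StChrom2} and used as a black box. So there is no ``paper's own proof'' to compare against.

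Your argument is correct and is essentially Stanley's original proof. The forward direction is handled exactly as it should be: the claw itself witnesses failure of niceness because the center forces a singleton color class, blocking weight $(2,2)$ while $(3,1)$ is attainable. For the converse, the Kempe-chain argument is the right tool, and your key structural step---that the two-colored induced subgraph $H=G[C_a\cup C_b]$ has maximum degree at most $2$ because a degree-$3$ vertex in it would create an induced claw in $G$---is precisely the point where claw-freeness enters. The rest (decomposing $H$ into paths and even cycles, finding an odd path with majority color $a$ when $|C_a|>|C_b|$, and checking the swap introduces no new conflicts) is standard and you have it right.

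One small remark: you assert $\lambda_i-\lambda_j\ge 2$, which is indeed automatic whenever $\lambda-e_i+e_j$ (with $i<j$) is again a partition, since in that case $\lambda_i-1\ge\lambda_{i+1}\ge\cdots\ge\lambda_{j-1}\ge\lambda_j+1$. You use this implicitly to guarantee that after the swap the sorted weight really is $\lambda'$ rather than $\lambda$ again; it might be worth saying so explicitly, but the logic is sound.
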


The following families of graphs are known to have $s$-positive chromatic symmetric functions:

\begin{compactitem}
\item[(i)] co-bipartite graphs \cite[Corollary 3.6]{StChrom1} (or incomparability graphs of 3-free posets),
\item[(ii)] indifference graphs of Dyck paths, i.e. incomparability graphs of unit interval orders (or (3+1)- and (2+2)-free posets)  \cite{StSt}, and
\item[(iii)] incomparability graphs of (3+1)-free posets \cite{G2}.
\end{compactitem}
Note that families (i) and (ii) are contained in (iii).

\section{Chromatic symmetric functions of co-bipartite graphs} 
\label{sec: case co-bipartite graphs}

Let \(G\) be a co-bipartite graph with $n$ vertices, not necessarily an indifference graph of a Dyck path. Stanley \cite[Corollary 3.6]{StChrom1} showed that \(X_G\) is $e$-positive and thus $s$-positive.

By Lemma~\ref{lem:StanleyStembridgeCobipartite} the expansion of \(X_G\) in the monomial basis is
\begin{equation} \label{eq: monomial expansion co-bipartite}
X_G = \sum_{i = 0}^{\lfloor n/2\rfloor } c^G_{2^i1^{n-2i}} m_{2^i1^{n-2i}},
\end{equation}
with some coefficients \(c^G_{2^i1^{n-2i}}\) possibly \(0\). Let $\lambda(G)=2^j1^{n-2j}$, where $j$ is maximal such that $c^G_{2^j1^{n-2j}}\neq 0$. Next, we show that  \(X_G\) is SNP. 

\begin{proposition}
\label{co-bipartite-m-convex}
If $G$ is a graph with $n$ vertices and its complement \(\overline{G}\) is bipartite, then \(X_G(x_1,\ldots,x_k)\) is SNP and its Newton polytope is $\mathcal{P}^{(k)}_{\lambda(G)}$.
\end{proposition}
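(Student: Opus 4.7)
The plan is to combine three ingredients: the monomial expansion~\eqref{eq: monomial expansion co-bipartite}, Stanley's nice property for $s$-positive graphs (Proposition~\ref{prop:schur-nice}), and Rado's description of the Newton polytope of a Schur polynomial (Theorem~\ref{thm: Schurs and P_lambda}). The proof splits into the two inclusions $\supp(X_G) \subseteq \mathcal{P}^{(k)}_{\lambda(G)} \cap \mathbb{Z}^k$ and its reverse, and the two uses of Rado's theorem are what translate the combinatorial facts about supports into the desired statement about lattice points of a permutahedron.

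First, I would use~\eqref{eq: monomial expansion co-bipartite} to bound $\supp(X_G)$ from above: every exponent vector appearing in $X_G$ is a permutation of some partition $2^i 1^{n-2i}$, and by the maximality of $j$ only those with $i \le j$ can occur. An elementary partial-sum check shows that, among partitions of $n$ of this special shape, $2^i 1^{n-2i} \preceq 2^j 1^{n-2j}$ is equivalent to $i \le j$ (the first-coordinate inequality $\mu_1 \le 2$ already forces $\mu$ to consist only of $1$s and $2$s, and the number of $2$s is then bounded by $j$). Consequently $\supp(X_G)$ is contained in the set of vectors in $\mathbb{N}^k$ whose sorted form is some partition dominated by $\lambda(G)$, and Rado's theorem identifies this set with $\mathcal{P}^{(k)}_{\lambda(G)} \cap \mathbb{Z}^k$.

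For the reverse inclusion, the partition $\lambda(G)$ itself lies in $\supp(X_G)$ by the definition of $j$. Since $G$ is co-bipartite, Stanley showed that $X_G$ is in fact $e$-positive and hence $s$-positive, so Proposition~\ref{prop:schur-nice} gives that $G$ is nice. Therefore every partition $\mu \preceq \lambda(G)$ lies in $\supp(X_G)$, and by the symmetry of $X_G$ so does every permutation of such a $\mu$ that fits in $\mathbb{N}^k$. A second application of Rado's theorem shows this set is exactly $\mathcal{P}^{(k)}_{\lambda(G)} \cap \mathbb{Z}^k$, which simultaneously yields the SNP property and the identification of the Newton polytope. No serious obstacle arises: essentially all the content is imported from the nice property and from Rado's theorem, and the only genuinely new verification is the elementary dominance comparison among partitions of the form $2^i 1^{n-2i}$.
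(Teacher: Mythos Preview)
Your proof is correct and follows essentially the same approach as the paper's: both use the total ordering by dominance of the partitions $2^i1^{n-2i}$, the nice property, and Rado's theorem to identify the support with $\mathcal{P}^{(k)}_{\lambda(G)}$. The only minor difference is that you invoke niceness via $s$-positivity (Proposition~\ref{prop:schur-nice}), whereas the paper instead observes directly that a co-bipartite graph is claw-free and applies Proposition~\ref{prop:claw-nice}.
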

\begin{proof}
Since \(G\) is a union of two cliques and edges between the cliques, \(G\) is claw-free, and so is nice via Proposition~\ref{prop:claw-nice}.
 The partitions \(2^i1^{n-2i}\) appearing in the monomial expansion in \eqref{eq: monomial expansion co-bipartite} are totally ordered by dominance, so there is a unique maximal \(\lambda\) such that \(c_\lambda \neq 0\), and this is exactly $\lambda(G)$. Since \(G\) is nice, the support of \(X_G\) is the same as the support of \(s_{\lambda(G)}\), which by Theorem~\ref{thm: Schurs and P_lambda} is $\mathcal{P}^{(k)}_{\lambda(G)}$.
\end{proof}

\begin{example}
Consider the co-bipartite graph $G$ with vertices $\{1,2\}\cup\{3,4\}$ and edges $\{(1,2),(3,4),(2,3)\}$. Its chromatic symmetric function is $X_G = 24m_{1111} +6m_{211} + 2m_{22}$, and $\Newton(X_{G(d)}(x_1,x_2,x_3,x_4))=\mathcal{P}^{(4)}_{22}$. 
\end{example}

\section{Chromatic symmetric functions of Dyck paths} 
\label{sec: case dyck paths}

Recall that any graph can be colored with a \emph{greedy coloring} relative to a fixed ordering on the vertices. Given a Dyck path $d$, let $\lambda^{\gr}(d)$ be the weight \(\wt(\gr)\) of the greedy coloring on the indifference graph $G(d)$.

\begin{theorem}\label{thm:p_lambda}
Let $d$ be a Dyck path. Then $X_{G(d)}(x_1,\ldots,x_k)$ is SNP and its Newton polytope is $\mathcal{P}^{(k)}_{\lambda^{\gr}(d)}$.
\end{theorem}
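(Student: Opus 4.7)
The plan is to sandwich $\supp(X_{G(d)}(x_1,\ldots,x_k))$ between the orbit of $\lambda^{\gr}(d)$ from below and the lattice points of $\mathcal{P}^{(k)}_{\lambda^{\gr}(d)}$ from above, and to invoke Rado's theorem (Theorem~\ref{thm: Schurs and P_lambda}) to identify those lattice points with the compositions $\mu \in \mathbb{N}^k$ of size $|\lambda^{\gr}(d)|$ whose decreasing rearrangement $\mu^{\downarrow}$ is dominated by $\lambda^{\gr}(d)$. Both inclusions then reduce to statements at the level of sorted weights.

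For the inclusion $\mathcal{P}^{(k)}_{\lambda^{\gr}(d)}\cap\mathbb{Z}^k\subseteq \supp(X_{G(d)}(x_1,\ldots,x_k))$, the greedy coloring is by construction a proper coloring of $G(d)$ of weight $\lambda^{\gr}(d)$, so $\lambda^{\gr}(d)\in\supp(X_{G(d)})$. Since $X_{G(d)}$ is $s$-positive (family (ii) of Section~\ref{sec:nice}), Proposition~\ref{prop:schur-nice} shows that $G(d)$ is nice, so every partition dominated by $\lambda^{\gr}(d)$ also lies in $\supp(X_{G(d)})$; symmetry of $X_{G(d)}$ then spreads each such partition over all of its permutations in $\mathbb{N}^k$, yielding the containment.

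The substance of the proof is the reverse inclusion, which reduces to showing that $\wt(f)^{\downarrow}\preceq\lambda^{\gr}(d)$ for every proper coloring $f$ of $G(d)$. This is Lemma~\ref{lemma greedy is max}, attributed in the excerpt to an unpublished note of Chow \cite{TC}. I would prove the equivalent combinatorial statement that for every $j\geq 1$, the union $\gr^{-1}(1)\sqcup\cdots\sqcup\gr^{-1}(j)$ has maximum cardinality among all unions of $j$ independent sets of $G(d)$---equivalently, by Remark~\ref{indiff graph to incomp graph}, among all unions of $j$ antichains in the underlying unit interval order. The greedy algorithm on the natural vertex ordering $1<2<\cdots<n$ iteratively extracts greedy (leftmost-first) independent sets, which for interval graphs are known to be of maximum size. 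I would then proceed by induction on $j$ via an exchange argument adapted to the Hessenberg function $h_d$: any proper coloring whose first $j$ color classes strictly beat greedy's partial sum would, after a vertex swap with the greedy coloring, produce an independent set contradicting the greedy choice at some earlier stage.

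The main obstacle is precisely this Greene--Kleitman-flavored maximality lemma; it is not a formal consequence of $s$-positivity or niceness, and it is the place where the unit-interval (i.e.\ (2+2)-free) structure of $G(d)$ enters essentially, via the ``consecutive earlier neighbors'' property of the indifference graph that forces each greedy color class to be an interval-like antichain. Once the lemma is in hand, combining the two inclusions with Rado's description immediately yields $\supp(X_{G(d)}(x_1,\ldots,x_k))=\mathcal{P}^{(k)}_{\lambda^{\gr}(d)}\cap\mathbb{Z}^k$, which is the SNP property together with the identification of the Newton polytope.
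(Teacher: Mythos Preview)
Your proposal follows essentially the same two-inclusion strategy as the paper, with the same key lemma (Lemma~\ref{lemma greedy is max}) doing the real work. Two minor differences worth noting: you obtain niceness of $G(d)$ from $s$-positivity via Proposition~\ref{prop:schur-nice}, whereas the paper uses the claw-free route via Proposition~\ref{prop:claw-nice}; both are valid, and the claw-free argument is more direct since it avoids invoking Gasharov's result. For the lemma itself, your sketch frames the induction on the number $j$ of color classes with a vertex-exchange, while the paper fixes the number $k$ of colors and inducts on the vertex position $j\in[n]$, arguing that some $k$-maximal coloring agrees with $\gr$ on the first $j$ vertices (within colors $[k]$) via a color-swap on the tail; your exchange idea points in the right direction, but the paper's vertex-position induction with the tail color-swap is the clean way to make the Hessenberg structure bite.
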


\begin{example} \label{ex: running ex1}
For the Dyck path $d=\sfn\sfn\sfn\sfe\sfe\sfn\sfn\sfe\sfe\sfe$ in Figure~\ref{fig:ex_path_graph}, we have that $\lambda^{\gr}(d)=(2,2,1)$,  $X_{G(d)} = 120 m_{11111} + 24 m_{2111} + 4m_{221}=36s_{11111} +  16s_{2111} + 4s_{221}$, and $\Newton(X_{G(d)}(x_1,\ldots,x_k))=\mathcal{P}^{(k)}_{221}$ (see Figure~\ref{fig:ex newton polytope}).
\end{example}

\begin{corollary} \label{cor: M-convex for Dyck paths}
Let \(d\) be a Dyck path. Then \(\supp(X_{G(d)}(x_1,\ldots,x_k))\) is M-convex.
\end{corollary}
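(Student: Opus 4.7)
The plan is to chain together the theorem just proved with two general facts about permutahedra. By Theorem~\ref{thm:p_lambda}, the polynomial $X_{G(d)}(x_1,\ldots,x_k)$ is SNP, so its support equals $\mathcal{P}^{(k)}_{\lambda^{\gr}(d)} \cap \mathbb{Z}^k$. The permutahedron $\mathcal{P}^{(k)}_{\lambda^{\gr}(d)}$ is a (standard, and in particular integral) generalized permutahedron in the sense of Postnikov \cite{PosGP}.

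Next, I would invoke \cite[Theorem 1.9]{M03} (already cited in the background), which asserts that the set of lattice points of any integral generalized permutahedron forms an M-convex set. Applying this to $\mathcal{P}^{(k)}_{\lambda^{\gr}(d)}$ gives that its lattice points are M-convex, and combining with the identification of this lattice-point set with $\supp(X_{G(d)}(x_1,\ldots,x_k))$ from the previous sentence yields the conclusion.

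There is essentially no obstacle here: the corollary is immediate from Theorem~\ref{thm:p_lambda} together with the cited fact that integral generalized permutahedra have M-convex lattice-point sets. The only thing to be slightly careful about is the edge case $k < \ell(\lambda^{\gr}(d))$, where the permutahedron is empty by convention and hence the support is empty, which is vacuously M-convex.
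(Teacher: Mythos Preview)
Your proposal is correct and follows essentially the same approach as the paper: invoke Theorem~\ref{thm:p_lambda} to identify the support with the lattice points of the permutahedron $\mathcal{P}^{(k)}_{\lambda^{\gr}(d)}$, then use that (generalized) permutahedra have M-convex lattice-point sets. The paper phrases the second step as ``the support of a polynomial $p$ is M-convex if and only if $p$ is SNP and its Newton polytope is a permutahedron,'' whereas you unpack it via \cite{PosGP} and \cite[Theorem~1.9]{M03} and add the empty edge case, but the content is the same.
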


\begin{proof}
The result follows by Theorem~\ref{thm:p_lambda} and the fact that the support of a homogeneous polynomial $p$ with nonnegative coefficients is M-convex if and only if $p$ is SNP and its Newton polytope is a generalized permutahedron.
\end{proof}

\begin{figure} 
\begin{center}
\raisebox{10pt}{\includegraphics[scale=0.3]{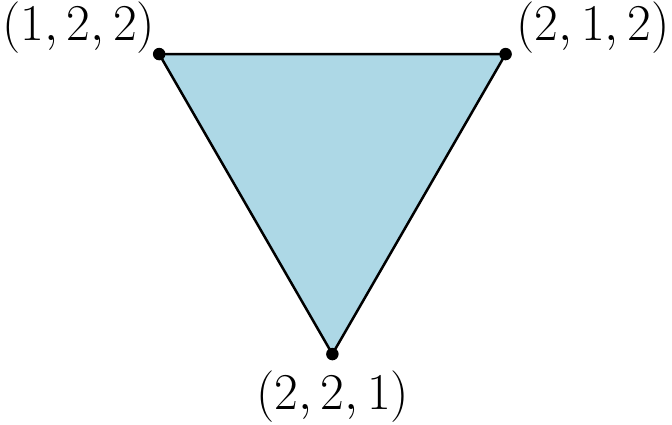}}
\qquad \qquad
\includegraphics[scale=0.3]{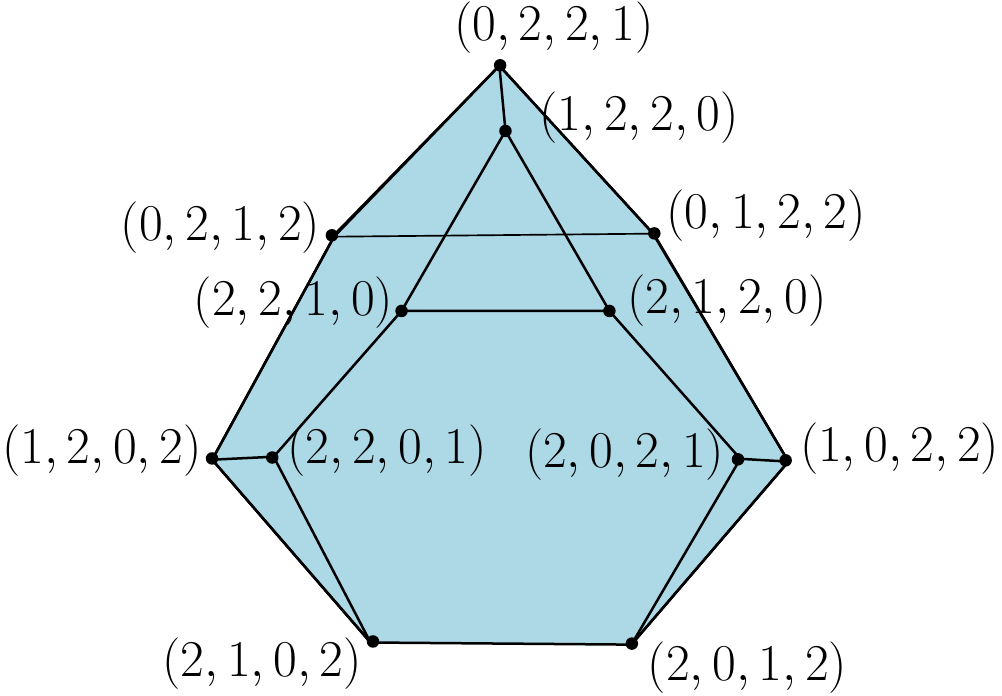}
\end{center}
\caption{The Newton polytopes of $X_{G(d)}(x_1,x_2,x_3)$ and $X_{G(d)}(x_1,x_2,x_3,x_4)$ for $d=\sfn \sfn \sfn \sfe \sfe \sfn \sfn \sfe \sfe \sfe$ are the permutahedra $\mathcal{P}_{221}^{(3)}$ and $\mathcal{P}^{(4)}_{221}$, respectively.}\label{fig:ex newton polytope}
\end{figure}

\begin{proof}[Proof of Theorem~\ref{thm:p_lambda}]
Since $G(d)$ is claw-free,  Proposition~\ref{prop:claw-nice} asserts that $G(d)$ is nice. This means that if a partition \(\lambda\) is in \(\supp(X_{G(d)})\), then \[\supp(s_{\lambda}(x_1, \ldots, x_k)) = \mathcal{P}^{(k)}_{\lambda}  \subset \supp(X_{G(d)}(x_1,\ldots,x_k)).\] In particular \(\mathcal{P}^{(k)}_{\lambda^{\gr}(d)}  \subset \supp(X_{G(d)}(x_1,\ldots,x_k))\). By  Lemma~\ref{lemma greedy is max} below, the reverse inclusion holds and the result follows.
\end{proof}

\begin{remark}
In \cite[Proposition 2.5 III]{MTY}, Monical--Tokcan--Yong generalize the strategy we use here as a general lemma to give a criterion for a symmetric function to be SNP and have a Newton polytope which is a permutahedron.
\end{remark}

\subsection{Greedy coloring on Dyck paths}

For an indifference graph \(G(d)\) on \([n]\), we can describe the greedy coloring algorithm using the Dyck path \(d\). 

\begin{definition}[bounce path coloring]
Let \(G(d)\) be the indifference graph of a Dyck path \(d\), and let \(h_d\) be the associated Hessenberg function. The {\em bounce path coloring} of $G(d)$ is defined as follows. For each color \(i\) in order, select the vertices which will be colored \(i\) by the following procedure:
Start at the first uncolored vertex \(j\), and color it \(i\). Set \(j\) to the first uncolored vertex greater than \(h_d(j)\), color it \(i\), and repeat until the end of the graph is reached.
\end{definition}

\begin{proposition}
Let $G(d)$ be the indifference graph of a Dyck path $d$.  Then the bounce path coloring is the greedy coloring of $G(d)$.
\end{proposition}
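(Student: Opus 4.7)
The plan is to show that the bounce path coloring coincides with the greedy coloring (of $G(d)$ with vertices $[n]$ processed in natural order) by verifying the two properties that characterize the greedy coloring uniquely: (a) it is a proper coloring of $G(d)$, and (b) for every vertex $v \in [n]$, the color assigned to $v$ is the smallest positive integer not used on any lower-indexed neighbor of $v$. A key preliminary observation used throughout is that, since $h_d$ is non-decreasing, the set $N^{-}(v) = \{i < v : h_d(i) \geq v\}$ of lower-indexed neighbors of $v$ forms a consecutive interval $\{i^{*}, i^{*}+1, \ldots, v-1\}$, where $i^{*}$ is the smallest index with $h_d(i^{*}) \geq v$.

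For (a), I would verify that any two vertices $j_1 < j_2$ assigned the same color $c$ are non-adjacent. Both are colored in the single bounce path pass for color $c$, and each pair of consecutively colored vertices $j_k, j_{k+1}$ in that pass satisfies $j_{k+1} > h_d(j_k)$ by construction. Telescoping via monotonicity of $h_d$ yields $j_2 > h_d(j_1)$, so $j_1$ and $j_2$ are non-adjacent in $G(d)$ by Remark~\ref{indiff graph to incomp graph}.

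For (b), I would argue by contradiction: assume there is a color $c$ strictly smaller than the bounce color of $v$ that is not used on any vertex of $N^{-}(v)$. Since the bounce color of $v$ exceeds $c$, vertex $v$ remains uncolored throughout the pass for color $c$. Let $j$ be the largest vertex below $v$ assigned color $c$ in this pass, if such a $j$ exists. (If none exists, then the first uncolored vertex at the start of the pass is already $v$, forcing the bounce color of $v$ to be $c$, a contradiction.) By hypothesis $j \notin N^{-}(v)$, so $j < i^{*}$, and hence $h_d(j) < v$ by monotonicity. The next bounce step after $j$ picks the first uncolored vertex exceeding $h_d(j)$; since $v$ itself is uncolored and $v > h_d(j)$, this step lands either on $v$, contradicting that the bounce color of $v$ is not $c$, or on some $u$ with $j < u < v$, contradicting the maximality of $j$.

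Properties (a) and (b) together recover the defining rule of the greedy coloring vertex-by-vertex, so the two colorings coincide. The main obstacle will be the contradiction in (b): one must use monotonicity of $h_d$ precisely to locate $j$ strictly below $i^{*}$ and to conclude $h_d(j) < v$, ensuring the next bounce step cannot leap past $v$, while also cleanly disposing of the degenerate case in which no vertex below $v$ is ever assigned color $c$.
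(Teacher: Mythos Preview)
Your proof is correct. The paper takes a slightly different route: rather than verifying the greedy property vertex-by-vertex, it first observes (for any graph and any vertex ordering) that the color classes of the greedy coloring can be computed \emph{color-by-color}: the vertices of color $1$ are obtained by repeatedly adjoining the next vertex not adjacent to anything already in the list, then one removes those vertices and repeats for color $2$, and so on. It then uses monotonicity of $h_d$ to reduce ``not adjacent to anything in the current list'' to ``not adjacent to the last vertex of the list,'' which is exactly the bounce-path rule $j \mapsto$ first uncolored vertex exceeding $h_d(j)$. Your approach instead checks directly that the bounce coloring assigns each vertex the minimum color unused on $N^{-}(v)$; this avoids the preliminary re-description of the greedy algorithm and makes the role of the interval structure of $N^{-}(v)$ more explicit, at the cost of a short case analysis. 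Both arguments hinge on the same monotonicity fact, so the difference is one of packaging rather than substance.
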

\begin{proof}
In a greedy coloring, the set of vertices colored $1$ can be found by iteratively building a list, starting with the first vertex, and adding any vertex that is not adjacent to any vertex in the list. Not considering any of the vertices on this list, we can repeat the process to find the vertices colored $2$, and so on. In an indifference graph $G(d)$, the process can be simplified: if the list of vertices for color $i$ during the iteration is  $S_i=\{v_1,\ldots,v_k\}$, then a later vertex  $v$ is not adjacent to any $v_i$ if and only if $v$ is not adjacent to $v_k$. The latter is true if and only if $h_d(v_k)< v$. Thus, the final list $S_i$ is determined by the bounce path coloring construction.
\end{proof}

\begin{remark} \label{rem: description greedy coloring with bounce path}
For a vertex $j$, the vertex $h_d(j)$ is the next vertex hit by a bounce path on $d$ starting at $j$. Thus, the greedy coloring defined above can be viewed in the Dyck path $d$ as follows. Starting at the bottom left corner of $d$, do a bounce path and color the vertices the path visits (when it bounces off the diagonal) with color $1$. Then, start another bounce path before the first uncolored vertex. If the path visits a colored vertex on the diagonal, then the path follows the diagonal until it bounces off before the next uncolored vertex. Color the vertices visited when the path bounces off the diagonal with color $2$, and so on.

Note that \(\lambda^{\gr}(d)_1\) is the number of bounce points of the bounce path of $d$, excluding $(n,n)$.
\end{remark}

\begin{remark}\label{rem:gers}
In the process of determining the closure order on nilpotent orbits in type $A$, Gerstenhaber gave an algorithm to determine $\lambda^{\gr}(d)$ \cite{Gerstenhaber}.  We point to \cite[Section 6]{FennSommers} for a modern description of the algorithm and further properties of $\lambda^{\gr}(d)$. 
\end{remark}

\begin{figure}
\includegraphics[scale=0.8]{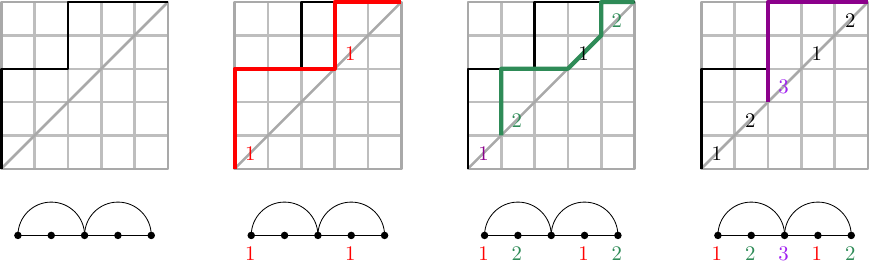}
\caption{Description of bounce path algorithm to determine the greedy coloring  weight $(2,2,1)$.}\label{fig: bouncepathalg}
\end{figure}

\begin{proposition}
The greedy coloring weight $\lambda^{\gr}(d)$ is a partition; i.e. it is a sorted weight vector. 
\end{proposition}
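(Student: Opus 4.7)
The plan is to prove the stronger pointwise statement that, writing the color classes produced by the bounce path coloring as $S_i = \{u^{(i)}_1 < u^{(i)}_2 < \cdots < u^{(i)}_{r_i}\}$, one has $u^{(i)}_k < u^{(i+1)}_k$ whenever both vertices are defined; in fact, whenever $u^{(i+1)}_k$ exists, $u^{(i)}_k$ must also exist. Such a pointwise dominance forces $r_i \geq r_{i+1}$ for all $i$, i.e., $\lambda^{\gr}(d)_i \geq \lambda^{\gr}(d)_{i+1}$, which is the partition condition. The argument proceeds by induction on $k$.

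The base case $k=1$ is immediate: $u^{(i)}_1$ is the smallest vertex left uncolored by rounds $1, \ldots, i-1$, and this vertex is then colored in round $i$, so the smallest vertex still uncolored after round $i$—namely $u^{(i+1)}_1$—must be strictly larger. For the inductive step, suppose $u^{(i)}_k < u^{(i+1)}_k$ and that $u^{(i+1)}_{k+1}$ exists. By the bounce-path rule and weak monotonicity of $h_d$,
\[
u^{(i+1)}_{k+1} > h_d(u^{(i+1)}_k) \geq h_d(u^{(i)}_k),
\]
and $u^{(i+1)}_{k+1}$ is uncolored before round $i$ (being uncolored before round $i+1$). Hence $u^{(i+1)}_{k+1}$ is an eligible candidate in the recursion defining $u^{(i)}_{k+1}$, which forces $u^{(i)}_{k+1}$ to exist and satisfy $u^{(i)}_{k+1} \leq u^{(i+1)}_{k+1}$. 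Equality is excluded because a single vertex cannot lie in two color classes at once.

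There is no serious obstacle beyond careful bookkeeping of which vertices remain uncolored entering round $i$ versus round $i+1$. The only properties actually used are the bounce-path description of the greedy coloring and the weak monotonicity of the Hessenberg function $h_d$; no deeper structure of indifference graphs of Dyck paths is invoked at this step. In particular, the same argument would yield a partition-shaped greedy weight for any graph whose adjacency is governed by a weakly increasing function in this way.
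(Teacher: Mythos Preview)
Your proof is correct and follows essentially the same approach as the paper: both argue that the $k$th vertex colored in round $i$ precedes the $k$th vertex colored in round $i+1$ (the paper phrases this in terms of ``bounce points''), and conclude that $|S_i| \geq |S_{i+1}|$. The paper asserts this pointwise inequality with a terse ``and so on'', whereas you supply the explicit induction on $k$ together with the monotonicity of $h_d$ that makes the step go through; so your version is a rigorous expansion of the paper's sketch rather than a different argument.
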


\begin{proof}
Consider the bounce points of each color's bounce paths. Since the bounce path for color \(i\) starts before the bounce path for color \(i+1\), the first bounce point for color \(i\) is before the first bounce point for color \(i+1\), and so the second bounce point for color \(i\) is before the second bounce point for color \(i+1\), and so on. Thus the total number of bounce points for color \(i\) is at least the total number of bounce points for color \(i+1\), and therefore \(\lambda^{\gr}(d)_i \geq \lambda^{\gr}(d)_{i+1}\).
\end{proof}

\begin{example}
Continuing with Example~\ref{ex: running ex1}, the bounce path greedy coloring of $G(d)$ for $d=\sfn\sfn\sfn\sfe\sfe\sfn\sfn\sfe\sfe\sfe$  is illustrated in  Figure~\ref{fig: bouncepathalg}. The greedy coloring weight is $\lambda^{\gr}(d)=(2,2,1)$.
\end{example}

\subsection{Greedy coloring gives dominating partition}

In this section, we show that the greedy coloring weight dominates the weight of any other coloring.

\begin{lemma} \label{lemma greedy is max}
Given a Dyck path $d$, let $X_{G(d)} = \sum_{\lambda} c^d_{\lambda} m_{\lambda}$.  If $c^d_{\lambda} \neq 0$ for some $\lambda$, then $\lambda \preceq \lambda^{\gr}(d)$ in dominance order.
\end{lemma}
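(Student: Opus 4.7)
The plan is to identify proper colorings of $G(d)$ with chain decompositions of the unit interval order $P$ associated to $d$ (where $i <_P j$ iff $h_d(i) < j$), since each color class is an independent set in $G(d)$, equivalently a chain in $P$. If $f\colon [n]\to \mathbb{N}$ is a proper coloring and we sort its color classes in decreasing size with sizes $\mu_1 \geq \mu_2 \geq \cdots$, then $\mu_1 + \cdots + \mu_k$ is the total size of some family of $k$ pairwise-disjoint chains in $P$, hence is at most the maximum size $\pi_k(P)$ of a union of $k$ disjoint chains in $P$ (the $k$-th Greene invariant). Thus $\mu \preceq \lambda^{\gr}(d)$ will follow once we show
\[
\pi_k(P) \;=\; \lambda_1^{\gr}(d) + \cdots + \lambda_k^{\gr}(d)
\]
for every $k$; that is, the greedy bounce-path chain decomposition of $P$ realizes Greene's invariants.

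The argument proceeds by induction on $n$. The base case $k=1$ amounts to showing that $C_1^{\gr} = \{v_1 < v_2 < \cdots\}$, with $v_1 = 1$ and $v_{j+1} = h_d(v_j)+1$, is a longest chain in $P$. Given any chain $u_1 < u_2 < \cdots < u_\ell$ in $P$ (so $h_d(u_j) < u_{j+1}$), induction on $j$ together with the monotonicity of $h_d$ yields $v_j \leq u_j$ for all $j$, hence $|C_1^{\gr}| \geq \ell$, so $|C_1^{\gr}| = \pi_1(P)$.

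For the inductive step, let $P' = P \setminus C_1^{\gr}$. Then $P'$ is again (3+1)- and (2+2)-free, and so it is the unit interval order of some Dyck path $d'$ of smaller length; moreover, the greedy coloring of $P'$ coincides with the greedy coloring of $P$ restricted to $P'$ with colors shifted by one, so $\lambda^{\gr}(d') = (\lambda_2^{\gr}(d), \lambda_3^{\gr}(d), \ldots)$. By the inductive hypothesis applied to $P'$, we have $\pi_j(P') = \lambda_2^{\gr}(d) + \cdots + \lambda_{j+1}^{\gr}(d)$. It thus remains to prove
\[
\pi_{k+1}(P) \;=\; |C_1^{\gr}| + \pi_k(P').
\]
The inequality $\geq$ is immediate by taking $C_1^{\gr}$ together with an optimal family of $k$ disjoint chains in $P'$. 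For $\leq$, we need an exchange lemma: given any $k+1$ disjoint chains $D_1, \ldots, D_{k+1}$ in $P$, they can be modified into $k+1$ disjoint chains of at least the same total size which include $C_1^{\gr}$ itself. Applying the lemma and dropping $C_1^{\gr}$ leaves $k$ disjoint chains in $P'$, whose total is bounded by $\pi_k(P')$.

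The main obstacle is the exchange lemma. The idea is to process $v_1, v_2, \ldots$ of $C_1^{\gr}$ in order: whenever $v_j \notin \bigcup_i D_i$, we locate a vertex $w$ in some $D_i$ that can be swapped for $v_j$, using that $v_j$ is the leftmost feasible extension of $v_1 < \cdots < v_{j-1}$; the monotonicity of $h_d$ ensures that this swap propagates through the affected chain without loss. It is precisely this step that uses the (2+2)-freeness of $P$, which is why the argument is specific to indifference graphs of Dyck paths; the more general (3+1)-free case (Theorem~\ref{thm:p_lambda 3+1 free}) is handled by the different techniques indicated in the introduction.
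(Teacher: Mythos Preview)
Your approach via Greene's chain invariants $\pi_k(P)$ and induction on $|P|$ is a genuinely different route from the paper's proof. The paper argues directly: it fixes $k$, starts from an arbitrary $k$-maximal proper coloring $f$, and shows by induction on the vertex position $j$ that $f$ can be modified---by swapping two colors in $[k]$ on the tail $\{j+1,\ldots,n\}$---so as to agree with $\gr$ on the first $j$ vertices among those colored in $[k]$. At $j=n$ this gives $f^{-1}(c)\subseteq\gr^{-1}(c)$ for each $c\le k$, so $\gr$ is itself $k$-maximal. There is no passage to a smaller poset and no exchange lemma.

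Your reduction to $P'=P\setminus C_1^{\gr}$ is sound (that $P'$ is again a unit interval order, and that greedy on $P'$ equals greedy on $P$ shifted by one color, are both correct and not hard to check), but the entire weight of the argument now rests on the exchange lemma, and your sketch is not a proof. Two concrete gaps: (i) you only say what to do when $v_j\notin\bigcup_i D_i$, but to obtain a family with $D_1'=C_1^{\gr}$ you must also handle $v_j\in D_i$ for $i\neq 1$ and ensure no vertex outside $C_1^{\gr}$ ends up in $D_1'$; (ii) ``locate a $w$ that can be swapped for $v_j$ and let the swap propagate'' is not an algorithm---which $w$, and why do all affected $D_i$ remain chains afterwards? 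Once you make this precise you will find yourself performing exactly the tail-swap the paper uses (swap the two relevant colors on $\{j+1,\ldots,n\}$, using that any earlier vertex of either color has $h_d$-value $<j+1$). In other words, your exchange lemma is equivalent in content to the statement being proved, and the induction on $|P|$ is scaffolding that does not purchase a simplification; the paper's argument is shorter precisely because it performs that swap directly for each $k$ without peeling off $C_1^{\gr}$ first.
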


\begin{remark}\label{rem:coloringlemma}
While preparing the current paper, the authors learned that this result, and a similar proof, were also known to Tim Chow in an
unpublished note \cite{TC} (where he calls the greedy coloring the {\em first-fit coloring}).
\end{remark}

\begin{proof}[Proof of Lemma~\ref{lemma greedy is max}]
For each \(k\) in $[n]$, it suffices to show that
\begin{equation} \label{eq: wts}
\sum_{i=1}^k \lambda^{\gr}(d)_i = \max_{f \text{ proper}} \sum_{i=1}^k \wt(f)_i.
\end{equation}

We say a proper coloring $f$ is \emph{$k$-maximal} if  $\sum_{i=1}^k \wt(f)_i$ is maximal among all proper colorings. Our strategy is as follows: we fix $k$ in $[n]$ and show by induction on $j\geq 0$ that for all \(j\) in \([n]\) there exists a \(k\)-maximal coloring \(f\) such that

\begin{itemize}
    \item[(*)] $f(i)=\gr(i)$ for all vertices $i$ in $[j]$ such that $f(i)$ is in $[k]$.
\end{itemize}
Equation \eqref{eq: wts} then follows from (*) since when \(j = n\), we see that the greedy coloring must also be \(k\)-maximal. Since this holds for all $k$ in $[n]$, the greedy coloring is maximal in dominance order.

The base case $j=0$ is true since $k$-maximal colorings exist and condition (*) is vacuously true. Next, suppose that we have a $k$-maximal coloring \(f\) which satisfies condition (*) for some $j\geq 0$. 

Consider the vertex
\(j+1\). If \(f(j+1)\) is not in \([k]\) or \(f(j+1) = \gr(j+1)\), then \(f\) also satisfies (*) for \(j+1\), so we are done.

Otherwise if $f(j+1)$ is in $[k]$ and $f(j+1)\neq \gr(j+1)$, we claim that $\gr(j+1)$ is also in $[k]$. To see this, it is enough to show that 
\[
  \gr(j+1) < f(j+1).
\]
This inequality holds because the greedy coloring will assign the first available color to the vertex \(j+1\), and since $\gr$ agrees with $f$ on the first $j$ vertices, the first available color \(\gr(j+1)\) is at most \(f(j+1)\). 

Let \(c=\gr(j+1)\) and \(d=f(j+1)\). We will create a new \(k\)-maximal coloring \(f'\) such that condition (*) is satisfied for \(j+1\), by swapping the colors \(c\) and \(d\) in \(f\) after position \(j\).

  Concretely, let \(f'\) be given by
  \[
    f'(i) =
      \begin{cases}
        c & \text{ if } i \geq j+1, f(i) = d \\
        d & \text{ if } i \geq j+1, f(i) = c \\
        f(i) & \text{otherwise}.
      \end{cases}
  \]
  
  Note that \(f'\) is still \(k\)-maximal because we swapped one color in \([k]\) for another. Since \(f'(j+1) = \gr(j+1)\) and \(f'(i)=f(i)=\gr(i)\) for $i$ in $[j]$, it follows that \(f'\) satisfies condition (*) for \(j+1\). Thus it remains to show that \(f'\) is proper.
  
  Only the colors \(c\) and \(d\) have changed from \(f\) to \(f'\), so
  let \(v\) be a vertex prior to \(j+1\) which is colored either \(c\) or \(d\) in \(f'\). Since \(f(j+1) = d\) and \(\gr(j+1) = c\), and both of those colorings are proper, the value of the associated Hessenberg function \(h_d(v) < j+1\). This means \(v\) is not adjacent to any vertex \(v'\) after \(j+1\), and so both \(f'^{-1}(c)\) and \(f'^{-1}(d)\) are still independent sets, as desired.
\end{proof}

As a corollary we obtain a similar result as Lemma~\ref{lemma greedy is max} in the Schur basis.

\begin{corollary} \label{thm: max Schur expansion}
Given a Dyck path $d$, let $X_{G(d)} = \sum_{\lambda} f^{d}_{\lambda} s_{\lambda}$. If $f^{d}_{\mu}\neq 0$ for some \(\mu\), then $\mu \preceq \lambda^{\gr}(d)$. 
\end{corollary}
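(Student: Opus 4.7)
The plan is to deduce this from Lemma~\ref{lemma greedy is max} by passing from the Schur expansion to the monomial expansion via Kostka numbers, using $s$-positivity to prevent any cancellation.

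First, I would recall the change-of-basis formula $s_{\lambda} = \sum_{\mu} K_{\lambda,\mu}\, m_{\mu}$, where the Kostka numbers $K_{\lambda,\mu}$ are nonnegative integers satisfying $K_{\lambda,\lambda} = 1$ and $K_{\lambda,\mu} = 0$ unless $\mu \preceq \lambda$ in dominance order. Substituting this into the Schur expansion $X_{G(d)} = \sum_{\lambda} f^{d}_{\lambda}\, s_{\lambda}$ and comparing with the monomial expansion $X_{G(d)} = \sum_{\mu} c^{d}_{\mu}\, m_{\mu}$ yields the identity
\[
c^{d}_{\mu} \;=\; \sum_{\lambda} f^{d}_{\lambda}\, K_{\lambda,\mu}.
\]

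Next, I would invoke Gasharov's theorem (item (ii) or (iii) of Section~\ref{sec:nice}), which guarantees that $X_{G(d)}$ is $s$-positive. This ensures that every $f^{d}_{\lambda}$ is a nonnegative integer, so the sum above has no cancellation: every term is nonnegative. Now suppose $f^{d}_{\mu} \neq 0$ for some partition $\mu$. Isolating the $\lambda = \mu$ summand and using $K_{\mu,\mu} = 1$, we obtain
\[
c^{d}_{\mu} \;\geq\; f^{d}_{\mu}\, K_{\mu,\mu} \;=\; f^{d}_{\mu} \;>\; 0.
\]
Thus $c^{d}_{\mu} \neq 0$, so Lemma~\ref{lemma greedy is max} applies and yields $\mu \preceq \lambda^{\gr}(d)$, as desired.

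There is no real obstacle here: the corollary is a direct consequence of $s$-positivity combined with the unitriangularity of the Kostka matrix with respect to dominance order. The only subtlety worth flagging is that the argument crucially uses the sign-coherence provided by Gasharov's $s$-positivity result; without it, cancellation among the $f^{d}_{\lambda} K_{\lambda,\mu}$ could in principle allow $c^{d}_{\mu} = 0$ while $f^{d}_{\mu} \neq 0$, and the implication would fail.
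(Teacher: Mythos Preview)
Your proof is correct and follows essentially the same approach as the paper: both use $s$-positivity of $X_{G(d)}$ to deduce that $f^d_{\mu}\neq 0$ implies $c^d_{\mu}\neq 0$, and then invoke Lemma~\ref{lemma greedy is max}. You simply spell out explicitly, via Kostka unitriangularity, the step the paper compresses into one line.
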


\begin{proof}
Since $X_{G(d)}$ is $s$-positive, if $f^d_{\lambda}> 0$ for some $\lambda$ then the coefficient $c^d_{\lambda}$ in the monomial basis is also positive, and the result follows by Lemma~\ref{lemma greedy is max}.
\end{proof}

Lastly, given any partition we can find a Dyck path whose chromatic symmetric function $X_{G(d)}$ has as Newton polytope the permutahedron associated to $\lambda$.

\begin{proposition}
Given a partition $\lambda$, the chromatic symmetric function $X_{G(d)}(x_1,\ldots,x_k)$ for the Dyck path $d= \sfn^{\lambda'_1}\sfe^{\lambda'_1} \cdots \sfn^{\lambda'_m}\sfe^{\lambda'_m}$ where $m=\lambda_1$ has Newton polytope $\mathcal{P}^{(k)}_{\lambda}$.
\end{proposition}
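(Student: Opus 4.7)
The plan is to unpack the structure of the indifference graph of the specified path $d$, read off its greedy coloring weight directly, and then apply Theorem~\ref{thm:p_lambda}.

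First, I would analyze the shape of $d = \sfn^{\lambda'_1}\sfe^{\lambda'_1}\cdots \sfn^{\lambda'_m}\sfe^{\lambda'_m}$. Each factor $\sfn^{\lambda'_j}\sfe^{\lambda'_j}$ brings the path back down to the diagonal, so $d$ decomposes into $m$ ``staircase blocks'' touching the diagonal at the points $(s_{j-1},s_{j-1})$ and $(s_j,s_j)$, where $s_j = \lambda'_1+\cdots+\lambda'_j$. By computing the associated Hessenberg function explicitly (it is constant equal to $s_j$ on the range $s_{j-1}+1,\ldots,s_j$), I would deduce that $G(d)$ is the disjoint union of cliques
\[
G(d) \;=\; K_{\lambda'_1} \sqcup K_{\lambda'_2} \sqcup \cdots \sqcup K_{\lambda'_m},
\]
with the $j$-th clique on the vertex set $\{s_{j-1}+1,\ldots,s_j\}$ and no edges between different blocks (since the path returns to the diagonal in between).

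Next, I would compute the greedy coloring on $G(d)$ using the description of the bounce path coloring from Remark~\ref{rem: description greedy coloring with bounce path}. Inside the $j$-th clique, all $\lambda'_j$ vertices must receive distinct colors, and since the greedy rule assigns the smallest available color in order, the vertex $s_{j-1}+i$ receives color $i$ for each $i=1,\ldots,\lambda'_j$. Because the cliques are disjoint (no edges between them), each new block restarts coloring from color $1$. Thus color $i$ is used exactly once per block of size $\geq i$, giving
\[
\wt(\gr)_i \;=\; \#\{\,j : \lambda'_j \geq i\,\} \;=\; \lambda_i,
\]
so $\lambda^{\gr}(d)=\lambda$. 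Applying Theorem~\ref{thm:p_lambda} then yields $\Newton(X_{G(d)}(x_1,\ldots,x_k)) = \mathcal{P}^{(k)}_{\lambda}$.

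Neither step is deep; the only step that requires some care is the clique decomposition of $G(d)$, which is a bookkeeping argument on the Hessenberg function $h_d$ of the concatenated path. This is the main (mild) obstacle, but once it is in place the greedy weight computation is immediate from the conjugate-partition identity, and the result follows from Theorem~\ref{thm:p_lambda}.
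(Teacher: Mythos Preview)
Your proposal is correct and follows essentially the same approach as the paper: you identify $G(d)$ as the disjoint union of cliques $K_{\lambda'_1}\sqcup\cdots\sqcup K_{\lambda'_m}$, compute that the greedy coloring assigns colors $\{1,\ldots,\lambda'_j\}$ to the $j$-th clique so that $\lambda^{\gr}(d)=\lambda$, and then invoke Theorem~\ref{thm:p_lambda}. The paper's proof is the same argument stated more tersely, without the explicit Hessenberg-function bookkeeping you include.
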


\begin{proof}
The graph \(G(d)\) consists of \(m\) cliques of sizes \(\lambda'_1\) through \(\lambda'_m\). The greedy coloring will color the \(i\)th clique with the colors \(\{1, \ldots, \lambda'_i\}\). In this coloring, the color \(j\) is used \(\#\{i \mid \lambda'_i \geq j\} = \lambda_j\) times, thus $\lambda^{\gr}(d)=\lambda$. The result then follows by Theorem~\ref{thm:p_lambda}.
\end{proof}

\section{Chromatic symmetric functions of (3+1)-free posets} 
\label{sec: case 3+1 free posets}

\subsection{Structure of (3+1)-free posets} \label{subsec: structure 3+1 free}

The structure, enumeration, and asymptotics of (3+1)-free posets were studied by Lewis--Zhang \cite{LZ} for the labeled case and Guay-Paquet--Morales--Rowland \cite{GPMR} and Guay-Paquet \cite{MGP} for the unlabeled case. We will use results from the unlabeled case using the notation in  \cite{MGP}.

A \emph{part listing} is an ordered list $L$ of parts that are arranged on nonnegative integer levels. Each part is either a vertex at a given level or a bicolored graph with color classes  arranged as vertices on consecutive levels. We can view a part listing  as a word in the alphabet 
\[
\Sigma =\{ v_i \mid i \geq 0\} \cup \{b_{i,i+1}(H) \mid i \geq 0, H \text{ bicolored graph}\},
\]
where $v_i$ represents a vertex on level $i$, and  $b_{i,i+1}(H)$ represents a graph $H$ on levels $i$ and $i+1$ (see Figure~\ref{fig: part listing 3p1 free poset}).

Given a part listing $L$, we associate a poset $P$ on the vertices of $L$ as follows. Given vertices $x$ and $y$, we have that $x<y$ if
\begin{itemize}
    \item[(i)] $x$ and $y$ are, respectively, at levels $i$ and $j$ with $j-i\geq 2$, 
    \item[(ii)] $x$ is one level below $y$ and the part containing $x$ appears before the part containing $y$ in $L$,
    \item[(iii)] $x$ is one level below $y$ and they are joined by an edge of a bicolored graph $H$.
\end{itemize}

\begin{example} \label{ex:part listing}
The part listing $L$ in Figure~\ref{fig: part listing 3p1 free poset} is given by the word $v_0v_1v_2v_2v_0 b_{0,1}(H)$ where $H$ is the bicolored graph with edges $\{(h,d),(i,d),(i,e)\}$. The associated nine element poset $P$ is given in Figure~\ref{fig: 3p1 free poset}.
\end{example}

\begin{theorem}[{\cite[Propositions 2.4, 2.5]{MGP}}] \;
\begin{itemize}
    \item[(i)] Given a (3+1)-free poset $P$, there exists a part listing $L$ whose associated poset is $P$. 
    \item[(ii)] Given a part listing $L$, the associated poset $P$ is (3+1)-free.
\end{itemize}
Moreover, if the part listings $L$ in (i) and (ii) have no parts $b_{i,i+1}(H)$, then the associated poset $P$ is (3+1)- and (2+2)-free (i.e. a unit interval order).
\end{theorem}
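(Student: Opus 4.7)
The plan is to prove the two directions (i) and (ii) separately, then handle the moreover clause.

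For (ii), I argue by contradiction. Suppose $L$ is a part listing whose associated poset $P$ contains an induced $3+1$: elements $a_1 <_P a_2 <_P a_3$ and an element $b$ incomparable to each $a_i$. Let $\ell(x)$ denote the level of $x$ in $L$. From rules (i)--(iii) in the definition of the associated poset, any comparability $x <_P y$ forces $\ell(x) < \ell(y)$, and any two elements with $\ell(y) - \ell(x) \geq 2$ must be comparable by rule (i). Hence $\ell(a_1) < \ell(a_2) < \ell(a_3)$, so $\ell(a_3) \geq \ell(a_1) + 2$, while incomparability of $b$ with each of $a_1$ and $a_3$ forces $|\ell(b) - \ell(a_1)| \leq 1$ and $|\ell(b) - \ell(a_3)| \leq 1$. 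These constraints pin down $\ell(a_3) = \ell(a_1) + 2$ and $\ell(b) = \ell(a_2) = \ell(a_1) + 1$. A case analysis on whether each of $a_1 <_P a_2$ and $a_2 <_P a_3$ arises from rule (ii) (precedence of parts in $L$) or rule (iii) (a bicolored-graph edge), combined with the analogous case analysis for the incomparabilities of $b$ with the $a_i$'s, then produces a cyclic sequence of ``precedes'' relations among the parts of $a_1, a_2, a_3, b$, contradicting the linearity of $L$.

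For (i), the approach is constructive. Given a (3+1)-free poset $P$, set $\ell(x)$ to be the length of a longest chain of $P$ ending at $x$. The key preliminary lemma is that incomparable $x, y \in P$ satisfy $|\ell(x) - \ell(y)| \leq 1$: if, say, $\ell(y) \geq \ell(x) + 2$, then a maximal chain of length three terminating at $y$ together with $x$ would yield an induced $3+1$. Stratifying $P$ by level, I would then decompose the adjacent-level bipartite incomparability structure into connected ``components,'' realize each nontrivial component as a bicolored part $b_{i,i+1}(H)$ (with $H$ the restriction of $P$'s comparability to that component), and realize the remaining elements as singleton parts $v_i$. Finally, arrange the parts of $L$ into a linear sequence chosen so that, for parts at adjacent levels not inside a common bicolored part, precedence in $L$ reproduces comparability in $P$.

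The main obstacle is the last step of (i): verifying that such a linear order exists, i.e.\ that the ``precedes'' relation one is forced into defining is acyclic. I would show this by observing that any putative cycle, when unfolded through the defining rules of the associated poset, realizes an induced $3+1$ in $P$, contradicting the hypothesis. Finally, the moreover clause follows by a short argument: if $L$ has no bicolored parts, then for $x$ at level $i$ and $y$ at level $i+1$ the relation $x <_P y$ is equivalent to $x$'s part preceding $y$'s in $L$. Any induced $2+2$ in $P$---say $x_1 <_P x_2$, $y_1 <_P y_2$ with all four cross pairs incomparable---would, via the level analysis above, force $\ell(x_1) = \ell(y_1)$ and $\ell(x_2) = \ell(y_2) = \ell(x_1) + 1$, and the four (in)comparabilities then translate into the cyclic precedence chain $x_1 \to x_2 \to y_1 \to y_2 \to x_1$ in the linear order on parts of $L$, which is impossible.
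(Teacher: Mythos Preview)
The paper does not prove this theorem at all: it is stated with the attribution \cite[Propositions 2.4, 2.5]{MGP} and used as a black box, so there is no ``paper's own proof'' to compare your proposal against. Any assessment is therefore of your argument on its own terms, not against the text.

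On those terms, your outline is broadly correct in spirit but leaves real work undone at the two places you yourself flag. In part (ii), the sentence ``a case analysis \ldots\ then produces a cyclic sequence of `precedes' relations'' hides the substance: when a relation $a_1 <_P a_2$ comes from rule (iii) (an edge inside a bicolored part), there is no precedence constraint on parts at all, and several of $a_1,a_2,a_3,b$ may lie in the \emph{same} part (e.g.\ $a_1,a_2,b$ all in one $b_{i,i+1}(H)$, or $b,a_3$ in a $b_{i+1,i+2}(H')$). You then need to argue that whenever two of the four share a part, the remaining precedence constraints still force a cycle or an immediate contradiction (for instance, if $a_2$ and $b$ share a part then $a_2 <_P a_3$ via rule (ii) forces $b <_P a_3$). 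This works, but it is a genuine multi-case check, not a one-liner.

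In part (i), the acyclicity step is the crux and your sketch (``any putative cycle \ldots\ realizes an induced $3+1$'') is not yet an argument: a minimal cycle among parts could a priori have length greater than four, and one must show how to extract a $3+1$ from it. The construction of the parts themselves is also underspecified---you need to say precisely which elements are grouped into a single bicolored part (connected components of the incomparability graph restricted to two adjacent levels is not quite the right notion; one also has to ensure each element lies in at most one part). Finally, the ``moreover'' clause has two directions: you handle (ii) cleanly, but for (i) you must check that when $P$ is in addition $(2+2)$-free your construction produces no bicolored parts, which you do not address.
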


Several part listings can correspond to the same (3+1)-free poset. For instance, in Example~\ref{ex:part listing} the same poset as for the part listing $L$ can be obtained from the part listing $L'=b_{1,2}(H)v_0v_1v_2v_2v_0$. There are certain commutation and {\em circulation} relations on the words in $\Sigma$ of listings that yield the same poset (see \cite[Section 3.3]{GPMR} and \cite[Section 2]{MGP}). 

From  \cite[Proposition 3.11]{GPMR}, we can pick a unique part listing representative of a (3+1)-free poset that we call a {\em canonical part listing}.  Moreover, by \cite[Remark 3.2]{GPMR}, such a canonical part listing corresponds to a (3+1)- and (2+2)-free poset if and only if the canonical part listing has no occurrences of $b_{i,i+1}(H)$. We summarize the characterization of canonical part listings of  (3+1)- and (2+2)-free posets in the following result implicit in \cite{GPMR}.\footnote{In \cite{GPMR} the authors use the letter $c_i$ corresponding to {\em clones} that correspond to consecutive copies of the letter $v_i$ in the part listing.} 


\begin{theorem}[{\cite[Remark 3.2, Proposition 3.11]{GPMR}}] \label{thm: char unit interval orders}
A part listing $v_{a_1}\cdots v_{a_n}$  of an $n$-element (3+1)- and (2+2)-free poset $P$ is the canonical part listing if and only if $a_1=0$ and $a_{i+1}\leq a_i +1$ for $i=1,\ldots, n-1$. 
\end{theorem}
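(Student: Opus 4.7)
The plan is to leverage the existing machinery on canonical part listings developed in \cite{GPMR, MGP}. By the characterization preceding the theorem, the canonical part listing of a (3+1)- and (2+2)-free poset uses only $v_i$ letters, so it has the form $v_{a_1}\cdots v_{a_n}$. My strategy is to prove the two directions independently, using the commutation and circulation relations that govern when two part listings represent the same poset.

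For the forward direction (canonical implies the conditions), I would argue by contradiction. Suppose $a_1 \geq 1$. Then no vertex lies at level $0$, so applying a global circulation relation to shift every level down by $1$ produces a distinct representative of the same poset, contradicting canonicity. Suppose instead that $a_{i+1} \geq a_i + 2$ for some $i$. By an inductive argument on the prefix, the preceding letters already satisfy $a_{j+1}\leq a_j+1$, so no vertex among $1,\ldots,i$ sits at level $a_{i+1}-1$. A local circulation step can then lower $a_{i+1}$ by $1$ without disturbing any relation of type (i)--(iii) on these vertices, again contradicting canonicity.

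For the backward direction, I would show that any two listings $v_{a_1}\cdots v_{a_n}$ and $v_{b_1}\cdots v_{b_n}$ satisfying the conditions and representing the same poset $P$ must coincide. Proceed by induction: $a_1 = b_1 = 0$, and assuming $a_j = b_j$ for all $j\leq k$, one observes that the poset relations between vertex $k+1$ and the previous vertices (together with the constraint $a_{k+1}\leq a_k + 1$) force the value $a_{k+1}$: specifically, if some $j\leq k$ satisfies $j <_P k+1$, then $a_{k+1}$ equals $\max\{a_j + 1 \mid j\leq k,\ j<_P k+1\}$, and otherwise $a_{k+1}$ is pinned down by the incomparability relations with the preceding vertices. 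Since a canonical listing always exists by \cite{GPMR} and satisfies the conditions by the forward direction, any listing satisfying the conditions must coincide with it.

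The main obstacle I anticipate is formalizing the local circulation step in the forward direction at the level of the integer sequence $(a_1,\ldots,a_n)$. The commutation and circulation relations from \cite{GPMR} and \cite{MGP} are local rewriting rules on part listings, and translating them cleanly into the claim that a gap $a_{i+1}\geq a_i+2$ can always be reduced requires careful bookkeeping: one must confirm that no poset relation of type (i)--(iii) is gained or lost by the reduction. Once this is settled, the backward direction's inductive argument should proceed routinely.
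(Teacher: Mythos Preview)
The paper does not supply a proof of this statement: it is stated as a result extracted from \cite{GPMR} (specifically Remark~3.2 and Proposition~3.11 there), with the sentence ``We summarize the characterization of canonical part listings of (3+1)- and (2+2)-free posets in the following result implicit in \cite{GPMR}.'' There is therefore nothing in the paper to compare your proposal against.

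That said, a few remarks on your sketch. Your use of the term \emph{circulation relation} does not match its meaning in \cite{GPMR,MGP}: circulation is a cyclic rearrangement of the word in $\Sigma^*$ (this is how the paper uses it in the proof of Lemma~\ref{lemma proof lemma 3p1 case}, reducing $Ab_{i,i+1}(H)B$ to the case $B$ empty), not a global shift of level indices. The operation you describe in the forward direction---lowering all levels by $1$ when no vertex sits at level $0$---does preserve the associated poset, but it is not one of the commutation or circulation moves, and whether it produces a lexicographically larger or smaller word depends on the total order placed on $\Sigma$, which you never specify. Since the canonical listing is defined as the lex-maximal representative (see the remark following the theorem in the paper), any contradiction argument of the form ``here is another representative'' must actually exhibit a lex-larger word, and for that you need the order on $\{v_i\}$. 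The same issue recurs in your treatment of a gap $a_{i+1}\geq a_i+2$: lowering $a_{i+1}$ by $1$ is not a commutation or circulation move, and you would again need to argue about lex order directly. Your backward direction is more solid in spirit, but the case analysis for how $a_{k+1}$ is ``pinned down'' by incomparabilities alone (when no $j\leq k$ satisfies $j<_P k+1$) is not spelled out. If you want a self-contained argument, it is cleaner to work directly with the lex-maximality definition and the explicit order on $\Sigma$ from \cite{GPMR} or \cite{gelinas2022proof}, rather than invoking the relations.
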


\begin{remark}
In \cite[Sec. 2 and Sec. 3]{GPMR} the canonical part listing is defined as the lexicographically maximal element of a subset of words in the alphabet $\Sigma$ that is called the {\em trace of the dependence graph}, coming from the theory of trace monoids (see \cite[\S 2.3]{tracesbook}).\footnote{A previous version of the current paper incorrectly restated the definition of the canonical part listing from \cite{GPMR}.} The authors in \cite[\S 6]{gelinas2022proof} obtain the canonical part listing as characterized in Theorem~\ref{thm: char unit interval orders} using a well-chosen order on the entire set of words in the alphabet $\Sigma$, circumventing the use of trace monoids. In what follows we use only the characterization in Theorem~\ref{thm: char unit interval orders}.
\end{remark}

\begin{remark} \label{def: varphi area sequence}
Note that the set of tuples of integers ${\bf a} = (a_1,\ldots,a_n)$ satisfying $a_1=0$ and $0\leq a_{i+1}\leq a_i+1$ is a classical interpretation for the Catalan numbers \cite[Exercise 2.80]{Catbook}. Such tuples have the following bijection with Dyck paths: ${\bf a}$ encodes the area sequence of a Dyck path $d'$, or alternatively ${\bf a} \mapsto d'$ where $d'$ is the Dyck path obtained by replacing each $a_i$ by a north step $\sfn$ and $a_i-a_{i+1}+1$ east steps $\sfe$ \cite[Solution 3.80]{Catbook}. 
\end{remark}

\subsection{Guay-Paquet's reduction from (3+1)-free posets to unit interval orders}

In this section,  given a part listing $L$ of a (3+1)-free poset $P=P(L)$, we write $X(L) \coloneq X_{G(P)}$.

For level $i=0,1,\ldots$ and $j=0,1,\ldots,s$, let $U^{(i)}_j$ be the part listing 
\[
U^{(i)}_j \coloneq v_{i+1}^{s-j} v_i^r v_{i+1}^j.
\]
For level $i=0,1,\ldots$ and $j=0,1,\ldots,r$, let $D^{(i)}_j$ be the part listing 
\[
D^{(i)}_j \coloneq v_i^j v_{i+1}^s v_i^{r-j}.
\]
If the context is clear, we omit the level $i$ and denote these part listings by $U_j$ and $D_j$ respectively. 

Given a bicolored graph $H$ with $r$ lower vertices, $s$ upper vertices, and $j=0,\ldots,\min(r,s)$, let $q_j$ be the probability that $H$ and a uniformly random matching $M$ with $\min(r,s)$ edges between the lower and upper vertices have $j$ edges in common.

\begin{theorem}[{\cite[Proposition 4.1~(iv)]{MGP}}] \label{thm: decomposing bipartite}
Let $L$ be the part listing of a (3+1)-free poset containing a bicolored graph $b_{i,i+1}(H)$ with $r$ vertices on level $i$ and $s$ vertices on level $i+1$. Then
\[
X(L) = \sum_{j=0}^{\text{min}(r,s)} q_j X(L_j),
\]
where $L_j$ is the part listing obtained from $L$ by replacing $b_{i,i+1}(H)$ with $U_j$ if $r\geq s$ and with $D_j$ if $r<s$, and \(q_j\) is the probability defined above. 
\end{theorem}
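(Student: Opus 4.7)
The plan is to prove this decomposition via the Guay-Paquet--Orellana--Scott modular relation for chromatic symmetric functions of incomparability graphs of (3+1)-free posets. The central observation is that once all parts of $L$ other than the bicolored block $b_{i,i+1}(H)$ are held fixed, the map $H \mapsto X(L)$ depends on $H$ only through a well-defined equivalence class, and so extends to a $\mathbb{Q}$-linear functional on the vector space of formal $\mathbb{Q}$-combinations of bicolored graphs on the prescribed bipartition $[r]\sqcup[s]$, modulo the local swap relations on triples of vertices.

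First, I would unpack what the part listings $U_j$ and $D_j$ encode in terms of the incomparability graph. For $U_j = v_{i+1}^{s-j}v_i^r v_{i+1}^j$, using the rules (ii) and (iii) for the partial order associated to a part listing, the $r$ level-$i$ vertices are incomparable to precisely the $s-j$ level-$(i+1)$ vertices that precede the lower block (the remaining $j$ upper vertices appear after the lower block, hence are strictly above all of them by rule (ii)). Consequently, $L_j$ corresponds to replacing $H$ by the bipartite graph $H_j$ whose edges are all $r(s-j)$ pairs between the lower vertices and a fixed $(s-j)$-subset of upper vertices; the case $r < s$ using $D_j$ is symmetric.

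Second, I would expand the equivalence class $[H]$ in terms of the $[H_j]$ via a random-matching argument. Sample a uniformly random matching $M$ of size $\min(r,s)$ between the two levels, and consider the statistic $j = |E(H)\cap M|$. Using the modular swap relation, one can rewire pairs of bipartite edges along a matching while preserving the class, and averaging over all matchings collapses $[H]$ to
\[
[H] \;=\; \sum_{j=0}^{\min(r,s)} q_j\, [H_j], \qquad q_j \;=\; \Pr\!\bigl[\,|E(H)\cap M| = j\,\bigr],
\]
where the weights $q_j$ are exactly the probabilities from the statement. Applying $X$ to both sides yields the claimed identity $X(L) = \sum_j q_j\, X(L_j)$.

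The main obstacle is the second step: carefully verifying that the modular swap relation, applied inductively along the edges of $M$, reproduces precisely the matching-overlap distribution $(q_j)_j$. A clean route is to introduce an auxiliary generating variable tracking $|E(H)\cap M|$ and show that the associated polynomial collapses to the advertised convex combination once all local exchanges have been carried out; alternatively, one may argue by induction on $|E(H)\triangle E(H_{\min(r,s)})|$, using a single swap at each step and tracking how the conditional matching distribution updates.
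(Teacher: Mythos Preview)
The paper does not prove this theorem; it is quoted verbatim from \cite[Proposition~4.1~(iv)]{MGP} and no argument is given here. So there is no in-paper proof to compare against. Your outline is in fact a sketch of Guay-Paquet's own strategy in \cite{MGP}: the modular relation makes $H \mapsto X(L)$ descend to a linear functional on the quotient of the free $\mathbb{Q}$-module on bicolored graphs by the three-term relations, and one then identifies the class $[H]$ in that quotient as the convex combination $\sum_j q_j [H_j]$.

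Two points of care. First, your description of $H_j$ is complemented. In the part-listing formalism an edge of the bicolored graph records a \emph{covering relation} (rule~(iii)), not an incomparability. In $U_j = v_{i+1}^{s-j} v_i^r v_{i+1}^j$ the $r$ lower vertices are below the last $j$ upper vertices and incomparable to the first $s-j$; hence the bicolored graph corresponding to $U_j$ is the complete bipartite graph between the $r$ lower vertices and a fixed $j$-subset of upper vertices, with $rj$ edges, not $r(s-j)$. This does not break the argument, but the indexing has to be consistent or the identification of $q_j$ with $\Pr[|E(H)\cap M|=j]$ will be off by a complement.

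Second, the step you flag as the main obstacle really is one. The modular relation is a three-term linear relation, not a two-term swap, so ``rewiring along a matching while preserving the class'' is not literally what a single application does; each application replaces one graph by a specific linear combination of two others. Guay-Paquet handles this by showing that the quotient is isomorphic (as a vector space) to the span of matching-overlap distributions, which simultaneously gives the spanning by the $[H_j]$ and pins down the coefficients as the $q_j$. Your proposed induction on $|E(H)\triangle E(H_{\min(r,s)})|$ would have to track how one three-term step changes the overlap distribution with a uniformly random maximum matching, and check that the resulting recursion is solved exactly by the $q_j$; until that bookkeeping is carried out, what you have is a plan rather than a proof.
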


\begin{remark}
The probabilities $q_j$ have an interpretation in terms of rook theory. Given such a bicolored graph $H$ with vertex set $\{1,\ldots,r\} \cup \{r+1,\ldots,r+s\}$, its complement $G=\overline{H}$ is a co-bipartite graph corresponding to a board $B \subset [r]\times [s]$ (see Section~\ref{sec: cobipartite graphs and rooks}). Then $q_j = h_j(B)/|r-s|!$ where $h_j(B)$ is the $j$th \emph{hit number} of $B$, which counts the number of placements of $\min(r,s)$ non-attacking rooks on the rectangular board $[r]\times [s]$ with $j$ rooks in $B$. 
\end{remark}

\subsection{Main result for (3+1)-free posets}

First, we define the greedy weight for colorings of an incomparability graph of a (3+1)-free poset. Given a (3+1)-free poset $P$, the weight $\lambda^{\gr}(P)$ is defined as follows. For a part listing $L$ for $P$:
\begin{itemize}
    \item[(i)] apply Theorem~\ref{thm: decomposing bipartite} to every bicolored graph $b_{i,i+1}(H)$ in the part listing,
    \item[(ii)] for each $b_{i,i+1}(H)$, find the largest $j$ such that $q_j \neq 0$ and replace $L$ by $L_j$.
\end{itemize}
At the end, we obtain a part listing $L'$ with no bipartite graphs and thus representing a (3+1)- and (2+2)-free poset (i.e. a unit interval order). 
By Theorem~\ref{thm: char unit interval orders}, there is a lex-maximal part listing $L''$ for that poset satisfying the property $a_1=0$, $a_{i+1}\leq a_i+1$. Using the greedy coloring in the incomparability graph, which is an indifference graph for some Dyck path $d$, we obtain the weight $\lambda^{\gr}(P)\coloneq\lambda^{\gr}(d)$ (see Section~\ref{sec: listing to Dyck paths}).

\begin{theorem}\label{thm:p_lambda 3+1 free}
Let $G(P)$ be an incomparability graph of a (3+1)-free poset.  Then $X_{G(P)}(x_1,\ldots,x_k)$ is SNP, and its Newton polytope is $\mathcal{P}^{(k)}_{\lambda^{\gr}(P)}$. In particular, \(\lambda^{\gr}(P)\) dominates the weight of any coloring of \(G(P)\).
\end{theorem}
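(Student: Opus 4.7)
The plan is to combine Guay-Paquet's reduction Theorem~\ref{thm: decomposing bipartite} with the Dyck path case Theorem~\ref{thm:p_lambda} together with a monotonicity observation. Starting from a part listing $L$ for $P$ containing bicolored graphs $b_{i_1,i_1+1}(H_1),\dots,b_{i_m,i_m+1}(H_m)$, I would apply Theorem~\ref{thm: decomposing bipartite} iteratively, one bicolored graph at a time. Since replacing one $b_{i,i+1}(H)$ by the sequence of singleton parts $U_j$ or $D_j$ leaves every other bicolored graph in the listing untouched, the probabilities $q^{(l)}_{j_l}$ for distinct bicolored graphs depend only on their own Ferrers shape, so they multiply independently and we arrive at
\[
X_{G(P)} \;=\; X(L) \;=\; \sum_{\mathbf{j}} q_{\mathbf{j}}\, X(L_{\mathbf{j}}), \qquad q_{\mathbf{j}}=\prod_{l=1}^m q^{(l)}_{j_l}\;\geq\;0,
\]
where $\mathbf{j}=(j_1,\dots,j_m)$ runs over all tuples and each $L_{\mathbf{j}}$ is a unit interval order part listing. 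Because every $q_{\mathbf{j}}$ is nonnegative, the support of $X(L)(x_1,\dots,x_k)$ is the union of the supports of those $X(L_{\mathbf{j}})(x_1,\dots,x_k)$ with $q_{\mathbf{j}}>0$, and by Theorem~\ref{thm:p_lambda} each such support equals $\mathcal{P}^{(k)}_{\lambda^{\gr}(d_{\mathbf{j}})}\cap\mathbb{Z}^k$, where $d_{\mathbf{j}}$ is the Dyck path for $L_{\mathbf{j}}$ (via Theorem~\ref{thm: char unit interval orders} and Remark~\ref{def: varphi area sequence}).

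The crux is a componentwise monotonicity claim: for any $l$, $\supp X(L_{\mathbf{j}})\subseteq \supp X(L_{\mathbf{j}+e_l})$. Concretely, going from $U_j$ to $U_{j+1}$ in the listing amounts to moving one upper-level vertex $u$ from the front upper block (where, by condition (ii) of the listing-to-poset construction, $u$ is incomparable to each of the $r$ lower vertices of the bicolored graph) to the rear upper block (where each of those $r$ lowers becomes strictly below $u$). The entire block of singletons replacing the bicolored graph occupies a fixed range of consecutive positions in $L$, so the position of any outside part relative to the whole block is unchanged, and the three comparability conditions leave every other pair of vertices untouched. Hence $G(P(L_{\mathbf{j}+e_l}))$ is obtained from $G(P(L_{\mathbf{j}}))$ by deleting edges only, so every proper coloring of the former is proper for the latter. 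The argument for $D_j\mapsto D_{j+1}$ is symmetric.

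With monotonicity in hand, set $j_l^{\star}=\max\{j:q^{(l)}_j>0\}$ and $\mathbf{j}^{\star}=(j^{\star}_1,\dots,j^{\star}_m)$. Then $q_{\mathbf{j}^{\star}}=\prod_l q^{(l)}_{j_l^{\star}}>0$, so $\mathbf{j}^{\star}$ contributes to the decomposition, and $\mathbf{j}\leq \mathbf{j}^{\star}$ componentwise for every $\mathbf{j}$ with $q_{\mathbf{j}}>0$. Iterating the monotonicity claim yields $\supp X(L_{\mathbf{j}})\subseteq \supp X(L_{\mathbf{j}^{\star}})$ for all such $\mathbf{j}$, so
\[
\supp X_{G(P)}(x_1,\dots,x_k) \;=\; \supp X(L_{\mathbf{j}^{\star}})(x_1,\dots,x_k) \;=\; \mathcal{P}^{(k)}_{\lambda^{\gr}(d_{\mathbf{j}^{\star}})}\cap\mathbb{Z}^k \;=\; \mathcal{P}^{(k)}_{\lambda^{\gr}(P)}\cap\mathbb{Z}^k,
\]
with the last equality being the definition of $\lambda^{\gr}(P)$. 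This proves both SNP and the identification of the Newton polytope, and the final dominance assertion then follows from Rado's description of the lattice points of $\mathcal{P}^{(k)}_{\lambda^{\gr}(P)}$ recalled in Theorem~\ref{thm: Schurs and P_lambda}. The main obstacle I anticipate is the careful bookkeeping in the monotonicity step: one must verify case-by-case through conditions (i)--(iii) of Section~\ref{subsec: structure 3+1 free} that the passage from $U_j$ to $U_{j+1}$ (resp.\ $D_j$ to $D_{j+1}$) deletes exactly the $r$ (resp.\ $s$) edges from the moved vertex to the opposite level of the bicolored graph and affects no other edge of the incomparability graph.
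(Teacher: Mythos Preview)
Your approach is essentially the paper's: both apply Guay-Paquet's reduction (Theorem~\ref{thm: decomposing bipartite}), use monotonicity of the support in the index $j$ (the paper isolates this as Lemma~\ref{lemma proof lemma 3p1 case}, invoking the circulation relation to set $B=\varnothing$ rather than arguing directly as you do), and reduce to the Dyck path case; the paper merely packages this as an induction on the number of bicolored graphs instead of your full product expansion. One slip to fix: in your monotonicity paragraph the clause ``every proper coloring of the former is proper for the latter'' is reversed---you need that every proper coloring of $G(P(L_{\mathbf{j}}))$ (more edges) remains proper for $G(P(L_{\mathbf{j}+e_l}))$ (fewer edges), which is what yields $\supp X(L_{\mathbf{j}})\subseteq \supp X(L_{\mathbf{j}+e_l})$.
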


\begin{figure}
    \centering
      \centering
  \begin{subfigure}[b]{0.5\textwidth}
    \centering
     \includegraphics{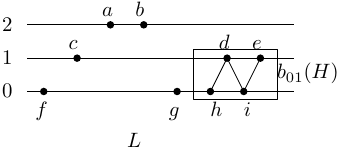}
    \caption{}
    \label{fig: part listing 3p1 free poset}
\end{subfigure}
\begin{subfigure}[b]{0.4\textwidth}
  \centering
  \includegraphics{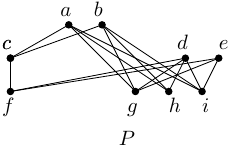}
  \caption{}
  \label{fig: 3p1 free poset}
\end{subfigure}
    \caption{(A) A part listing $L$ and (B) its corresponding (3+1)-free poset $P$.}
\end{figure}

In order to prove Theorem~\ref{thm:p_lambda 3+1 free}, we need the following lemma.

\begin{lemma} \label{lemma proof lemma 3p1 case}
Suppose that \(A\) and \(B\) are part listings, and fix \(r,s\) positive numbers.

Let \(0 \leq j < k \leq s\),
and suppose that we have posets given by part listings as follows:
\[
P_j \coloneq Ab_{i,i+1}(U_j)B \quad \text{and} \quad P_k \coloneq Ab_{i,i+1}(U_k)B.
\]
Then if \(\kappa\) is a weight of a coloring of \(G(P_j)\), there is also a coloring of \(G(P_k)\) with weight \(\kappa\).
The same result holds if we replace $U_i$ with $D_i$.
\end{lemma}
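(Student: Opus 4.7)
The plan is to identify the vertex sets of $G(P_j)$ and $G(P_k)$ in the natural way and to show that, under this identification, $G(P_k)$ is obtained from $G(P_j)$ by deleting edges. Once this is established, every proper coloring of $G(P_j)$ is automatically a proper coloring of $G(P_k)$ with the same weight, which is exactly the assertion; the $D_j$ case will then follow by the same argument applied to the complementary reordering.

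Label the vertices contributed by the block $b_{i,i+1}(U_\bullet)$ as $w_1,\ldots,w_r$ on level $i$ and $u_1,\ldots,u_s$ on level $i+1$, so that the word $U_j$ reads $u_1\cdots u_{s-j}\,w_1\cdots w_r\,u_{s-j+1}\cdots u_s$ and the word $U_k$ is the analogous word with $s-k$ in place of $s-j$. First I would argue that every edge of the incomparability graph involving at least one vertex outside the block is the same in $G(P_j)$ as in $G(P_k)$: by the rules defining the partial order from a part listing, pairs at level-distance $\geq 2$ are always comparable, pairs at equal levels are always incomparable, and for a pair at consecutive levels with one endpoint in $A\cup B$, the relation is decided by whether the outside endpoint's part sits before or after the block as a whole, which is unchanged by any internal reordering.

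The second step is the inside-block comparison. A pair $(w_b,u_a)$ inside the block is incomparable precisely when $u_a$'s part appears before $w_b$'s, i.e.\ when $a\leq s-j$; the remaining $u_a$ satisfy $w_b<u_a$ and contribute no edge to the incomparability graph. Hence the block contributes exactly $r(s-j)$ cross-level edges in $G(P_j)$ versus $r(s-k)$ in $G(P_k)$, plus the two common cliques on the $u$'s and on the $w$'s. Since $k>j$ we obtain $E(G(P_k))\subseteq E(G(P_j))$, and lifting proper colorings along this inclusion gives the lemma. For $D_j=v_i^j v_{i+1}^s v_i^{r-j}$ the analogous analysis shows that exactly the last $r-j$ level-$i$ vertices are incomparable with the level-$(i+1)$ vertices, so passing from $j$ to $k$ again only removes edges. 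The only genuinely delicate step is the bookkeeping in the external-edges claim; everything else---the equal-level cliques, the edge count inside the block, and the observation that a proper coloring of a graph is proper on any spanning subgraph---is formal.
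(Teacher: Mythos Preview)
Your argument is correct and follows the same idea as the paper's proof: passing from $U_j$ to $U_k$ with $k>j$ only adds relations to the poset (equivalently, deletes edges from the incomparability graph), so proper colorings transfer with unchanged weight. The paper streamlines the bookkeeping by invoking the circulation relation to assume $B$ is empty and by reducing to the step $k=j+1$, whereas you verify the external-edge invariance directly and handle arbitrary $j<k$ at once; the substance is the same.
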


\begin{proof}
It suffices to take \(B\) to be empty by the circulation relation of \cite[Section 2.2]{MGP}. 
Also, it suffices to take \(k = j+1\).
The part listings for \(U_{j}\) and \(U_{j+1}\) are
\[
v_2^{s-j}v_1^rv_2^j \quad \text{and} \quad v_2^{s-j-1}v_1^rv_2^{j+1},
\]
respectively.  Therefore, the poset for
\(
Ab_{i,i+1}(U_{j+1})
\)
is the poset for
\(
Ab_{i,i+1}(U_j)
\)
together with \(r\) additional covering relations coming from moving a \(v_{i+1}\) after all of the \(v_i\)s. Therefore, any coloring of \(G(Ab_{i,i+1}(U_j))\) has a corresponding coloring of \(G(Ab_{i,i+1}(U_{j+1}))\) with the same weight, since adding relations to a poset deletes edges from the incomparability graph, which never makes a proper coloring improper. Therefore, \(G(P_{j+1})\) has a coloring of weight \(\kappa\).
The proof for the case where $U_k$ is replaced with $D_k$ is the same.
\end{proof}

\begin{proof}[Proof of Theorem~\ref{thm:p_lambda 3+1 free}]
Let $L$ be a part listing for \(P \coloneq P(L)\) and recall that for part listings \(F\) we define $X(F) \coloneq X_{G(P(F))}$. The proof is by induction on the number of bicolored graphs in \(L\). If there are none, then \(P\) is also (2+2)-free (i.e. a unit interval order) and thus the graph $G(P)$ is an indifference graph of a Dyck path (see Remark~\ref{indiff graph to incomp graph} and Conjecture~\ref{conj: listing to Dyck path}). The result then follows in this case by Lemma~\ref{lemma greedy is max}. If \(L = Ab_{i,i+1}(H)B\) has at least one bicolored graph, then \(X(L)\) is a convex combination
\[
X(L) = \sum_j q_j X(AU_jB) \quad \text{or} \quad X(L) = \sum_j q_j X(AD_jB).
\]
We proceed with the first case and the argument for the second case is the same. Let \(j'\) be the largest \(j\) such that \(q_j\) is nonzero. The support of \(X(L)\) is the union
\[
\supp X(L)(x_1, \ldots, x_k) = \bigcup_{j=0}^{j'} \supp X(AU_jB)(x_1, \ldots, x_k).
\]
By Lemma~\ref{lemma proof lemma 3p1 case},
\[
\supp X(AU_jB)(x_1, \ldots, x_k) \subset \supp X(AU_{j'}B)(x_1, \ldots, x_k),
\]
so
\[
\supp X(L)(x_1, \ldots, x_k) = \supp X(AU_{j'}B)(x_1, \ldots, x_k).
\]
By the inductive hypothesis, the support of \(X(AU_{j'}B)(x_1, \ldots, x_k)\) is \(\mathcal{P}^{(k)}_{\lambda^{\gr}(AU_{j'}B)}\), which is \(\mathcal{P}^{(k)}_{\lambda^{\gr}(P)}\) by definition.
\end{proof}

\begin{example} \label{big example part listings}
The part listing $L=v_0v_1v_2v_2v_0b_{01}(H)$ in Figure~\ref{fig: part listing 3p1 free poset} has  $X(L)=$
 \begin{multline*}
 362880m_{1^9} + 90720m_{21^7} + 23040m_{2^21^5} + 6048m_{2^31^3} + 1728m_{2^41} + 1440m_{31^6} + 384m_{321^4} + 112m_{32^21^2} + 48m_{32^3}.
 \end{multline*}
The part listings $L_0$, $L_1$, and $L_2$ in Figure~\ref{fig:decomposition part listings} have chromatic symmetric functions
\begin{align*}
    X(L_0) &= 362880m_{1^9} + 75600m_{21^7} + 14880m_{2^21^5} + 2664m_{2^31^3} + 384m_{2^41} \\ & +  1440m_{31^6} + 240m_{321^4} + 32m_{32^21^2},\\
    X(L_1) &= 362880m_{1^9} + 85680m_{21^8} + 20160m_{2^21^5} + 4752m_{2^31^3} + 1152m_{2^41} \\&  + 1440m_{31^6} + 336m_{321^4} + 80m_{32^21^2} + 24m_{32^3},\\
    X(L_2) &= 362880m_{1^9} + 95760m_{21^7} + 25920m_{2^21^5} + 7344m_{2^31^3} + 2304m_{2^41} \\ &  + 1440m_{31^6} + 432m_{321^4} + 144m_{32^21^2} + 72m_{32^3}.
\end{align*}
Next we apply Theorem~\ref{thm: decomposing bipartite}. For the bicolored graph $H$ we have the probabilities $q_0=0$, $q_1=q_2=1/2$, thus 
\[
X(L) = 0\cdot X(L_0) + \frac12 X(L_1) + \frac12 X(L_2).
\]

The part listings $L_0$, $L_1$, and $L_2$ correspond to (3+1)- and (2+2)-free posets. Their respective lex-maximal listings and Hessenberg functions (obtained by inspection, see Conjecture~\ref{conj: listing to Dyck path}) are:

\begin{center}
\begin{tabular}{ccc} \hline 
 & lex-maximal listing & Hessenberg function\\ \hline
$L_0$ & $(0,1,2,2,0,1,1,0,0)$ &  $(4,5,7,7,7,9,9,9,9)$\\
$L_1$  & $(0,1,2,2,0,1,0,0,1)$ & $(4,5,6,6,7,9,9,9,9)$\\
$L_2$ & $(0,1,2,2,0,0,0,1,1)$ & $(4,5,5,5,7,9,9,9,9)$\\ \hline
\end{tabular}
\end{center}
If we perform the greedy algorithm on the incomparability graphs,
 we obtain the partitions $32211$, $3222$, and $3222$ respectively. Then, by Theorem~\ref{thm:p_lambda 3+1 free},  we have that 
\[
\Newton(X_{G(P)}(x_1,\dots,x_k))=\mathcal{P}^{(k)}_{3222}.
\]
\end{example}

\begin{figure}
    \centering
    \includegraphics{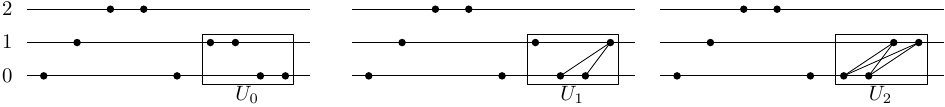}
    \caption{The part listings $L_0$, $L_1$, and $L_2$ in the convex combination of $X(L)$.  The dominant coloring $\kappa_2$ of $X(L_2)$ dominates the respective dominant colorings $\kappa_0$ and $\kappa_1$ of $X(L_0)$ and $X(L_1)$.}
    \label{fig:decomposition part listings}
\end{figure}

\begin{figure}
    \centering
    \includegraphics{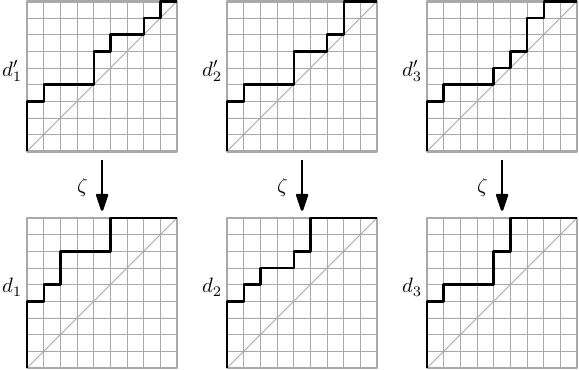}
    \caption{Dyck paths $d'_1,d'_2,d'_3$ corresponding to the lex-maximal listings $L_1,L_2,L_3$ from Example~\ref{big example part listings} and their corresponding Dyck paths $d_1,d_2,d_3$ associated to the incomparability graphs. The conjectured correspondence between these Dyck paths agrees with the $\zeta$ map.}
    \label{fig:big example Dyck path correspondence}
\end{figure}

\section{Stability and the Lorentzian property of $X_{G(d)}$}\label{sec:stabandlor}

\subsection{Main conjectures for all Dyck paths}

Our main results (Proposition~\ref{co-bipartite-m-convex} and Theorems~\ref{thm:p_lambda} and~\ref{thm:p_lambda 3+1 free}) establish that the supports of certain classes of polynomials are M-convex.  The property of M-convexity is often a shadow of a more general property, that of being a Lorentzian polynomial.

Lorentzian polynomials were recently introduced by Br\"and\'en and Huh in \cite{bh20} as a bridge between discrete convex analysis and concavity properties in combinatorics.  Many families of polynomials appearing in algebraic combinatorics are known or conjectured to be Lorentzian: for example (normalized) Schur polynomials, and a variety of other Schur-like polynomials \cite{HMMStD}.

\begin{definition}\label{def:lor}
A homogeneous polynomial $f \in \R[x_1,\ldots,x_k]$ of degree $n$ with nonnegative coefficients is called \emph{Lorentzian} if the following two conditions are satisfied:
\begin{itemize}
    \item $\supp(f)$ is M-convex, and
    \item for all $i_1,i_2,\ldots,i_{n-2} \in [k]$, the associated quadratic form of the quadratic polynomial 
    \[
    \frac{\partial}{\partial x_{i_1}} \circ \cdots \circ \frac{\partial}{\partial x_{i_{n-2}}}(f)
    \]
    has at most one positive eigenvalue. That is, the {\em Hessian} of the quadratic polynomial has at most one positive eigenvalue. 
\end{itemize}
\end{definition}

Note that both conditions in Definition~\ref{def:lor} are ``easy" to check and in particular only require a finite number of checks. An important application of Lorentzian polynomials is that their coefficients form a type of log-concave sequence (and are further log-concave as functions on the positive orthant $\mathbb{R}^k_{>0}$). 

Given a vector $\alpha$ in $\mathbb{N}^k$, let $\alpha! \coloneq \alpha_1!\cdots \alpha_k!$.

\begin{proposition}[{\cite[Theorem 2.30; Proposition 4.4]{bh20}}] \label{thm: log concavity of LP}
Let $f = \sum_{\alpha\in\Delta^n_k} c_{\alpha} \mathbf{x}^\alpha$ be a Lorentzian polynomial.  Then $f$ exhibits the following two types of log-concavity phenomena:
\begin{itemize}
    \item (Continuous) The polynomial $f$ is either identically zero or its logarithm is concave on the positive orthant $\mathbb{R}^k_{>0}$.
    \item (Discrete) The coefficients of $f$ satisfy:
\[(\alpha!)^2 c_\alpha^2 \ge (\alpha+e_i-e_j)! (\alpha-e_i+e_j)!\cdot c_{\alpha+e_i - e_j}c_{\alpha-e_i+e_j} \text{ for all } i,j \text{ in } [k] \text{ and all } \alpha \text{ in } \Delta^n_k,\]
and thus
\[
c_\alpha^2 \ge  c_{\alpha+e_i - e_j}c_{\alpha-e_i+e_j} \text{ for all } i,j \text{ in } [k] \text{ and all } \alpha \text{ in } \Delta^n_k.
\]
\end{itemize}
\end{proposition}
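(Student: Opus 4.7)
My plan is to derive both statements from the Hessian-eigenvalue condition in the definition of a Lorentzian polynomial, by two somewhat different arguments. For the discrete inequality, fix $\alpha \in \Delta^n_k$ and $i,j \in [k]$. We may assume $i \neq j$ (else the inequality is tautological) and $\alpha_i,\alpha_j \geq 1$ (otherwise one of $\alpha + e_i - e_j$, $\alpha - e_i + e_j$ has a negative coordinate, and the corresponding coefficient vanishes by convention). Set $\beta \coloneq \alpha - e_i - e_j \in \Delta^{n-2}_k$ and $g \coloneq \partial^\beta f$; a short calculation shows that the constant Hessian $H$ of the quadratic $g$ satisfies
\[
H_{ii} = \alpha_i(\alpha_i+1)\,c_{\alpha+e_i-e_j}, \qquad H_{jj} = \alpha_j(\alpha_j+1)\,c_{\alpha-e_i+e_j}, \qquad H_{ij} = \alpha_i\alpha_j\,c_\alpha.
\]
By hypothesis $H$ has at most one positive eigenvalue, so by Cauchy interlacing its principal $2\times 2$ submatrix indexed by $\{i,j\}$ also has at most one positive eigenvalue; combined with the fact that this submatrix has non-negative trace (since each $c_\gamma \geq 0$), a brief case analysis forces $H_{ii}H_{jj} \leq H_{ij}^2$. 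Substituting and using the identity $(\alpha+e_i-e_j)!\,(\alpha-e_i+e_j)!/(\alpha!)^2 = (\alpha_i+1)(\alpha_j+1)/(\alpha_i\alpha_j) \geq 1$ then yields both displayed inequalities simultaneously.

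For the continuous statement, the plan is to verify directly that the Hessian of $\log f$ is negative semi-definite on $\R^k_{>0}$. A quick differentiation reduces this, for each $x_0 \in \R^k_{>0}$ and $v \in \R^k$, to the inequality $f(x_0)\, v^{T} H_f(x_0)\, v \leq (\nabla f(x_0) \cdot v)^{2}$. Using Euler's identities for the degree-$n$ homogeneous polynomial $f$, namely $x_0^{T} H_f(x_0)\, x_0 = n(n-1)\,f(x_0)$ and $H_f(x_0)\, x_0 = (n-1)\,\nabla f(x_0)$, this is implied by the reverse Cauchy--Schwarz inequality
\[
\bigl(x_0^{T} H_f(x_0)\, x_0\bigr)\bigl(v^{T} H_f(x_0)\, v\bigr) \;\leq\; \bigl(x_0^{T} H_f(x_0)\, v\bigr)^{2},
\]
which is the standard reverse inequality for a symmetric matrix with at most one positive eigenvalue possessing a distinguished vector $x_0$ satisfying $x_0^{T} H_f(x_0)\, x_0 = n(n-1)\,f(x_0) > 0$.

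The principal obstacle is that the Lorentzian definition supplies the eigenvalue condition only for the constant Hessians of $(n-2)$-fold partial derivatives, whereas the continuous argument above requires this for $H_f(x_0)$ at an arbitrary positive $x_0$. My plan to bridge this gap is to first establish the closure property that the cone of Lorentzian polynomials of a given degree is stable under directional differentiation $D_v = \sum_p v_p \partial_p$ for every $v \in \R^k_{\geq 0}$: stability under each $\partial_p$ is built into the definition, and stability under positive linear combinations reduces to checking that the set of symmetric matrices with at most one positive eigenvalue behaves well under the relevant convex combinations of Hessians. Iterating this closure $n-2$ times with $v = x_0$ then produces a constant-coefficient quadratic $D_{x_0}^{\,n-2} f$ whose Hessian is a positive scalar multiple of $H_f(x_0)$ via the multinomial Taylor expansion of a homogeneous polynomial, transferring the at-most-one-positive-eigenvalue property from the definition to $H_f(x_0)$ itself and completing the reduction.
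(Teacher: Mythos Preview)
The paper does not prove this proposition at all; it is quoted directly from \cite{bh20} with the citation ``\cite[Theorem 2.30; Proposition 4.4]{bh20}'' in the header, so there is no in-paper argument to compare against. What follows is an assessment of your attempt on its own merits.

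Your discrete argument is correct. (Your displayed Hessian entries are each off by the common scalar $\beta!$ from the literal values of $\partial_r\partial_s\partial^\beta f$, presumably because you are using the normalization $\tfrac{1}{\beta!}\partial^\beta$; since this rescales the entire $2\times 2$ block, the inequality $H_{ii}H_{jj}\le H_{ij}^2$ is unaffected and your substitution goes through.)

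For the continuous part, your reduction is sound up to the point you yourself flag: once one knows that $H_f(x_0)$ has at most one positive eigenvalue for every $x_0\in\R^k_{>0}$, the reverse Cauchy--Schwarz step and the Euler identities give log-concavity exactly as you wrote. The gap is in your bridging plan. Closure of the Lorentzian cone under $D_v=\sum_p v_p\partial_p$ for $v\ge 0$ is indeed the key fact (and your computation that the Hessian of $D_{x_0}^{\,n-2}f$ equals $(n-2)!\,H_f(x_0)$ is correct), but your proposed justification --- that it ``reduces to checking that the set of symmetric matrices with at most one positive eigenvalue behaves well under the relevant convex combinations of Hessians'' --- does not work as stated. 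Nonnegative combinations of symmetric matrices each having at most one positive eigenvalue can have many positive eigenvalues (e.g.\ $\mathrm{diag}(1,0)+\mathrm{diag}(0,1)=I$), so the signature condition is \emph{not} preserved by naive convex combination. In \cite{bh20} this closure is exactly the substantive content of the equivalence theorem for Lorentzian polynomials: one needs the M-convexity of the support (to guarantee a suitable connectedness among the Hessians $H_{\partial_p h}$ of a Lorentzian cubic $h$) together with a lemma to the effect that if each $A_p$ has at most one positive eigenvalue and there is a common vector $u$ with $u^\top A_p u>0$ linking them, then $\sum_p v_pA_p$ again has at most one positive eigenvalue. Without supplying that structural argument, the continuous half of your proof is incomplete.
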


We used SageMath \cite{Sage-Combinat} to check the conditions in Definition~\ref{def:lor}, and verified the following conjecture for all Dyck paths of length $n\leq 7$, with \(k\leq 8\) variables\footnote{Liu and Vinzant (private communication) found a counterexample to this conjecture for $n=8$. See Appendix~\ref{appendix:counterex}.}.

\begin{conjecture}\label{conj:dyckLor}
Let $d$ be a Dyck path. Then $X_{G(d)}$, restricted to any finite number of variables, is Lorentzian.
\end{conjecture}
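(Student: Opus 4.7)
The first of the two defining conditions of Lorentzianity (Definition~\ref{def:lor}) is already in hand: Theorem~\ref{thm:p_lambda} identifies $\supp(X_{G(d)}(x_1,\ldots,x_k))$ with the lattice points of the permutahedron $\mathcal{P}^{(k)}_{\lambda^{\gr}(d)}$, which is a generalized permutahedron and hence M-convex. The plan is therefore to concentrate entirely on the Hessian condition: for every sequence $i_1,\ldots,i_{n-2}\in[k]$, the Hessian of $\frac{\partial}{\partial x_{i_1}}\cdots\frac{\partial}{\partial x_{i_{n-2}}} X_{G(d)}(x_1,\ldots,x_k)$ must have at most one positive eigenvalue.

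My first line of attack would be to prove the stronger stability statement of Conjecture~\ref{conj:dyckstab}, since stable polynomials with nonnegative coefficients are automatically Lorentzian. In the abelian case, Theorem~\ref{thm: abel Lor} leverages rook theory via the real-rootedness of the hit polynomial of a Ferrers board. To extend this, I would iteratively apply the Guay-Paquet modular relation (Theorem~\ref{thm: decomposing bipartite}) together with the rook-theoretic interpretation of the probabilities $q_j$ outlined after that theorem, expressing a general $X_{G(d)}$ as a positive combination of chromatic symmetric functions for part listings whose bicolored pieces have been resolved into stacks $U_j$ or $D_j$. One would then need a multivariate stability statement that survives this convex-combination assembly, for instance via the Borcea--Br\"and\'en closure operations or by proving stability of the abelian ``building blocks'' in a form strong enough to be glued along Dyck connectors.

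An alternative, more geometric approach is to realize $X_{G(d)}$ (or a suitable normalization) as a volume polynomial on the regular semisimple Hessenberg variety $\Hess(h_d)$, exploiting the Shareshian--Wachs / Brosnan--Chow dictionary between $X_{G(d)}$ and the cohomology of $\Hess(h_d)$. Volume polynomials of projective varieties are prototypical Lorentzian polynomials by Hodge--Riemann relations, so such an identification would settle the conjecture at a single stroke. The main obstacle, regardless of approach, is that Lorentzianity is rigid under positive linear combinations: neither the Schur positivity of Corollary~\ref{thm: max Schur expansion} nor the conjectural $e$-positivity decomposes $X_{G(d)}$ into Lorentzian summands, since even though each normalized Schur polynomial is Lorentzian, reassembling them with the Kostka-type coefficients appearing in $X_{G(d)}$ can destroy the single-positive-eigenvalue condition. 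The crux will be to locate a single object---a stable polynomial, a Hodge-theoretic intersection form on a Hessenberg variety, or a Lorentzian polynomial on an enlarged variable set---from which $X_{G(d)}$ can be recovered by operations (partial differentiation, specialization, symmetric group averaging) known to preserve Lorentzianity.
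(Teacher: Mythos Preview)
The statement you are attempting is Conjecture~\ref{conj:dyckLor}, which is \emph{open} in the paper; there is no proof to compare against. The paper establishes only the abelian case (Theorem~\ref{thm: Lorentzian abelian case}), and it does so by none of your proposed routes but by direct computation: the monomial expansion~\eqref{eq: monomial abelian case} has support in $\{0,1,2\}^k$, so the relevant Hessians are explicit block matrices $M_{p,q}(a,b,c)$ whose characteristic polynomials are computed exactly (Proposition~\ref{prop: det block matrix}), and the single-positive-eigenvalue condition is reduced to two scalar inequalities verified via rook-theoretic log-concavity (Lemma~\ref{lem:UULC rooks}, which rests on real-rootedness of the hit polynomial for Ferrers boards). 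None of this extends in any evident way beyond the abelian case.

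Your write-up is an honest research sketch rather than a proof, and you correctly isolate the central obstruction (Lorentzianity is not closed under nonnegative linear combination). But each proposed line has a concrete gap. Your first approach misapplies Theorem~\ref{thm: decomposing bipartite}: that relation resolves bicolored-graph parts of a $(3{+}1)$-free poset \emph{into} unit interval orders, so Dyck paths are already the terminal objects of the decomposition, not something it can further reduce; in particular it does not express a general $X_{G(d)}$ as a convex combination of abelian pieces. Your second approach would require identifying $X_{G(d)}$ itself with a volume polynomial on $\Hess(h_d)$, but the Shareshian--Wachs/Brosnan--Chow dictionary matches $X_{G(d)}$ to the Frobenius character of the dot action on cohomology, which is a different object from a volume polynomial, and no such identification is known. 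In short, neither route closes the gap you yourself flagged, and the conjecture remains open.
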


Theorem~\ref{thm: abel Lor} verifies this conjecture for abelian Dyck paths. Graph colorings have many other interesting log-concavity properties like the following result of Huh on chromatic polynomials of graphs.

\begin{theorem}[{\cite{h12}}]
Let $\chi_G(q) = a_n q^n - a_{n-1}q^{n-1} + \cdots + (-1)^n a_0$ be the chromatic polynomial of a graph $G$.  Then, the sequence $a_0,\ldots,a_n$ is log-concave.
\end{theorem}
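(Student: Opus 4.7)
The plan is to fit the theorem into the Lorentzian framework of the paper, using a bivariate polynomial built from the graphic matroid $M(G)$. First, I would invoke Whitney's classical identity
\[
\chi_G(q) = q^{c(G)}\,\overline{\chi}_{M(G)}(q),
\]
where $c(G)$ is the number of connected components of $G$ and $\overline{\chi}_{M(G)}$ is the reduced characteristic polynomial of the graphic matroid $M(G)$. Multiplication by $q^{c(G)}$ only shifts the coefficient sequence by zeros, so log-concavity of $a_0,\ldots,a_n$ is equivalent to log-concavity of the unsigned coefficients $w_0,\ldots,w_r$ of $\overline{\chi}_{M(G)}$, i.e., the Whitney numbers of the first kind of $M(G)$, where $r$ is the rank of $M(G)$.

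Next, I would homogenize by setting $p(x,y) = \sum_{i=0}^{r} w_i\, x^i\, y^{r-i}$. The central step is to show that $p(x,y)$ is Lorentzian in the sense of Definition~\ref{def:lor}. Its support lies on a single diagonal of $\mathbb{N}^2$ and is therefore trivially M-convex, so only the Hessian spectral condition requires work. Because the graphic matroid is representable (over every field), one can realize the $w_i$ as intersection numbers of nef divisor classes on the wonderful compactification of the complement of any hyperplane arrangement representing $M(G)$; Br\"and\'en--Huh's characterization of Lorentzian polynomials as limits of volume polynomials of nef classes then applies to $p(x,y)$. Once $p(x,y)$ is known to be Lorentzian, the discrete log-concavity inequality of Proposition~\ref{thm: log concavity of LP} specializes to
\[
w_i^2 \;\geq\; w_{i-1}\, w_{i+1},
\]
which translates directly into $a_i^2 \geq a_{i-1}\, a_{i+1}$ for all $i$, proving the theorem.

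The main obstacle is the Lorentzian property of $p(x,y)$, which is equivalent to the Rota--Heron--Welsh log-concavity conjecture for representable matroids restricted to the graphic case. This step is genuinely hard and is the content of Huh's original paper: the proof proceeds via the geometry of a blow-up of projective space along a generic hyperplane arrangement representing $M(G)$, identifying the $w_i$ with mixed multiplicities and invoking the Khovanskii--Teissier (equivalently, Hodge index) inequalities. Alternatively, the combinatorial Hodge theory of Adiprasito--Huh--Katz gives a purely matroid-theoretic route that works for arbitrary matroids and specializes to the graphic case, bypassing the need for an explicit linear realization.
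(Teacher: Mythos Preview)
The paper does not prove this theorem at all: it is quoted, with the attribution \cite{h12}, purely as motivation for Conjecture~\ref{conj:dyckLor}, and no argument is supplied. So there is nothing in the paper to compare your proposal against.

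That said, your sketch is a reasonable high-level summary of the ideas behind Huh's original argument, with one caveat. You frame the proof through the Lorentzian machinery of Br\"and\'en--Huh, but that framework postdates \cite{h12} by several years; Huh's 2012 paper works directly with mixed multiplicities on a suitable blow-up and the Khovanskii--Teissier/Hodge index inequalities, without the abstraction of Lorentzian polynomials. Your proposal is thus more of a retrospective repackaging than a reconstruction of the original proof. You correctly identify that the entire difficulty lies in the Hessian/log-concavity step and that this is precisely the content of \cite{h12}; in that sense your ``proof'' is really a pointer back to the cited source, which is also all the paper itself does.
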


We now strengthen Conjecture~\ref{conj:dyckLor} to the class of stable polynomials, which are a multivariate version of real-rooted polynomials.  A polynomial $f \in \R[x_1,\ldots,x_k]$ is \emph{stable} if it has no roots in the product of $k$ open upper half-planes. We point to \cite{Wagner} for a survey on stable polynomials, as well as to the papers \cite{BB1,BB2} by Borcea and Br\"and\'en for more theory on stable polynomials.

We note that the class of Lorentzian polynomials agrees with the class of homogeneous stable polynomials for quadratic polynomials, but is larger for other degrees.  For example, (normalized) Schur polynomials are Lorentzian but not stable in general \cite[Example 9]{HMMStD}.

Unfortunately, checking stability is harder than checking the Lorentzian property.  In particular, one can check that a polynomial is stable by checking that an infinite number of certain univariate specializations are real-rooted \cite[Lemma 2.3]{Wagner}. Using SageMath \cite{Sage-Combinat}, we probed a random assortment of such univariate specializations to make the following conjecture.

\begin{conjecture}\label{conj:dyckstab}
Let \(d\) be a Dyck path. Then $X_{G(d)}$, restricted to any finite number of variables, is stable.
\end{conjecture}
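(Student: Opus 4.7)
The stability claim is substantially stronger than the Lorentzian claim of Conjecture~\ref{conj:dyckLor}, since for polynomials of degree at least three the class of stable polynomials is strictly contained in the class of Lorentzian polynomials, as the paper itself notes for normalized Schur polynomials. My plan would be to attempt a proof in two stages: first handling the abelian base case via rook theory, then attempting an induction along a vertex deletion that respects the interval-graph structure of $G(d)$.

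For abelian Dyck paths, the approach would be to upgrade the Haglund--Ono--Wagner real-rootedness of the hit polynomial $H_{B_\mu}(t) = \sum_j h_j(B_\mu) t^j$, which already underlies Theorem~\ref{thm: abel Lor}, from a univariate to a multivariate statement. Concretely, I would try to realize $X_{G(d)}(x_1,\ldots,x_k)$ as the image of an explicit stable polynomial (for instance, a product of elementary symmetric polynomials in two sets of variables, one per part of the co-bipartition) under a symmetrization-type linear operator, and then verify that this operator is stability-preserving by checking the Borcea--Brändén symbol criterion. The $2^i 1^{n-2i}$ shape of the support appearing in \eqref{eq:rel_rooks} makes this a plausible target, since stable polynomials supported on such two-level shapes are closely tied to real-rooted univariate polynomials via polarization.

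For a general Dyck path $d$, I would try an induction on $n$ using a perfect elimination ordering on $G(d)$, for example the one induced by the bounce path. Deleting a simplicial vertex $v$ with clique neighborhood $N(v)$ should give an inclusion-exclusion formula expressing $X_{G(d)}(x_1,\dots,x_k)$ in terms of $X_{G(d\setminus v)}(x_1,\dots,x_k)$ together with factors of the form $e_1 - \sum_{u\in N(v)} x_{f(u)}$. The goal would then be to exhibit the resulting transform as a stability preserver, for instance by polarizing and applying the Grace--Walsh--Szeg\H{o} theorem to reduce to a multi-affine statement that falls under the Borcea--Brändén classification.

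The principal obstacle is that stability is famously not preserved under nonnegative sums, so the modular relations of Guay-Paquet and Orellana--Scott that drove Theorem~\ref{thm:p_lambda 3+1 free} cannot be used here: any induction must proceed one Dyck path at a time through stability-preserving operators, and it is not at all clear a priori which operator to use. A secondary difficulty is that the Lorentzian property requires only finitely many Hessian-eigenvalue checks, whereas stability is a global analytic condition on products of upper half-planes, so even a proof of Conjecture~\ref{conj:dyckLor} would not by itself imply Conjecture~\ref{conj:dyckstab}. For these reasons I expect the conjecture to be genuinely harder than its Lorentzian analogue, and probably to require a genuinely new idea beyond the techniques developed in this paper.
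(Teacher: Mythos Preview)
The statement you are attempting to prove is Conjecture~\ref{conj:dyckstab}, and the paper does \emph{not} prove it. The paper offers only computational evidence: it reports having probed a random assortment of univariate specializations via the criterion of \cite[Lemma~2.3]{Wagner} using SageMath, and on that basis states the conjecture. There is therefore no paper proof to compare your proposal against.

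Your proposal is not a proof either, and you acknowledge as much in your final paragraph. What you have written is a reasonable sketch of two possible lines of attack together with an honest assessment of why they are likely to fall short. A few specific comments: your abelian plan to realize $X_{G(d)}$ as the image of a stable polynomial under a stability-preserving operator is plausible in spirit, but you have not specified the operator, and the Borcea--Br\"and\'en symbol criterion is notoriously delicate to verify in practice. For the general case, the inclusion-exclusion you describe after deleting a simplicial vertex will produce signed combinations, and signed combinations are even worse than nonnegative sums for preserving stability; the expression ``$e_1 - \sum_{u\in N(v)} x_{f(u)}$'' is not well-defined since $f$ ranges over colorings, so it is unclear what transform you actually intend. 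Your own diagnosis that the modular-relation approach is unavailable here is correct and is the key structural reason the paper leaves this as a conjecture.

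In short: neither you nor the paper has a proof, and your proposal should be read as a discussion of obstacles rather than as an argument to be checked.
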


\begin{example}
For the Dyck path $d=\sfn\sfn\sfe\sfn\sfe\sfe\sfn\sfe$, we have that $\lambda^{\gr}(d)=(3,1)$, $X_{G(d)} =
24 m_{1111} + 8m_{211} + 2m_{22} + m_{31}$, and $\Newton(X_{G(d)}(x_1,\ldots,x_k))=\mathcal{P}^{(k)}_{31}$. One can check that $X_{G(d)}$ is Lorentzian and see Figure~\ref{fig:ex newton polytope coeffs} for a diagram of its Newton polytope with coefficients exhibiting log-concavity in root directions.
\end{example}

We conclude this subsection with an example showing that incomparability graphs of (3+1)-free posets are not Lorentzian, and thus not stable. 

\begin{example}\label{ex:faillor}

Let \(G = C_4\) be the \(4\)-cycle, which is co-bipartite. Note that \(G\) is the incomparability graph of the (2+2)-poset, which is (3+1)-free. It has chromatic symmetric function  $X_{C_4} = 24 m_{1111} + 4m_{211} + 2m_{22}$.
The polynomial $f=X_{C_4}(x_1,\ldots,x_5)$ is M-convex but is not Lorentzian since the quadratic form associated to $\dfrac{\partial}{\partial x_1} \circ \dfrac{\partial}{\partial x_2} f$, which has matrix
\[
A= \begin{pmatrix}
0 & 8 & 8 & 8 & 8 \\
8 & 0 & 8 & 8 & 8 \\
8 & 8 & 8 & 24 & 24 \\
8 & 8 & 24 & 8 & 24 \\
8 & 8 & 24 & 24 & 8
\end{pmatrix},
\]
with characteristic polynomial $(x + 8)(x+16)^2(x^2 - 64x + 64)$, has two positive eigenvalues.
\end{example}

\begin{figure}
    \centering
    \includegraphics[scale=0.4]{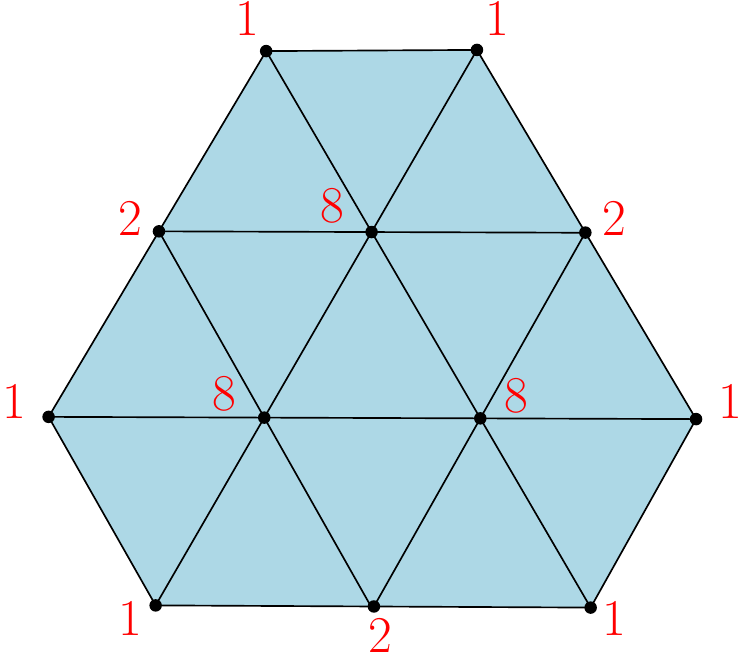}
    \caption{The Newton polytope $\mathcal{P}^{(3)}_{31}$  of $X_{G(d)}(x_1,x_2,x_3)$, for $d=\sfn \sfn\sfe \sfn\sfe\sfe\sfn\sfe$, with the coefficient of each lattice point in \textcolor{red}{red}.}
    \label{fig:ex newton polytope coeffs}
\end{figure}

\subsection{Lorentzian property for abelian Dyck paths}

In this section we verify Conjecture~\ref{conj:dyckLor} for abelian Dyck paths, i.e. paths whose indifference graphs $G(d)$ are co-bipartite.

\begin{theorem} \label{thm: Lorentzian abelian case}
Let $d$ be an abelian Dyck path. Then $X_{G(d)}$ is Lorentzian.
\end{theorem}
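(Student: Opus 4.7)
The plan is to verify the two defining conditions of a Lorentzian polynomial (Definition~\ref{def:lor}). The first--M-convexity of $\supp X_{G(d)}$--is already in hand: Corollary~\ref{cor: M-convex for Dyck paths} says this support equals the lattice points of the permutahedron $\mathcal{P}^{(k)}_{\lambda^{\gr}(d)}$, and is therefore M-convex. What remains is the Hessian condition: for every $\beta \in \mathbb{N}^k$ with $|\beta| = n-2$, the Hessian of $\partial^\beta X_{G(d)}$ has at most one positive eigenvalue.

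Since $d$ is abelian, $G(d)$ is co-bipartite, encoded by a Ferrers board $B_\mu \subseteq [n_1] \times [n_2]$ with $n = n_1 + n_2$, and by Lemma~\ref{lem:StanleyStembridgeCobipartite},
\[
X_{G(d)} \;=\; \sum_{i}\, i!\,(n-2i)!\,r_i(B_\mu)\, m_{2^i 1^{n-2i}}.
\]
Every exponent vector in $\supp X_{G(d)}$ has entries in $\{0,1,2\}$, so $\partial^\beta X_{G(d)}$ vanishes unless $\beta \in \{0,1,2\}^k$. Symmetry of $X_{G(d)}$ further reduces the analysis to the type $(a,b,c) := (|\{i\colon\beta_i=2\}|,\,|\{i\colon\beta_i=1\}|,\,|\{i\colon\beta_i=0\}|)$. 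A direct computation shows that, after dropping the zero rows and columns indexed by $\{i\colon\beta_i=2\}$, the Hessian takes the block form
\[
\mathcal{H} \;=\; \begin{pmatrix} (C-B)\,I_c + B\, J_c & C\,\mathbf{1}_c \mathbf{1}_b^T \\[2pt] C\,\mathbf{1}_b \mathbf{1}_c^T & D(J_b - I_b) \end{pmatrix},
\]
where $B = 2^a a!(b+2)!\,r_a$, $C = 2^{a+1}(a+1)!\,b!\,r_{a+1}$, and $D = 2^{a+2}(a+2)!(b-2)!\,r_{a+2}$, with $I_m, J_m, \mathbf{1}_m$ the identity, all-ones matrix, and all-ones vector of size $m$.

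The spectrum of $\mathcal{H}$ decomposes into three pieces: the value $C-B$ with multiplicity $c-1$ (from vectors on the $I$-block orthogonal to $\mathbf{1}_c$), the non-positive value $-D$ with multiplicity $b-1$ (similarly on the $J$-block), and two eigenvalues coming from the restriction of $\mathcal{H}$ to the invariant plane spanned by $(\mathbf{1}_c,0)$ and $(0,\mathbf{1}_b)$. To ensure at most one positive eigenvalue, it suffices to verify that $C \leq B$ (so the first block contributes no positive eigenvalues) and that the $2\times 2$ block has non-positive determinant or non-positive trace. Both inequalities are log-concavity statements in the consecutive rook numbers $r_a, r_{a+1}, r_{a+2}$, and both follow from the Haglund--Ono--Wagner theorem \cite{HOW}: the hit polynomial $H_{B_\mu}(t) = \sum_j h_j(B_\mu)\,t^j$ is real-rooted. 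Since its coefficients are non-negative, all its roots lie in $(-\infty,0]$, and by Newton's inequalities the sequence $(h_j)$ is ultra log-concave. Inverting the standard relation $r_i(n_2-i)!/(n_2-n_1)! = \sum_j \binom{j}{i} h_j$ transports this to the rook numbers and yields the required inequalities.

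The main obstacle is the delicate bookkeeping in translating ultra log-concavity of $(h_j)$ into the two precise rook-number inequalities above, especially across the degenerate regimes $c \leq 2$, $b \leq 1$, or $r_{a+2} = 0$. Fortunately, the M-convexity already established rules out internal zeros in the sequence $(r_i)$ (equivalently, the nonzero $r_i$'s form an initial contiguous block $r_0, r_1, \ldots, r_{i_{\max}}$, consistent with non-negativity and real-rootedness of $H_{B_\mu}(t)$), which confines each boundary case to a handful of manageable sub-cases.
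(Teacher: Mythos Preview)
Your approach is essentially the paper's: the same block Hessian (your $B,C,D$ are the paper's $c,b,a$), the same spectral decomposition into eigenvalues $-D$ (multiplicity $b-1$), $C-B$ (multiplicity $c-1$), and a remaining $2\times 2$ piece, and the same two inequalities. The paper extracts the eigenvalues via a Schur-complement computation of the characteristic polynomial (Proposition~\ref{prop: det block matrix}), while you read them off directly from the $I/J$ block structure; these are equivalent.

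One point to tighten: you assert that \emph{both} inequalities ``are log-concavity statements in $r_a,r_{a+1},r_{a+2}$'' and follow from Haglund--Ono--Wagner. The determinant condition does (this is the paper's Lemma~\ref{lem:UULC rooks} and Proposition~\ref{prop:2nd condition holds}), but $C\le B$ is a two-term ratio bound
\[
2(a+1)\,r_{a+1}\ \le\ (n-2a)(n-2a-1)\,r_a,
\]
not a three-term log-concavity statement, and ultra log-concavity of the hit numbers does not by itself bound consecutive ratios of rook numbers. The paper handles this separately (Proposition~\ref{prop:1st condition holds}) by the elementary overcount $(j{+}1)r_{j+1}\le (n_1-j)(n_2-j)r_j$ together with $n_1+n_2-2j\ge 2\min(n_1,n_2)-2j$; you should invoke that argument (or an equivalent one) rather than HOW for this step.
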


\begin{proof}
Let $d$ be an abelian path of size $n=n_1+n_2$ whose co-bipartite indifference graph  $G(d)$ has vertex set $\{1,\ldots,n_1\}\cup \{n_1+1,\ldots,n_1+n_2\}$ and is encoded by a Ferrers boards $B_{\mu} \subset [n_1]\times [n_2]$ of partitions $\mu=(\mu_1,\ldots,\mu_{\ell})$. By \eqref{eq: monomial expansion co-bipartite} we have that 
\begin{equation} \label{eq: monomial abelian case}
X_{G(d)}= \sum_{i} i! \cdot (n-2i)!\cdot r_i \cdot m_{2^i1^{n-2i}},
\end{equation}
where $r_i=r_i(B_{\mu})$ is the number of placements of $i$ non-attacking rooks in $B_{\mu}$. 

By Corollary~\ref{cor: M-convex for Dyck paths} we know that $X_{G(d)}$ is M-convex.
By Definition~\ref{def:lor}, showing that the symmetric polynomial \(X_{G(d)}(x_1, \ldots, x_k)\) is Lorentzian amounts to checking that for each partition ${\bf \alpha}=(\alpha_1,\ldots,\alpha_k)$ of $n-2$, the $k\times k$ matrix $H_{\bf \alpha}=( ({\bf \alpha} + e_r+e_s)!\cdot c_{{\bf \alpha}+e_r+e_s})_{r,s=1}^k$ has at most one positive eigenvalue, where \(c_{\bf \alpha}\) is the coefficient of \({\bf x}^{\bf \alpha}\) in \(X_{G(d)}\).

By \eqref{eq: monomial abelian case}, the support of $X_{G(d)}(x_1, \ldots, x_k)$ is in \(\{0,1,2\}^k \subset \mathbb{N}^k\).  Thus, for $k >  n$ variables, we only have to consider the matrices $H_{\alpha}$ of the partition 
\begin{equation} \label{possible comps}
{\bf \alpha}=(2^{i-1},1^{n-2i},0^{k-n+i+1}),
\end{equation}
for $i \geq 1$. The matrix $H_{\bf \alpha}$ has the form 
\[
H_{\bf \alpha} = 
\raisebox{-18pt}{\includegraphics{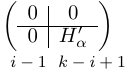}}, \quad H'_{\bf \alpha}= M_{n-2i-1,k-n+i}(a,b,c)  
\,\,\text{where}\,\, 
\begin{array}{c@{}c@{}l}
a&\,=&\, 2^{i+1}\cdot(i+1)!\cdot (n-2i-2)!\cdot  r_{i+1}\\
b&\,=&\,2^i \cdot i!\cdot (n-2i)!\cdot r_i\\
c&\,=&\,2^{i-1}\cdot (i-1)!\cdot (n-2i+2)!\cdot r_{i-1},
\end{array}
\]
and $M_{p,q}(a,b,c)$ is the block matrix in Figure~\ref{fig: block matrix}. 

The characteristic polynomial of $H'_{\bf \alpha}$ is given, via Proposition~\ref{prop: det block matrix}, by
\begin{multline} \label{eq: char poly}
\det(xI-H'_{\bf \alpha}) = (x+a)^{n-2i-1}\cdot (x-b+c)^{k-n+i} \cdot \left(x^2 - ((n-2i-1)a+b +(k-n+i)c)x  \right.\\
\left. -(n-2i)(k-n+i+1)b^2 + (n-2i-1)a(b+(k-n+i)c)\right).
\end{multline}

So  $X_{G(d)}$ is Lorentzian if and only if the polynomial in \eqref{eq: char poly} always has at most one positive root. This fact is implied by the following inequalities:
\begin{align}
b-c &\leq 0, \label{1st condition} \\
-(n-2i)(k-n+i+1)b^2 + (n-2i-1)a(b+(k-n+i)c) &\leq 0 \label{2nd condition},
\end{align}
where \eqref{1st condition} comes from a root of the polynomial and \eqref{2nd condition} follows from the quadratic formula. These two inequalities are verified in Propositions~\ref{prop:1st condition holds} and \ref{prop:2nd condition holds} below.
\end{proof}

The next result gives a formula for the characteristic polynomials of block matrices like $H'_{\alpha}$. For indeterminates $a,b,c$  and nonnegative integers $p,q\geq 0$ let $M_{p,q}(a,b,c)$ be the block matrix in Figure~\ref{fig: block matrix}.

\begin{figure}
    \centering
      \centering
  \begin{subfigure}[b]{0.27\textwidth}
    \centering
     \includegraphics{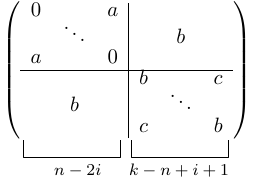}
    \caption{}
    \label{fig: block matrix}
\end{subfigure}
\begin{subfigure}[b]{0.7\textwidth}
  \centering
     \includegraphics[scale=0.96]{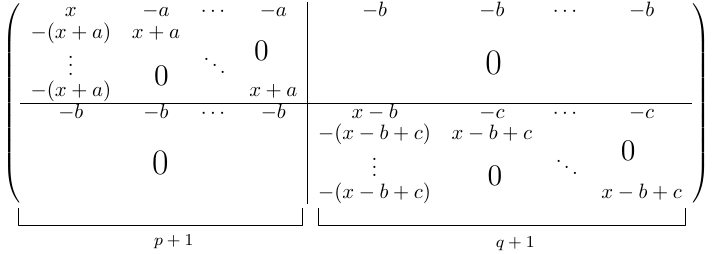}
  \caption{}
  \label{fig: block matrix row reduction}
\end{subfigure}
    \caption{(A) The block matrix $M_{p,q}(a,b,c)$ and (B) the block matrix $N_{p,q}(x;a,b,c)$ obtained from  $xI-M_{p,q}(a,b,c)$ by doing certain row operations.}
\end{figure}

\begin{proposition} \label{prop: det block matrix}
For indeterminates $a,b,c$ and nonnegative integers $p,q\geq 0$, the matrix $M_{p,q}(a,b,c)$ has characteristic polynomial
\[
\det(xI-M_{p,q}(a,b,c)) = (x+a)^p(x-b+c)^q(x^2-x(pa+b+qc)-(p+1)(q+1)b^2+pa(b+qc)).
\]
\end{proposition}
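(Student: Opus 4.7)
The plan is to compute $\det(xI - M_{p,q}(a,b,c))$ directly via elementary row and column operations, following the reduction suggested by Figure~\ref{fig: block matrix row reduction}. The claim is that the $(p+q+2) \times (p+q+2)$ block matrix $M_{p,q}(a,b,c)$ has a $(p+1)\times(p+1)$ upper-left block with constant off-diagonal entry $a$, a $(q+1)\times(q+1)$ lower-right block with diagonal $b$ and off-diagonal $c$, and all cross-block entries equal to $b$; the constant-block structure is what makes row reduction so effective.

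First, I would subtract row $1$ from each of rows $2, \ldots, p+1$ and subtract row $p+2$ from each of rows $p+3, \ldots, p+q+2$. Because the entries within each block are constant away from the diagonal and the cross blocks are uniform, these subtractions kill all but two entries in each affected row. Specifically, rows $2, \ldots, p+1$ end up with $-(x+a)$ in column $1$, $+(x+a)$ in column $i$, and $0$ elsewhere, while rows $p+3, \ldots, p+q+2$ end up with $-(x-b+c)$ in column $p+2$, $+(x-b+c)$ in column $i$, and $0$ elsewhere.

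Next, I would add each of columns $2, \ldots, p+1$ to column $1$, and each of columns $p+3, \ldots, p+q+2$ to column $p+2$. These column operations annihilate the remaining entries in columns $1$ and $p+2$ of the affected rows, so that each of those $p+q$ rows now has exactly one nonzero entry (on the ``diagonal''). Meanwhile, the $(1,1)$-entry becomes $x - pa$ (since $p$ copies of $-a$ are added to the diagonal entry $x$), the $(1,p+2)$-entry becomes $-(q+1)b$, the $(p+2,1)$-entry becomes $-(p+1)b$, and the $(p+2,p+2)$-entry becomes $x-b-qc$.

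After these operations, cofactor expansion along the $p+q$ rows with a single nonzero entry extracts a factor of $(x+a)^p(x-b+c)^q$, reducing the problem to the $2\times 2$ determinant
\[
\det\begin{pmatrix} x - pa & -(q+1)b \\ -(p+1)b & x - b - qc \end{pmatrix} = x^2 - (pa+b+qc)x + pa(b+qc) - (p+1)(q+1)b^2,
\]
which is precisely the claimed quadratic factor. The proof is entirely mechanical; the only ``obstacle'' is careful bookkeeping to confirm that the entries of rows $1$ and $p+2$ transform as stated after the column operations.
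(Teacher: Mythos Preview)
Your proof is correct. Both you and the paper begin with the same row subtractions (row~$1$ from rows $2,\ldots,p+1$; row~$p+2$ from rows $p+3,\ldots,p+q+2$), but you then diverge: the paper partitions the reduced matrix into blocks and applies the Schur complement formula $\det(A)\det(D-CA^{-1}B)$, computing each factor by a separate cofactor expansion, whereas you perform the symmetric column operations (adding columns $2,\ldots,p+1$ into column~$1$ and columns $p+3,\ldots,p+q+2$ into column~$p+2$) and then expand directly along the $p+q$ rows that have a single nonzero entry. Your route is slightly more elementary, avoiding the Schur complement and the computation of $CA^{-1}B$; the paper's route makes the block structure of the reduction more explicit. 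One minor point: your description of the upper-left block omits that its diagonal entries are~$0$ (this is what makes the column-$1$ entries come out to exactly $-(x+a)$), but this is implicit in the figure defining $M_{p,q}(a,b,c)$ and your arithmetic is consistent with it.
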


\begin{proof}
We subtract the first row of $xI-M_{p,q}(a,b,c)$ from rows $2$ to $p+1$ and we subtract row $p+2$ from rows $p+3$ to $p+q+2$ to obtain the matrix $N_{p,q}(x;a,b,c)$ in Figure~\ref{fig: block matrix row reduction}. The determinant remains unchanged. Next, we partition the matrix into the same blocks as in the figure and use the {\em Schur complement} (see \cite[\S 0.3]{SchCbook}) to calculate the determinant.  Hence
\begin{align}
\det(xI-M_{p,q}(a,b,c)) &= \det N_{p,q}(x;a,b,c) \notag \\
&= \det \left(\begin{array}{c|c} A & B \\ \hline  C & D \end{array}\right) = \det(A)\cdot \det(D - CA^{-1}B), \label{eq:Schur complement}
\end{align}
where 
\[
CA^{-1}B = \frac{(p+1)b^2}{x-pa}\begin{pmatrix}
1 &\cdots &1     \\
& &     \\
&0 &   \\
&  &  
\end{pmatrix}, \quad 
D- CA^{-1}B = \raisebox{-20pt}{\includegraphics{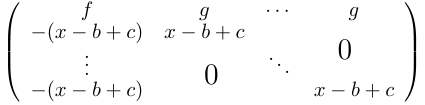}},
\]
for $f=x-b - (p+1)b^2/(x-pa)$ and $g=-c-(p+1)b^2/(x-pa)$. 
By doing a cofactor expansion, say on the first row of $A$ and $D-CA^{-1}B$, one readily obtains that 
\begin{align*}
\det(A) &= (x+a)^p\cdot (x-pa),\\
\det(D-CA^{-1}B) &= (x-b+c)^q(f +qg) \\
&=\frac{(x-b+c)^q}{x-pa} \left(x^2 - x(pa+b+qc) - (p+1)(q+1)b^2+pa(b+qc)\right).
\end{align*}
Using these two formulas in \eqref{eq:Schur complement} gives the desired result.
\end{proof}

The rest of this section is devoted to verifying \eqref{1st condition} and \eqref{2nd condition}. The next result shows \eqref{1st condition}, which is true for all co-bipartite graphs.

\begin{proposition}\label{prop:1st condition holds}
Let $G$ be a co-bipartite graph with vertex set $\{1,\ldots,n_1\}\cup \{n_1+1,\ldots,n_1+n_2\}$. Then \eqref{1st condition} holds for all \(i\); that is, for $i\geq 1$ we have 
\[
2\cdot i!\cdot (n_1+n_2-2i)!\cdot r_{i}(B)  \leq (i-1)!\cdot (n_1+n_2-2i+2)!\cdot r_{i-1}(B),
\]
where  $B \subset [n_1]\times [n_2]$ is the board associated to $G$.
\end{proposition}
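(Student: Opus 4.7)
The plan is to reduce the stated inequality to a double-counting bound followed by an elementary algebraic manipulation; no real-rootedness input is needed. Writing $n = n_1+n_2$ and dividing both sides by $(i-1)!\,(n-2i)!$, the desired inequality is equivalent to
\[
2i\cdot r_i(B) \;\leq\; (n-2i+2)(n-2i+1)\cdot r_{i-1}(B).
\]
When $i > \min(n_1,n_2)$ we have $r_i(B) = 0$ and the inequality is trivial, so I may assume $1\leq i\leq \min(n_1,n_2)$.

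The first step is a standard double-counting of pairs $(P,r)$ where $P$ is a placement of $i$ non-attacking rooks on $B$ and $r$ is a distinguished rook of $P$. Counting by $P$ first gives $i\cdot r_i(B)$. Conversely, deleting $r$ from $P$ produces a placement $P'$ of $i-1$ non-attacking rooks together with a cell $r\in B$ whose row and column are free in $P'$. Since there are exactly $(n_1-i+1)(n_2-i+1)$ cells in $[n_1]\times[n_2]$ with both row and column free in $P'$, and $B\subseteq [n_1]\times[n_2]$, I obtain
\[
i\cdot r_i(B) \;\leq\; (n_1-i+1)(n_2-i+1)\cdot r_{i-1}(B).
\]

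The second step is the purely arithmetic inequality $2(n_1-i+1)(n_2-i+1)\leq (n-2i+2)(n-2i+1)$. Setting $x = n_1-i+1 \geq 1$ and $y = n_2-i+1 \geq 1$, so that $x+y = n-2i+2$, this becomes $2xy \leq (x+y)(x+y-1)$, which rearranges to $x(x-1)+y(y-1) \geq 0$ and plainly holds for positive integers. Doubling the bound from the first step and combining with this inequality yields the claim.

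There is no real obstacle here; both steps are elementary. It is perhaps worth remarking that this argument uses only the crude upper bound coming from the ambient rectangle $[n_1]\times[n_2]$, rather than any finer log-concavity of the rook numbers $r_i(B)$, which reflects the fact that \eqref{1st condition} is linear in the rook numbers. The more delicate quadratic condition \eqref{2nd condition} established in Proposition~\ref{prop:2nd condition holds} is what will require the real-rootedness of the hit polynomial of a Ferrers board (i.e.\ the input from \cite{HOW}) and the abelian hypothesis.
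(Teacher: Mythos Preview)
Your proof is correct and follows essentially the same approach as the paper's: both arguments reduce the inequality to the double-counting bound $i\cdot r_i(B)\leq (n_1-i+1)(n_2-i+1)\cdot r_{i-1}(B)$ and then the arithmetic inequality $2(n_1-i+1)(n_2-i+1)\leq (n-2i+2)(n-2i+1)$. Your rearrangement of the latter as $x(x-1)+y(y-1)\geq 0$ is a bit cleaner than the paper's case analysis via a WLOG assumption $n_1\leq n_2$, but the substance is identical.
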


\begin{proof}
For convenience, we substitute \(j = i - 1\).
There are $(j+1)\cdot r_{j+1}(B)$ placements of $j+1$ non-attacking rooks in $B$ with a distinguished rook. An overcount of this quantity is the number of pairs $(p,c)$, where $p$ is a placement of $j$ non-attacking rooks in $B$ and $c$ is a cell in $[n_1]\times [n_2]$ in a different row and column than the $j$ rooks. Thus we have 
\begin{equation}
\label{ieq: key b<=c}
(j+1)\cdot r_{j+1}(B) \leq (n_1-j)(n_2-j)\cdot r_j(B).
\end{equation}
Without loss of generality, assume $n_1\leq n_2$. Then $n_1+n_2-2j \geq 2(n_1-j)$. The desired inequality is trivially true if \(j+1 > n_1\) or \(j+1 > n_2\), since then \(r_{j+1}(B) = 0\).
So, we can assume \(j + 1 \leq n_1 \leq n_2\).
Thus, $n_1+n_2-2j-1 \geq n_2-j$.
Thus we have that
\begin{align*}
2(n_1-j)(n_2-j)\cdot r_j(B) &\leq
(n_1+n_2-2j)(n_1+n_2-2j-1) \cdot r_j(B).
\end{align*}
Combining this inequality with \eqref{ieq: key b<=c}, we obtain
\[
2\cdot (j+1)\cdot r_{j+1}(B) \leq (n_1+n_2-2j)(n_1+n_2-2j-1)\cdot r_{j}(B),
\]
which is equivalent to the desired result. 
\end{proof}

We now verify \eqref{2nd condition}, which is true for Ferrers boards but not necessarily all boards (see Examples~\ref{ex:faillor} and \ref{ex: diagonal rooks}). We need the following lemma that follows from a result of Haglund--Ono--Wagner \cite{HOW} about the {\em ultra log-concavity} of {\em hit numbers} of Ferrers boards. Note that ultra log-concavity of rook numbers, which holds for all boards (see \cite{JH,HOW}), is not sufficient to ensure \eqref{2nd condition} (see Example~\ref{ex: diagonal rooks}).

\begin{lemma}\label{lem:UULC rooks}
Suppose \(\mu = (\mu_1, \ldots, \mu_\ell)\) is a partition. Then for $i\geq 1$, the rook numbers \(r_i \coloneq r_i(B_{\mu})\) satisfy 
\begin{equation} \label{eq: UULC rooks}
r_i^2 \geq \left(1+\frac{1}{i}\right)\left(1+\frac{1}{\ell-i}\right)\left(1 + \frac{1}{\mu_1 - i}\right) r_{i-1}r_{i+1}. 
\end{equation}

\end{lemma}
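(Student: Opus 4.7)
The plan is to derive the inequality from the real-rootedness of the hit polynomial of a Ferrers board, established in \cite[Theorem~1]{HOW}, combined with Newton's inequalities, after a translation by $1$.

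First I would reduce to the case $\ell \leq \mu_1$: the transpose of $B_\mu$ is the Ferrers board $B_{\mu'}$, which preserves all rook numbers, swaps the roles of $\ell$ and $\mu_1$, and leaves the right-hand side of \eqref{eq: UULC rooks} invariant. Embedding $B_\mu$ in the rectangle $[\ell]\times[\mu_1]$, I consider the hit polynomial $H_B(x) = \sum_{j=0}^{\ell} h_j x^j$ counting full placements of $\ell$ non-attacking rooks on $[\ell]\times[\mu_1]$ by number of rooks on $B$. A direct inclusion-exclusion argument (using $x^m = \sum_j \binom{m}{j}(x-1)^j$ and interchanging summations) yields the classical identity
\[
H_B(x) \;=\; \sum_{k=0}^{\ell} r_k \,(x-1)^k \,\frac{(\mu_1-k)!}{(\mu_1-\ell)!}.
\]
Equivalently, the translate $\widetilde H_B(y) \coloneq H_B(y+1) = \sum_k \widetilde r_k\, y^k$, where $\widetilde r_k \coloneq r_k (\mu_1-k)!/(\mu_1-\ell)!$, encodes the (rescaled) rook numbers as its coefficients.

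By \cite[Theorem~1]{HOW}, $H_B(x)$ is real-rooted; since its coefficients are nonnegative, all its roots lie in $(-\infty, 0]$, so $\widetilde H_B(y)$ is also real-rooted with roots in $(-\infty, -1]$ and hence has nonnegative coefficients $\widetilde r_k$. Applying Newton's inequalities to the degree-$\ell$ polynomial $\widetilde H_B$ yields
\[
\widetilde r_i^{\,2} \;\geq\; \widetilde r_{i-1}\,\widetilde r_{i+1}\,\Bigl(1+\tfrac{1}{i}\Bigr)\Bigl(1+\tfrac{1}{\ell-i}\Bigr).
\]
Substituting $\widetilde r_k = r_k(\mu_1-k)!/(\mu_1-\ell)!$ and simplifying the factorial ratio
\[
\frac{(\mu_1-i+1)!\,(\mu_1-i-1)!}{\bigl((\mu_1-i)!\bigr)^{2}} \;=\; \frac{\mu_1-i+1}{\mu_1-i} \;=\; 1 + \frac{1}{\mu_1-i}
\]
contributes precisely the third factor, producing \eqref{eq: UULC rooks}. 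Boundary cases (e.g.\ $i=\ell$ or $i=\mu_1$, where $r_{i+1}=0$) are trivial under the convention $\infty\cdot 0 = 0$.

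The argument is essentially routine once one recognizes that the translation $x \mapsto y+1$ is what converts real-rootedness of the hit polynomial into a statement about rescaled rook numbers; without this translation, Newton applied directly to $H_B$ would yield ultra log-concavity of the \emph{hit} numbers rather than the rook numbers. The main conceptual point --- and the only nontrivial observation --- is that the extra factor $(1+1/(\mu_1-i))$, which goes beyond the two factors supplied by Newton alone, emerges automatically from the factorial bookkeeping in the substitution, so there is no genuine obstacle beyond invoking \cite{HOW}.
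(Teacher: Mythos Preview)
Your proof is correct and follows essentially the same approach as the paper's. Both arguments reduce to $\ell \leq \mu_1$, shift the hit polynomial by $1$ so that its coefficients become $(\mu_1-k)!\,r_k$ up to a harmless scalar, invoke \cite[Theorem~1]{HOW} for real-rootedness, and then apply Newton's inequalities with formal degree $\ell$; the extra factor $(1+1/(\mu_1-i))$ drops out of the factorial ratio exactly as you describe. The only cosmetic difference is that you work with the rectangular hit polynomial on $[\ell]\times[\mu_1]$ while the paper uses the square $[\mu_1]\times[\mu_1]$, but these differ by the constant $(\mu_1-\ell)!$ and so the arguments are interchangeable.
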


\begin{proof}
The {\em hit} polynomial of a Ferrers board $B_{\mu} \subset [N] \times [N]$ is given by \[
T(x;\mu) \coloneq \sum_{i=0}^{N} (N-i)!\cdot r_i(B_{\mu})\cdot (x-1)^{i},
\]
where \(N\) must be big enough to contain \(\mu\). Assume without loss of generality that \(\mu_1 \geq \ell\), and take \(N = \mu_1\).

Haglund--Ono--Wagner \cite[Theorem 1]{HOW} showed that $T(x;\mu)$ is real-rooted, so this is also true for \(T(x+1;\mu)\). Furthermore, the degree of \(T(x+1;\mu)\) is at most \(\ell\), since no more than \(\ell\) rooks can be placed on \(\mu\).
Newton's inequality (see, e.g., \cite[p. 52]{HLP}) tells us that the coefficients of this polynomial are {\em ultra log-concave}. This means that the sequence
\[
\frac{(\mu_1-i)!}{\binom{\ell}{i}}r_i
\]
is log-concave. That is,
\[
i(\ell-i)\cdot (\mu_1-i)!^2 \cdot r_i^2\geq (i+1)(\ell-i+1)(\mu_1-i+1)! (\mu_1-i-1)!\cdot r_{i-1} \cdot r_{i+1}, 
\]
which is equivalent to the desired result.
\end{proof}

We are now ready to verify \eqref{2nd condition}.

\begin{proposition} \label{prop:2nd condition holds}
Equation~\eqref{2nd condition} holds.
\end{proposition}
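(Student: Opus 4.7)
The plan is to reduce \eqref{2nd condition} to two simpler inequalities by exploiting the decomposition $(n-2i)(k-n+i+1)b^2 = (n-2i)b^2 + (n-2i)(k-n+i)b^2$ and matching terms against the two summands on the left. Since $k-n+i \geq 0$, it suffices to establish the pair of $k$-free inequalities
\[
(n-2i-1)\,a \;\leq\; (n-2i)\,b \qquad \text{and} \qquad (n-2i-1)\,ac \;\leq\; (n-2i)\,b^2,
\]
since multiplying the first by $b$ and the second by $k-n+i$ and adding recovers \eqref{2nd condition}.

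The first inequality, after substituting the definitions of $a$ and $b$ and cancelling common factorials, becomes $2(i+1)r_{i+1} \leq (n-2i)^2 r_i$. This follows immediately by combining the rook-theoretic bound $(i+1)r_{i+1} \leq (n_1-i)(n_2-i)\, r_i$ from \eqref{ieq: key b<=c} with the AM-GM estimate $(n_1-i)(n_2-i) \leq (n-2i)^2/4$.

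For the second inequality, we may assume $r_{i-1}, r_i, r_{i+1}$ are all positive (else the claim is trivial), which forces $\ell \geq i+1$ and $\mu_1 \geq i+1$. After substituting and cancelling, the inequality becomes
\[
\frac{(i+1)(n-2i+1)(n-2i+2)}{i\,(n-2i)^2} \cdot \frac{r_{i-1}\,r_{i+1}}{r_i^2} \;\leq\; 1.
\]
The plan is to bound $r_{i-1}r_{i+1}/r_i^2$ from above using the ultra log-concavity of rook numbers from Lemma~\ref{lem:UULC rooks} (after possibly conjugating $\mu$ so that $\mu_1 \geq \ell$; the resulting bound is symmetric in $\ell$ and $\mu_1$). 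Setting $A = n-2i$, $B = \ell-i$, $C = \mu_1-i$, this reduces the problem to the algebraic inequality $(A+1)(A+2)\,BC \leq A^2(B+1)(C+1)$ subject to the combinatorial constraint $B+C \leq A$ coming from $\ell \leq n_1$ and $\mu_1 \leq n_2$. Expanding, this is equivalent to $BC(3A+2) \leq A^2(B+C+1)$, which the plan is to verify via $BC \leq (B+C)^2/4$ together with $B+C \leq A$, reducing to the elementary bound $A(S+4) \geq 2S$ for $S = B+C$.

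The main obstacle is the second inequality, which requires both the real-rootedness of hit polynomials for Ferrers boards (via Lemma~\ref{lem:UULC rooks}) and the dimensional constraint $B+C \leq A$; without the latter, the algebraic inequality fails in general, which is consistent with Example~\ref{ex:faillor} where the underlying board is not Ferrers.
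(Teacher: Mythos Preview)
Your proof is correct and takes essentially the same approach as the paper: both arguments hinge on the key inequality $(n-2i)b^2 \geq (n-2i-1)ac$, obtained from Lemma~\ref{lem:UULC rooks} together with the dimensional constraint $\ell+\mu_1\leq n$ (your $B+C\leq A$), and your algebraic reduction to $(A+1)(A+2)BC \leq A^2(B+1)(C+1)$ is exactly the passage the paper makes from \eqref{eq: UULC rooks} to \eqref{eq: UULC 2}. The only minor difference is in dispatching the residual $b$-term: you establish $(n-2i-1)a \leq (n-2i)b$ directly via \eqref{ieq: key b<=c} and AM--GM, whereas the paper multiplies \eqref{eq: UULC 3} by $k-n+i+1$ and then invokes \eqref{1st condition} (i.e.\ $b\leq c$) to replace the extra $ac$ by $ab$.
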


\begin{proof}
\(G(d)\) is encoded by a partition \(\mu = (\mu_1, \ldots, \mu_\ell)\) inside \([n_1]\times[n_2]\), with \(\text{deg}(X_{G(d)}) = n = n_1 + n_2\).
Assume without loss of generality that \(\mu_1 \geq \ell\).
By Lemma~\ref{lem:UULC rooks} the following inequality is true
\begin{equation}
    r_i^2 \geq \left(1+\frac{1}{i}\right)\left(1+\frac{1}{\ell-i}\right)\left(1+\frac{1}{\mu_1-i}\right) r_{i-1}r_{i+1}.
\end{equation}

Using \(i \leq \ell \leq \mu_1\) and \(\ell + \mu_1 \leq n\) gives
\begin{equation} \label{eq: UULC 2}
 r_i^2 \geq
    \left(1+\frac{1}{i}\right)\left(1+\frac{2}{n-2i}\right)\left(1+\frac{1}{n-2i}\right) r_{i-1}r_{i+1},
\end{equation}
which is equivalent to
\begin{equation} \label{eq: UULC 3}
    (n-2i)b^2 \geq (n-2i-1)ac.
\end{equation}

Multiplying both sides of this inequality by $k-n+i+1\geq 0$ and using 
\eqref{1st condition} gives the desired result.
\end{proof}

\begin{example} \label{ex: diagonal rooks}
Continuing with Example~\ref{ex:faillor}, the $4$-cycle $C_4$ is a co-bipartite graph associated to the diagonal board $B=\{(1,1),(2,2)\} \subset [2]\times [2]$. For this board we have that $r_0=1$, $r_1=2$, and $r_2=1$, so for $i=1$ we have  
\[
4=r_1^2 <  \left(1+\frac{1}{i}\right)\left(1+\frac{2}{n-2i}\right)\left(1+\frac{1}{n-2i}\right) r_{i-1}r_{i+1} = 2\cdot 2 \cdot \frac32 \cdot 1 \cdot 1.
\]
Thus \eqref{eq: UULC 2} does not hold. And for $k>4$ variables, neither do \eqref{eq: UULC 3} or \eqref{2nd condition}. 
\end{example}

\section{Further examples and conjectures} 
\label{sec: conjecture}

\subsection{Relation with the $\zeta$ map from diagonal harmonics} \label{sec: listing to Dyck paths}

We have two Dyck paths associated to a (3+1)- and (2+2)-free poset (i.e. a unit interval order) $P$ of size $n$: $P$ corresponds to an incomparability graph $G(d)$ of a Dyck path $d$ and  to a lex-maximal part listing $v_{a_1}\cdots v_{a_n}$ of an area sequence ${\bf a}= (a_1,\ldots,a_n)$ of a Dyck path $d'$ by Theorem~\ref{thm: char unit interval orders} and Remark~\ref{def: varphi area sequence}. Using FindStat \cite[\href{http://www.findstat.org/MapsDatabase/Mp00032oMp00028}{link}]{FindStat}, it appears that these Dyck paths are related by Haglund's well-known $\zeta$ map from diagonal harmonics (e.g. see \cite[Theorem 3.15]{HagqtCat}). See \cite[Remark 6.6]{FennSommers} for a similar statement in terms of ad-nilpotent ideals.

\begin{conjecture}\!\!\!\footnote{This conjecture has  been proved independently by G\'elinas--Segovia--Thomas \cite{gelinas2022proof} and by Fang \cite{fang2022bijective}.}  \label{conj: listing to Dyck path}
Let $P$ be a unit interval order corresponding to an incomparability graph $G(d)$ and a lex-maximal part listing encoded by a tuple  ${\bf a}=(a_1,\ldots,a_n)$. If $d'$ is the Dyck path with area sequence ${\bf a}$, then 
\[
d = \zeta(d').
\]
\end{conjecture}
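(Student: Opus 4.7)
The plan is to exhibit both $d$ and $\zeta(d')$ as Dyck paths extracted from the same combinatorial data, namely the tuple ${\bf a} = (a_1,\ldots,a_n)$, and then to match their area sequences (or equivalently, their Hessenberg functions) entry by entry. Recall that $\zeta$ admits several equivalent descriptions: via the bistatistic transformation $(\mathrm{area},\mathrm{bounce}) \mapsto (\mathrm{dinv},\mathrm{area})$, via its bounce path (whose bounce heights are the counts $m_i = \#\{k : a_k = i\}$), or via a ``diagonal reading'' algorithm that processes ${\bf a}$ level by level. I would use the diagonal reading form: for each level $i = 0, 1, 2, \ldots$ in increasing order, and for each index $k$ with $a_k = i$ in the order it occurs in ${\bf a}$, output a north step, followed by east steps dictated by the interleaving of level-$(i{+}1)$ indices lying after $k$ in ${\bf a}$.

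The next step is to understand how the canonical part listing $v_{a_1}\cdots v_{a_n}$ labels the vertices of the unit interval order $P$, and how this labeling relates to the Hessenberg labeling of $G(d)$. The key observation is that the ``listing labeling'' of $P$ (by position in ${\bf a}$) is generally different from the Hessenberg labeling of $G(d)$. Using the unit interval representation of $P$, one sees that the correct relabeling sorts vertices first by level $a_k$ ascending and then by position $k$ ascending; this is exactly the left-endpoint order. Under this relabeling, the Hessenberg function $h_d$ can be read off directly from ${\bf a}$: for the vertex of rank $i$, $h_d(i)$ equals the rank of the last vertex incomparable to it, and incomparability is controlled by the level structure (via conditions (i) and (ii) of the part listing convention).

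The final comparison step is to show that the Hessenberg function of $d$ computed this way agrees with the one extracted from the diagonal reading description of $\zeta(d')$. The structural reason is that in both constructions, the $i$-th north step corresponds to the same vertex of $P$ (the $i$-th in left-endpoint order), and the east steps immediately after encode the same comparability count, namely the number of indices at the next level whose intervals start after the current one's right endpoint. So the verification reduces to showing that two bookkeeping procedures count the same quantity.

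The main obstacle will be aligning conventions: the area sequence of a Dyck path, the Hessenberg function, the part listing, and the various descriptions of $\zeta$ are each encoded slightly differently in the literature, and making these conventions compatible is where most of the technical work lies. An attractive alternative is a purely bijective proof: since $\zeta$ is characterized by its compatibility with bounce and area statistics, one could verify the identity on a generating family (e.g.\ single-bounce paths and their concatenations) and lift by an inductive/recursive structure exploiting the fact that both sides respect decomposition of ${\bf a}$ at its zero entries. The two independent proofs cited in the footnote (Gélinas--Segovia--Thomas, and Fang) suggest that the result does admit a clean combinatorial argument of roughly this shape.
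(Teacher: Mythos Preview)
The paper does not contain a proof of this statement: it is stated as Conjecture~\ref{conj: listing to Dyck path}, and the footnote merely records that proofs have appeared elsewhere (G\'elinas--Segovia--Thomas \cite{gelinas2022proof} and Fang \cite{fang2022bijective}). There is therefore no ``paper's own proof'' to compare your proposal against.

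As for the proposal itself, what you have written is a plan rather than a proof. The strategy you outline---relabel the vertices of $P$ by level first and then by position within the listing, read off the Hessenberg function $h_d$ from the incomparability structure encoded in ${\bf a}$, and match this against the diagonal-reading description of $\zeta$---is plausible and is indeed in the spirit of the bijective arguments in the cited papers. But several of the steps you describe as routine are where the actual content lies. In particular: (i) the claim that sorting by $(a_k,k)$ recovers the Hessenberg labeling of $G(d)$ needs a proof, not an appeal to ``the unit interval representation''; (ii) the assertion that the east steps after the $i$th north step in the diagonal reading of $\zeta(d')$ count exactly the relevant comparability set must be checked carefully, since the standard descriptions of $\zeta$ (e.g.\ \cite[Theorem 3.15]{HagqtCat}) are phrased in terms of dinv/area/bounce rather than directly in terms of Hessenberg data; and (iii) your suggested alternative of decomposing at the zero entries of ${\bf a}$ does not obviously interact well with $\zeta$, which is a global bijection not built from concatenation. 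None of this is fatal, but as written the proposal identifies the right objects without supplying the verifications that constitute the proof.
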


\begin{example}
The unit interval order $P$ associated to the Dyck path $d=\sfn\sfn\sfe\sfe\sfn\sfn\sfe\sfe$ in Figure~\ref{fig:ex_path_graph} corresponds to the lex-maximal listing $v_{0}v_0v_1v_1v_1v_0$. The associated tuple ${\bf a} = (0,0,1,1,0)$ is the area sequence of the Dyck path $d'=\sfn\sfe\sfn\sfn\sfe\sfn\sfe\sfe\sfn\sfe$. One can check that $d=\zeta(d')$, as illustrated in Figure~\ref{fig: example zeta map}. For a larger example, see Figure~\ref{fig:big example Dyck path correspondence}.
\end{example}

\begin{figure}
    \centering
    \includegraphics{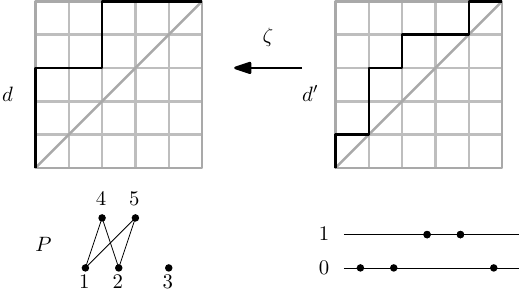}
    \caption{There are two Dyck paths associated to a unit interval order $P$: one is the path $d$ whose indifference graph is the incomparability graph of $P$, and the other one is the path $d'$ associated to the lex-maximal listing tuple ${\bf a}$. Conjecture~\ref{conj: listing to Dyck path} states that $d=\zeta(d')$ as illustrated in this example.}
    \label{fig: example zeta map}
\end{figure}

\subsection{Chromatic symmetric functions with reflexive Newton polytopes}

An important property in the {\em Ehrhart theory} of lattice polytopes, i.e. polytopes with integral vertices, is that of a  polytope being reflexive \cite{Br}. A lattice polytope $Q$ with ${\bf 0}$ in its interior is \emph{reflexive} if its polar (dual) $Q^*$ is a lattice polytope (see, e.g., \cite[Sec. 4.4]{CCD}). In \cite[Theorem 34]{bayer2020lattice} the authors characterized when a permutahedron $\mathcal{P}^{(k)}_{\lambda}$ is reflexive.  In Section~\ref{sec: case dyck paths} we showed that for a Dyck path $d$, the Newton polytope of $X_{G(d)}(x_1,\ldots,x_k)$ is the permutahedron $\mathcal{P}^{(k)}_{\lambda^{\gr}(d)}$. It would be interesting to use their characterization of reflexive permutahedra to find all Dyck paths $d$ for which the Newton polytope of $X_{G(d)}(x_1,\ldots,x_k)$ is reflexive.

\subsection{Unimodality of colorings}

Although we have not been able to show Conjecture~\ref{conj:dyckLor} for arbitrary indifference graphs, which by Proposition~\ref{thm: log concavity of LP} would imply log-concavity of the coefficients, the following weaker result follows from Gasharov's $s$-positivity of $X_{G(d)}$ \cite{G2}.

\begin{proposition}
For a Dyck path $d$ with $X_{G(d)} = \sum_{\lambda} c^d_{\lambda} \cdot m_{\lambda}$, if $\mu \succeq \nu$ then $c^d_{\mu} \leq c^d_{\nu}$. 
\end{proposition}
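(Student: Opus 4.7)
The plan is to reduce the statement to a classical monotonicity property of Kostka numbers by invoking Gasharov's Schur-positivity theorem cited in the excerpt. Since indifference graphs of Dyck paths are incomparability graphs of (3+1)- and (2+2)-free posets, Gasharov's result gives the $s$-expansion
\[
X_{G(d)} \;=\; \sum_{\lambda} f^{d}_{\lambda}\, s_{\lambda}, \qquad f^{d}_{\lambda} \geq 0.
\]
Expanding each Schur function in the monomial basis via $s_{\lambda} = \sum_{\mu} K_{\lambda,\mu}\, m_{\mu}$, and comparing with $X_{G(d)} = \sum_\mu c^d_\mu\, m_\mu$, we obtain the key identity
\[
c^d_{\mu} \;=\; \sum_{\lambda} f^{d}_{\lambda}\, K_{\lambda,\mu}.
\]

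The heart of the argument is the claim that $K_{\lambda,\mu}$ is weakly decreasing in $\mu$ under dominance order: if $\mu \succeq \nu$ are partitions of the same integer, then $K_{\lambda,\mu} \leq K_{\lambda,\nu}$ for every $\lambda$. Given this, since each $f^d_\lambda$ is nonnegative, summing termwise yields $c^d_\mu \leq c^d_\nu$, which is exactly what we want.

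To establish the monotonicity of Kostka numbers, I would appeal to the classical fact (going back to White) that for each fixed shape $\lambda$, the function $\mu \mapsto K_{\lambda,\mu}$ is an order-reversing map from partitions of $|\lambda|$ (in dominance order) to $\mathbb{N}$. The standard reduction is to the case where $\mu$ covers $\nu$ in dominance order, i.e.\ $\mu = \nu + e_i - e_j$ with $i < j$ and $\mu$ still a partition, and in this case one constructs an explicit injection from SSYT of shape $\lambda$ and content $\mu$ into SSYT of shape $\lambda$ and content $\nu$ (by a local modification that relabels some entries from $j$ to $i$ in a controlled way, or equivalently via a Bender--Knuth--style involution argument). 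Iterating along a saturated chain from $\mu$ down to $\nu$ in dominance order yields the desired inequality.

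The main obstacle is really just invoking the correct version of the Kostka monotonicity lemma; everything else is bookkeeping. Once that monotonicity is in hand, the proposition follows immediately from Gasharov's theorem and the nonnegativity of $f^d_\lambda$, with no further input from the structure of $G(d)$.
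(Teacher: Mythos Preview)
Your proposal is correct and follows essentially the same argument as the paper's own proof: invoke Gasharov's $s$-positivity to write $c^d_\mu = \sum_\lambda f^d_\lambda K_{\lambda,\mu}$, then apply the classical monotonicity $K_{\lambda,\mu} \le K_{\lambda,\nu}$ for $\mu \succeq \nu$ (the paper cites Macdonald and White for this). The only difference is that you sketch a proof of the Kostka monotonicity via covering relations and a Bender--Knuth--type injection, whereas the paper simply cites it.
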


\begin{proof}
Gasharov proved that $X_{G(P)}({\bf x})$ is $s$-positive \cite{G2}, thus
\[
X_{G(d)} = \sum_{\lambda} f^{d}_{\lambda} s_{\lambda},
\]
where $f_{\lambda}^d \in \mathbb{N}$.  Every Schur function has a monomial expansion of the form $s_{\lambda} = \sum_{\mu} K_{\lambda\mu} m_{\mu}$. In this expansion, if $\mu \succeq \nu$, then we have the inequality $K_{\lambda\mu} \leq K_{\lambda\nu}$ of Kostka numbers (see  \cite[Ex. 9 (b) \SS 1.7]{Mac} or \cite{Whi}). Thus if   $\mu \succeq \nu$ then
\[
c^d_{\mu} = \sum_{\lambda} f^d_{\lambda} K_{\lambda\mu} \leq \sum_{\lambda} f^d_{\lambda} K_{\lambda\nu} = c^d_{\nu},
\]
as desired.
 \end{proof}

\subsection{SNP property of chromatic symmetric functions}\label{snpsection}

We show in Theorem~\ref{thm:p_lambda 3+1 free} that (3+1)-free incomparability graphs have permutahedral support, so they are all M-convex and have the SNP property.

\begin{remark}
This result does not hold for analogous graphs which are not incomparability graphs of posets. If \(G\) is an incomparability graph of a poset \(P\), it is claw-free if and only if \(P\) is (3+1)-free. But there are claw-free graphs for which the chromatic symmetric function does not even have M-convex support (see Example \ref{example:triforce}).
\end{remark}

\begin{example}\label{example:triforce}
Let $G$ be the claw-free graph with six vertices in Figure~\ref{fig:counter1}. Note that (when expanded in \(6\) variables) $X_G = 162s_{1^6} +72 s_{21^4} + 12 s_{2^21^2} + 6 s_{2^3} + 6s_{31^3}$ is SNP; however, it is not M-convex since $(1, 1, 1, 3, 0, 0)$, $(0, 0, 2, 2, 2, 0)$ are both in the support, but
\[
(0,0,2,2,2,0) + e_4 - e_i
\]
is not for any \(i\) in \(\{1,2,3,5\}\).
\end{example}

Conjecture \ref{conj:mon} says that the chromatic symmetric function of any $s$-positive graph should be SNP. Our result that (3+1)-free incomparability graphs have chromatic symmetric functions with M-convex support is a partial confirmation of the conjecture.

However, in order to test the conjecture for other graphs one needs to look at graphs with size $n\geq 12$. The next minimal example shows that there are $s$-positive symmetric functions that are not SNP, but they do not occur for small $n$.\footnote{There are no $3$-antichains of partitions of $n<12$ in dominance order with one partition being a convex combination of the other two.} This makes it hard to find a counterexample for Conjecture~\ref{conj:mon}.

\begin{example}
The function
\[
f = s_{6222} + s_{444}
\]
is not SNP (when expanded in at least 4 variables). The vector \((5,3,3,1,0,\ldots)\) is a convex combination \(\frac{1}{2}(6,2,2,2) + \frac{1}{2}(4,4,4,0)\), but the partition \((5,3,3,1)\) is not dominated by either \((6,2,2,2)\) or \((4,4,4)\), so it is not in the support of \(f\).
\end{example}

\begin{remark}
Furthermore, there are $s$-positive incomparability graphs that contain claws, which are not covered by our Theorem \ref{thm:p_lambda 3+1 free}: see Example \ref{example:6tree}. These can fail to be M-convex (as in the example), and it seems plausible that an $s$-positive incomparability graph with 12 vertices that contains claws could fail to be SNP. We were unable to complete a search over the space of incomparability graphs with 12 vertices due to computational constraints.
\end{remark}

\begin{example}\label{example:6tree}
Let $G$ be the tree with six vertices in Figure~\ref{fig:counter2}, which is an incomparability graph for the poset in Figure~\ref{fig:counter_poset}. Then $X_G = 32s_{1^6} + 40s_{21^4} + 18s_{2^21^2} + 8 s_{2^3} + 16s_{31^3} + 6s_{321} + 2s_{3^2} + 2 s_{41^2}$. This is not M-convex since \((0, 0, 0, 3, 0, 3)\) and \((0, 0, 0, 4, 1, 1)\) are both in the support, but
\[
(0,0,0,3,0,3) + e_4 - e_6
\]
is not. 
\end{example}

\begin{figure}
  \begin{subfigure}[b]{0.3\textwidth}
    \centering
     \includegraphics[scale=0.8]{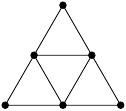}
    \caption{}
\label{fig:counter1}
\end{subfigure}
\begin{subfigure}[b]{0.3\textwidth}
  \centering
  \includegraphics[scale=0.8]{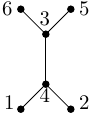}
  \caption{}
  \label{fig:counter2}
\end{subfigure}
\begin{subfigure}[b]{0.3\textwidth}
  \centering
  \includegraphics[scale=0.8]{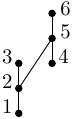}
  \caption{}
  \label{fig:counter_poset}
\end{subfigure}
 \caption{Examples of graphs with $s$-positive chromatic symmetric functions that are not M-convex. The example in (A) is claw-free and chordal but is not an incomparability graph. The example in (B) 
    is a chordal incomparability graph of the poset in (C), but the graph contains a claw.}

    \centering
    
     \quad

    \label{fig:counter}
\end{figure}

\subsection{Complexity of $X_{G(P)}$ and $X_{G(d)}$} \label{complexitysubsection}

The study of the complexity of chromatic symmetric functions of general graphs and claw-free graphs was started by Adve--Robichaux--Yong \cite{ARY}. We give some preliminary results on these questions for graphs $G(d)$ and the more general $G(P)$.  

Given a (3+1)-free poset $P$ (resp. a co-bipartite graph $G$ or a Dyck path $d$) and its chromatic symmetric function $X_{G(P)}=\sum_{\alpha} c^P_{\alpha} {\bf x}^{\alpha}$  (resp. $X_G = \sum_{\alpha} c^G_{\alpha} {\bf x}^{\alpha}$ or $X_{G(d)} = \sum_{\alpha} c^d_{\alpha} {\bf x}^{\alpha}$), it is of interest to study the {\em nonvanishing decision problem}: the complexity of deciding whether $c^P_{\alpha} \neq 0$ (resp. $c^G_{\alpha}\neq 0$ or $c^d_{\alpha}\neq 0$) and the complexity of computing $c^P_{\alpha}$ (resp. $c^G_{\alpha}$ or $c^d_{\alpha}$), both measured in the input size of $P$ (resp. $G$ and $d$).  For the sake of specificity, we assume a Dyck path $d$ of length $n$  is given as a length $2n$ string of $\sfe$ and $\sfn$ steps. A poset $P$ is specified by a list of its cover relations, and a co-bipartite graph is specified by a list of its edges.

\begin{proposition}\label{prop:non vanishing dyck paths}
Let $d$ be a Dyck path of length $n$. Given a weight \(\alpha \in \mathbb{N}^n\), deciding whether \(c^d_{\alpha}\) is nonzero is in \(\mathsf{P}\) (takes time polynomial in \(n\)).
\end{proposition}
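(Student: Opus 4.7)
The plan is to reduce the nonvanishing decision to a polynomial-time check on the Newton polytope, which by Theorem~\ref{thm:p_lambda} has an explicit combinatorial description. Specifically, Theorem~\ref{thm:p_lambda} asserts that $\supp(X_{G(d)}(x_1,\ldots,x_n))$ equals the set of lattice points of the permutahedron $\mathcal{P}^{(n)}_{\lambda^{\gr}(d)}$. So $c^d_\alpha \neq 0$ if and only if $\alpha$ is a lattice point of $\mathcal{P}^{(n)}_{\lambda^{\gr}(d)}$.

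First, I would compute $\lambda^{\gr}(d)$ directly from the bounce path coloring algorithm described in Section~\ref{sec: case dyck paths}. Starting from the Hessenberg function $h_d$, which can be read off from $d$ in linear time, each color class is produced by a single left-to-right scan following the rule ``jump past $h_d(j)$'' described in the bounce path coloring. Each scan runs in time $O(n)$, and since at most $n$ colors are used, the entire computation of $\lambda^{\gr}(d)$ takes time $O(n^2)$.

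Second, I would test membership of $\alpha$ in $\mathcal{P}^{(n)}_{\lambda^{\gr}(d)} \cap \mathbb{Z}^n$ using the classical Rado characterization: a nonnegative integer vector $\alpha$ lies in the permutahedron $\mathcal{P}^{(n)}_\lambda$ if and only if $\sum_i \alpha_i = \sum_i \lambda_i$ and the weakly decreasing rearrangement $\alpha^\downarrow$ satisfies $\alpha^\downarrow \preceq \lambda$ in dominance order. This requires sorting $\alpha$ in time $O(n \log n)$, computing partial sums of $\alpha^\downarrow$ and $\lambda^{\gr}(d)$ in time $O(n)$, and comparing them in time $O(n)$.

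The main (and essentially only) nontrivial ingredient is Theorem~\ref{thm:p_lambda} itself, which converts the question into a membership test in an explicit permutahedron; the rest is a direct verification that each step of the algorithm runs in polynomial time. No step is a real obstacle — the result is a clean corollary of Theorem~\ref{thm:p_lambda} combined with Rado's characterization of lattice points in a permutahedron.
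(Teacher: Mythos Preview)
Your proposal is correct and follows essentially the same approach as the paper: invoke Theorem~\ref{thm:p_lambda} to reduce nonvanishing to membership in $\mathcal{P}^{(n)}_{\lambda^{\gr}(d)}$, compute $\lambda^{\gr}(d)$ via the greedy/bounce path coloring in polynomial time, and then test membership via Rado's dominance criterion. Your write-up actually gives sharper running-time bounds than the paper's own proof.
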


\begin{proof}
By Theorem \ref{thm:p_lambda}, the support of \(X_{G(d)}(x_1,\ldots,x_k)\) is the permutahedron \(\mathcal{P}^{(k)}_{\lambda^{\gr}(d)}\). The greedy algorithm to determine \(\lambda^{\gr}(d)\) from \(d\) takes time polynomial in \(n\):
for each number \(i\) in $[n]$, consider vertex \(i\). For each other vertex \(j\), check if \(j\) is connected and add it to the list of neighbors of \(i\) if so. Consider each color \(x\) in order, and if \(x\) is not a color of a neighbor of \(i\), color the vertex \(i\) the color \(x\) and move on to the next vertex.
(It suffices to consider each pair of vertices only once.) Once \(\lambda^{\gr}(d)\) is determined, determining membership of \(\alpha\) in the permutahedron takes polynomial time as well by Rado's theorem \cite{Rado}.
\end{proof}

\begin{proposition} \label{prop:nonvanishing 3+1 free}
Let $P$ be a (3+1)-free poset on $n$ vertices. Given a weight \(\alpha \in \mathbb{N}^n\), deciding whether \(c^P_{\alpha}\) is nonzero is in \(\mathsf{P}\) (takes time polynomial in \(n\)).
\end{proposition}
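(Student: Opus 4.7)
By Theorem~\ref{thm:p_lambda 3+1 free}, we have $c^P_\alpha \neq 0$ if and only if $\alpha$ lies in the permutahedron $\mathcal{P}^{(n)}_{\lambda^{\gr}(P)}$. By Rado's theorem, membership is equivalent to the weakly decreasing rearrangement of $\alpha$ being dominated by $\lambda^{\gr}(P)$, which can be checked in $O(n\log n)$ time by sorting and comparing partial sums. Thus the task reduces to computing $\lambda^{\gr}(P)$ in time polynomial in $n$.

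The plan is to follow the defining construction of $\lambda^{\gr}(P)$ step by step, checking polynomiality at each step. First, construct a part listing $L$ for $P$: assign each vertex the level equal to the length of the longest chain ending at it (polynomial in $n$), sequence vertices within the listing consistently with rules (ii)--(iii), and record the bipartite incomparability relations between consecutive levels as bicolored parts $b_{i,i+1}(H)$. The validity of this construction for a (3+1)-free poset is guaranteed by \cite[Propositions 2.4, 2.5]{MGP}. Second, for each bicolored graph $b_{i,i+1}(H)$ with $r$ lower and $s$ upper vertices in $L$, compute $j^{*} \coloneq \max\{j : q_j \neq 0\}$. Since $q_j$ is the probability that a uniformly random matching of size $\min(r,s)$ in $K_{r,s}$ shares exactly $j$ edges with $H$, the value $j^{*}$ equals the maximum matching number of $H$ viewed as a bipartite graph (any matching of $H$ extends to a matching of size $\min(r,s)$ in $K_{r,s}$). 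This is polynomial time by Hopcroft--Karp. Replace each $b_{i,i+1}(H)$ in $L$ by $U^{(i)}_{j^{*}}$ if $r \geq s$ or by $D^{(i)}_{j^{*}}$ if $r < s$.

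The resulting vertex-only part listing encodes a unit interval order. Transform it into its canonical lex-maximal representative $v_{a_1}\cdots v_{a_n}$ satisfying $a_1=0$ and $a_{i+1}\leq a_i+1$ (Theorem~\ref{thm: char unit interval orders}) using the explicit well-chosen ordering on $\Sigma^{*}$ of \cite[\S 6]{gelinas2022proof}, which runs in polynomial time. By Remark~\ref{def: varphi area sequence}, $(a_1,\ldots,a_n)$ is the area sequence of a Dyck path $d$; then the polynomial-time greedy algorithm from the proof of Proposition~\ref{prop:non vanishing dyck paths} computes $\lambda^{\gr}(d)=\lambda^{\gr}(P)$.

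The main obstacle will be justifying polynomial time bounds for two passages to canonical form: the extraction of a valid part listing from the abstract poset $P$, and the conversion of the resulting vertex-only listing into its lex-maximal representative, since the structural results in \cite{GPMR, MGP} are stated using trace monoids and do not come with explicit complexity bounds. The cleanest resolution is to invoke the direct procedure of \cite{gelinas2022proof}, which circumvents trace monoids and yields a polynomial bound; alternatively, one can bound the number of commutation/circulation moves needed by a direct structural induction on the number of bicolored parts.
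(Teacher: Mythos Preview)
Your proposal follows the same overall strategy as the paper: reduce to computing $\lambda^{\gr}(P)$ in polynomial time, build a part listing, replace each bicolored part $b_{i,i+1}(H)$ by $U_{j^*}$ or $D_{j^*}$ where $j^*$ is the maximum matching size of $H$, and then apply the greedy algorithm as in Proposition~\ref{prop:non vanishing dyck paths}. Your identification of $j^*$ with the maximum matching number is exactly the paper's key observation.

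There is, however, a genuine slip in your final step. After obtaining the canonical listing $v_{a_1}\cdots v_{a_n}$, you interpret $(a_1,\ldots,a_n)$ as the area sequence of a Dyck path $d$ and compute $\lambda^{\gr}(d)$. But as Section~\ref{sec: listing to Dyck paths} makes explicit, this path is the path called $d'$ there, \emph{not} the path whose indifference graph is the incomparability graph of the unit interval order; the two are related (conjecturally) by the $\zeta$ map, and there is no reason that $\lambda^{\gr}(d') = \lambda^{\gr}(\zeta(d'))$ in general. The definition of $\lambda^{\gr}(P)$ uses the greedy coloring on the incomparability graph $G(d)$, not on $G(d')$.

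The fix is simple and, pleasantly, dissolves the obstacle you flagged about canonical representatives. Once you have the vertex-only part listing $L'$, you can build the associated poset directly from rules (i)--(iii) of Section~\ref{subsec: structure 3+1 free} and hence its incomparability graph $G$, all in polynomial time. Since this poset is a unit interval order, $G$ is an indifference graph; a vertex ordering (equivalently a Hessenberg function) realizing it can be found in polynomial time by standard unit-interval-graph recognition. Then run the greedy algorithm on $G$ with that ordering to obtain $\lambda^{\gr}(P)$. This is what the paper does implicitly when it says ``the result then follows by the same argument as in the proof of Proposition~\ref{prop:non vanishing dyck paths}'', and it bypasses both the lex-maximal normalization and the area-sequence detour entirely.
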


\begin{proof}
Recall that $P$ is specified as a list of cover relations. Following the decomposition in \cite{GPMR,MGP}, we can convert $P$ into a part listing $L$ in polynomial time (where the bicolored graphs in $L$ are encoded as adjacency matrices). 

Following our proof of Theorem~\ref{thm:p_lambda 3+1 free}, we find the dominating weight $\lambda^{\gr}(P)$ by finding for each bicolored graph $H$ the maximum $U_k$ (or $D_k$) appearing in its convex decomposition. Such $k$ is the size of the maximum matching in $H$, which we can find in polynomial time (see \cite[Section 16.4]{ShrijverA}). The result then follows by the same argument as in the proof of Proposition~\ref{prop:non vanishing dyck paths}.
\end{proof}

Next we use Lemma~\ref{lem:StanleyStembridgeCobipartite} to determine the complexity of computing the coefficients of $X_G$ in the monomial basis for co-bipartite graphs. 

\begin{proposition} \label{prop:complexity-cobipartite}
If \(G\) is a co-bipartite graph, then determining the coefficients \(c^G_{\alpha}\) is \(\#\mathsf{P}\)-complete.
\end{proposition}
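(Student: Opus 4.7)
\medskip

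\noindent\textbf{Proof plan.} The plan is to exploit Lemma~\ref{lem:StanleyStembridgeCobipartite}, which gives the explicit monomial expansion
\[
X_G = \sum_i i!\cdot(n_1+n_2-2i)!\cdot r_i(B)\cdot m_{2^i 1^{n_1+n_2-2i}},
\]
where $B\subset[n_1]\times[n_2]$ is the board whose cells record the non-edges between the two cliques of $G$. Consequently, every nonzero coefficient of $X_G$ in the monomial basis has the form $c^G_{2^i1^{n-2i}} = i!\,(n-2i)!\,r_i(B)$, so computing coefficients of chromatic symmetric functions of co-bipartite graphs is polynomial-time equivalent to computing rook numbers of arbitrary $0/1$ boards. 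Via this dictionary, the proof reduces to an exercise in classical complexity.

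For membership in $\#\mathsf{P}$, observe that $c^G_\alpha$ simply counts proper colorings of $G$ of weight $\alpha$ (a property verifiable in polynomial time from the coloring), so the coefficients are in $\#\mathsf{P}$. Equivalently, the rook numbers count non-attacking rook placements on $B$, which is a $\#\mathsf{P}$ enumeration.

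For $\#\mathsf{P}$-hardness, reduce from computing the permanent of a $0/1$ matrix, which is $\#\mathsf{P}$-complete by Valiant's theorem. Given an $n\times n$ matrix $A\in\{0,1\}^{n\times n}$, I would build the co-bipartite graph $G_A$ on vertex set $\{1,\ldots,n\}\sqcup\{n+1,\ldots,2n\}$ consisting of the two cliques on each part, together with an edge $(i,n+j)$ precisely when $A_{ij}=0$. The associated board is then $B_A = \{(i,j) : A_{ij}=1\}$, and a maximum placement of non-attacking rooks on $B_A$ is nothing but a system of distinct representatives of the $1$-entries of $A$, so $r_n(B_A) = \operatorname{perm}(A)$. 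Applying Lemma~\ref{lem:StanleyStembridgeCobipartite} at $i=n$ yields
\[
c^{G_A}_{2^n} \;=\; n!\cdot 0!\cdot r_n(B_A) \;=\; n!\cdot\operatorname{perm}(A),
\]
so one oracle call to the coefficient $c^{G_A}_{2^n}$ followed by a single division recovers $\operatorname{perm}(A)$. This reduction runs in time polynomial in $n$, finishing the argument.

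The proof is essentially transparent once Lemma~\ref{lem:StanleyStembridgeCobipartite} is in hand, and there is no genuine obstacle: the only thing to be careful about is the reduction direction (we reduce the permanent \emph{to} coefficient computation, not the other way), and that we invoke Valiant's theorem for $0/1$ matrices rather than for matrices with large entries. One could equivalently phrase the reduction as starting from a bipartite graph $H$ and setting $G = \overline{H}$, in which case $r_n(B)$ counts perfect matchings of $H$.
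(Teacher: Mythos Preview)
Your proof is correct and follows essentially the same approach as the paper: both reduce from the $0/1$ permanent via Valiant's theorem by constructing the co-bipartite graph whose associated board is the support of $A$, then read off $\operatorname{perm}(A)$ from the coefficient $c^{G}_{2^n}=n!\cdot r_n(B)$ using Lemma~\ref{lem:StanleyStembridgeCobipartite}. The only difference is that you also spell out the (easy) $\#\mathsf{P}$ membership argument, which the paper leaves implicit.
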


\begin{proof}
Computing the permanent of a $0$-$1$ matrix $A$ of size $n\times n$ is $\#\mathsf{P}$-complete \cite{Valiant}. If $B \subset [n]\times [n]$ is the support of the matrix $A$, then $\perm(A)=r_n(B)$. Given the board $B$, let $G$ be the co-bipartite graph with two cliques on vertices $\{1,\ldots,n\} \cup \{n+1,\ldots, 2n\}$ and edges $(i,n+j)$ for each $(i,j)$ not in $B$. Then by \eqref{eq:rel_rooks} we have that $c^G_{2^n}= n! \cdot r_n(B)=n! \cdot \perm(A)$. Hence, determining the coefficients $c^G_{\alpha}$ of $X_G$ is \(\#\mathsf{P}\)-complete as desired.
\end{proof}

Since co-bipartite graphs are incomparability graphs of (3+1)-free posets, we immediately obtain the following.

\begin{corollary}\label{cor:3+1 coeffs}
If $P$ is a (3+1)-free poset, then determining the coefficients \(c^P_{\alpha}\) is \(\#\mathsf{P}\)-complete.
\end{corollary}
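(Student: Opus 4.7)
The plan is to observe that this corollary is essentially immediate from Proposition~\ref{prop:complexity-cobipartite} together with the class inclusion noted earlier in the paper: co-bipartite graphs are precisely the incomparability graphs of 3-free posets, and every 3-free poset is (3+1)-free (vacuously, since it contains no 3-chain at all). Thus 3-free posets form a subfamily of (3+1)-free posets, and $\#\mathsf{P}$-hardness for the larger class follows from $\#\mathsf{P}$-hardness for the subfamily.

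More concretely, I would proceed in two steps. First, to see that computing $c_\alpha^P$ lies in $\#\mathsf{P}$, note that $c_\alpha^P$ equals the number of proper colorings $f\colon V(G(P)) \to \mathbb{N}$ with $\wt(f) = \alpha$, which one can also realize as the number of proper colorings $f\colon V(G(P)) \to [k]$ with $\wt(f)=\alpha$ for $k$ equal to the number of nonzero entries of $\alpha$. Given the poset $P$ (presented by its cover relations) one can build $G(P)$ in polynomial time, and verifying that a candidate function $f$ is a proper coloring with a prescribed weight is also polynomial, placing the counting problem in $\#\mathsf{P}$.

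Second, for $\#\mathsf{P}$-hardness, I would reduce from the co-bipartite case. Given a co-bipartite graph $G$ on $n$ vertices and a weight $\alpha$, let $P$ be the 3-free poset whose incomparability graph is $G$ (two antichains with comparabilities given by the non-edges of $G$ between the two cliques); this $P$ is built in polynomial time from $G$. Since $G(P) = G$, we have $c_\alpha^P = c_\alpha^G$, so any algorithm computing coefficients of $X_{G(P)}$ for (3+1)-free posets also computes coefficients of $X_G$ for co-bipartite graphs. By Proposition~\ref{prop:complexity-cobipartite}, the latter is $\#\mathsf{P}$-hard, and hence so is the former.

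There is essentially no obstacle here beyond carefully invoking the preceding proposition; the only subtle point is the standard one of confirming membership in $\#\mathsf{P}$, which is routine because proper colorings of fixed weight are witnesses that can be checked in polynomial time. The rest is a one-line reduction exploiting the inclusion of 3-free posets inside (3+1)-free posets.
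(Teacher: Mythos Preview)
Your proposal is correct and follows essentially the same route as the paper's proof: reduce from Proposition~\ref{prop:complexity-cobipartite} via the observation that co-bipartite graphs are exactly the incomparability graphs of 3-free posets, which are in particular (3+1)-free. You are in fact slightly more careful than the paper, since you explicitly verify membership in $\#\mathsf{P}$ (a routine witness-checking argument), whereas the paper's one-line proof only addresses the hardness direction.
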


\begin{proof}
The result follows from Proposition~\ref{prop:complexity-cobipartite} and the fact that co-bipartite graphs are incomparability graphs of 3-free posets.
\end{proof}

\begin{remark}
Given a Dyck path $d$, it would be interesting to see whether or not determining the coefficients \(c^d_{\alpha}\) of $X_{G(d)}$  is \(\#\mathsf{P}\)-complete. More concretely, is determining the leading coefficient \(c^d_{\lambda^{\gr}(d)}\) for the greedy coloring \(\#\mathsf{P}\)-complete?
\end{remark}

\begin{remark}
In contrast, one can compute \(c^d_{2^k1^{n-2k}}\) in polynomial time for abelian Dyck paths $d$ (i.e. Dyck paths with indifference graphs $G(d)$ that are also co-bipartite), which are encoded by Ferrers boards $B_{\mu}$ of partitions $\mu=(\mu_1,\ldots,\mu_{\ell})$. Then by classical rook theory \cite{kaplansky1946problem}, $\sum_k r_k(B_{\mu}) (x)_{r-k} = \prod_i (x+\mu_{\ell-i+1}-i-1)$, where $(x)_m = x(x-1)\cdots (x-m+1)$. The coefficients $r_k(B_{\mu})$ can be extracted using the {\em Stirling numbers} of the second kind $S(m,k)$, since $x^m=\sum_{k=0}^m S(m,k) (x)_k$. The numbers $S(m,k)$ can in turn be computed efficiently, say by using their recurrence (e.g. see \cite[Eq. 1.93, 1.96]{EC1}). 
\end{remark}

\begin{remark}
We know of two recent algorithms to compute $X_{G(d)}$, and it would be interesting to analyze their complexity.
\begin{itemize}
\item  Carlsson and Mellit \cite[Section 4]{CM}
defined chromatic symmetric functions of partial Dyck paths and
defined a {\em Dyck path algebra} generated by operators $D_{\sfn},
D_{\sfe}$ that act on these symmetric functions by adding north
steps $\sfn$ and east steps $\sfe$ to the Dyck path. These operators use {\em
  plethystic operations} (e.g. see \cite[Chapter 1]{HagqtCat}). If the Dyck path $d$ has steps  $\epsilon_1
\cdots \epsilon_{2n}$, then \cite[Theorem 4.4]{CM} implies that
\[
X_{G(d)} \,=\, D_{\epsilon_1}\cdots D_{\epsilon_{2n}} (1).
\]
\item Abreu and Nigro \cite[Algorithm 2.8]{AN} gave a recursive algorithm, based on the modular relation, to compute $X_{G(d)}$. 
\end{itemize}
\end{remark}

\subsection*{Acknowledgements}
This project started during a stay of the second named author at Institut Mittag-Leffler in
Djursholm, Sweden in the course of the program on Algebraic and Enumerative Combinatorics in Spring 2020. We thankfully acknowledge the support of the Swedish Research Council under grant no. 2016-06596, and thank Institut Mittag-Leffler for its hospitality. We thank Margaret Bayer, Petter Br\"and\'en, Tim Chow, Laura Colmenarejo, Benedek Dombos, Félix G\'elinas, Mathieu Guay-Paquet, Álvaro Gutiérrez, Jim Haglund, Chris Hanusa, June Huh, 
Ricky Liu, Khanh Nguyen Duc, Greta Panova, Adrien Segovia, Mark Skandera, Eric Sommers, Hugh Thomas, Cynthia Vinzant, and Andrew Tymothy Wilson for helpful comments. We also would like to thank the anonymous referee for edits and suggestions that improved the manuscript. The first named author was partially supported by Max Planck Institute for Mathematics (MPIM), the Hausdorff Research Institute for Mathematics (HIM), the Deutsche Forschungsgemeinschaft (DFG) under Germany's Excellence Strategy - GZ 2047/1 Projekt-ID 390685813, and a Simons Foundation Travel Support for Mathematicians Award MPS-TSM-00007970.  The second and third named authors were partially supported by NSF grants DMS-1855536 and DMS-2154019.

\appendix
\section{Counterexample to Conjecture~\ref{conj:dyckLor}} \label{appendix:counterex}

After the paper was published, Ricky Liu and Cynthia Vinzant (private communication) found the following counterexample to Conjecture~\ref{conj:dyckLor}.

\begin{example}
 For $n=8$ and the Dyck path $d=\sfn\sfn\sfe\sfn\sfn\sfe\sfe\sfn\sfe\sfn\sfn\sfe\sfe\sfn\sfe\sfe$ in Figure~\ref{fig:counterexLorzCSF}, we have that 
 \begin{multline*}
X_{G(d)} = 40320m_{1^8} + 13680m_{21^6}  + 4656m_{2^21^4} + 1620m_{2^31^2} +  600m_{2^4} + 1680 m_{31^5} + 552 m_{321^3} + 196m_{32^21} \\+ 48m_{3^21^2}  + 22m_{3^22} + 72m_{41^4} + 22m_{421^2} + 10m_{42^2}.
 \end{multline*}
 When expanded in $k=4$ variables and 
for the partition $\alpha=(1,1,2)$, the associated quadratic form has matrix
\[
H_{\alpha} = 
\begin{pmatrix}
   0  &    0  &  960 &  1056\\
   0 &     0 &  1584 &  1728\\
 960 &  1584 &  1584 &  4704\\
1056 &  1728 &  4704 & 4704
\end{pmatrix},
\]
which has two positive eigenvalues. The same partition $\alpha$ also works to show that the normalization of $X_{G(d)}(x_1,x_2,x_3,x_4)$ is not  Lorentzian.
\end{example}

\begin{figure}
\centering
    \includegraphics[]{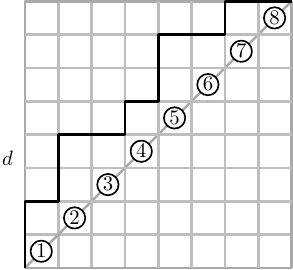}
    \caption{Counterexample to Conjecture~\ref{conj:dyckLor} for $n=8$ and $k=4$.}
    \label{fig:counterexLorzCSF}
\end{figure}

\bibliography{references}
\bibliographystyle{plain}
\end{document}